\newtheorem{rem}{Remark}[section]
\newtheorem{thm}{Theorem}[section]
\newtheorem{lem}{Lemma}[section]
\newcommand{\nrm}[1]{\left\| #1 \right\|}
\newcommand{\ciptwo}[2]{\left\langle #1 , #2 \right\rangle_\Omega}
\newcommand\dt {\Delta t}
\newcommand{\eipvec}[2]{\left[ #1 , #2 \right]_{\Omega}}
	\newcommand\be {\begin{equation}}
	\newcommand\ee {\end{equation}}
	\newcommand\bx {{\bf x}}
	\title{Structure-preserving, energy stable numerical schemes for a liquid thin film coarsening model}
	\date{\today}
\begin{document}
	
	\author{
Juan Zhang \thanks{School of Mathematical Sciences, Beijing Normal University, Beijing 100875, P.R. China (201831130028@mail.bun.edu.cn)}
\and	
Cheng Wang\thanks{Department of Mathematics, The University of Massachusetts, North Dartmouth, MA  02747, USA (Corresponding Author: cwang1@umassd.edu)}
	\and
Steven M. Wise\thanks{Department of Mathematics, The University of Tennessee, Knoxville, TN 37996, USA (swise1@utk.edu)}
\and	
Zhengru Zhang\thanks{Laboratory of Mathematics and Complex Systems, Beijing Normal University, Beijing 100875, P.R. China (zrzhang@bnu.edu.cn)}
}


 	\maketitle
	\numberwithin{equation}{section}
	
\begin{abstract}	
In this paper, two finite difference numerical schemes are proposed and analyzed for the droplet liquid film model, with a singular Leonard-Jones energy potential involved. Both first and second order accurate temporal algorithms are considered. In the first order scheme, the convex potential and the surface diffusion terms are implicitly, while the concave potential term is updated explicitly. Furthermore, we provide a theoretical justification that this numerical algorithm has a unique solution, such that the positivity is always preserved for the phase variable at a point-wise level, so that a singularity is avoided in the scheme. In fact, the singular nature of the Leonard-Jones potential term around the value of 0 prevents the numerical solution reaching such singular value, so that the positivity structure is always preserved. Moreover, an unconditional energy stability of the numerical scheme is derived, without any restriction for the time step size. In the second order numerical scheme, the BDF temporal stencil is applied, and an alternate convex-concave decomposition is derived, so that the concave part corresponds to a quadratic energy. In turn, the combined Leonard-Jones potential term is treated implicitly, and the concave part the is approximated by a second order Adams-Bashforth explicit extrapolation, and an artificial Douglas-Dupont regularization term is added to ensure the energy stability. The unique solvability and the positivity-preserving property for the second order scheme could be similarly established.  In addition, optimal rate convergence analysis is provided for both the first and second order accurate schemes. A few numerical simulation results are also presented, 
which demonstrate the robustness of the numerical schemes.

	\bigskip

\noindent
{\bf Key words and phrases}:
droplet liquid film model; unique solvability; positivity-preserving; energy stability; optimal rate convergence analysis


\noindent
{\bf AMS subject classification}: \, 35K35, 35K55, 49J40, 65M06, 65M12	
\end{abstract}
	

\section{Introduction}

Certain liquids on a solid, chemo-attractive substrate spontaneously form a droplet structure connected by a very thin precursor (or wetting) layer. After the droplets appear, coarsening will occur, whereby smaller droplets will shrink and larger droplets will grow. The coarsening behavior, especially the rate of coarsening, of droplets has been of great scientific interest. The average droplet size increases with the decrease of the number of droplets and the increase of the characteristic distance. The coarsening process can be mediated by two mechanisms: collapse or collision. Collapse occurs through mass exchange between droplets in the precursor/wetting layer, and the droplets can also drift and collide with each other like particles in the traditional Ostwald ripening process. Thin layers of viscous liquid are well described by the lubrication approximation, which capitalizes on the separation of horizontal and vertical length scales. Under the assumption that the liquid film does not evaporate, lubrication theory leads to a single equation for the height function, $\phi=\phi(x, t)>0$, of a time-dependent film~\cite{Oron97}, in the form of an $H^{-1}$ gradient flow:
	\begin{equation*}
\partial_{t}\phi = \nabla\cdot\left(\mathcal{M}(\phi)\nabla\delta_\phi F \right) .
	\end{equation*}
Here $F$ is the free energy of the film/substrate system and is given by 	
	\[
F(\phi)=\int_{\Omega}  \left( \frac12 | \nabla\phi |^2 +\mathcal{U}(\phi)\right) d \bx,
	\]
and the term $\delta_\phi F$ is the variational derivative of $F$. In more detail, the gradient term describes the linearized contribution of the liquid-air surface tension, while $\mathcal{U}(\phi)$ models the intermolecular forces between the substrate and the film. See the related research on the coarsening dynamics~\cite{Alexander1999, Alexander1997, Anna1997, Becker2003, Bertozzi2001, Glasner2008, Glasner2003, Kim2006, kohn02, Kohn2004, Limary2002, Limary2003, otto06, Pbgo1989, Pismen2004, Ravati2013, Thiele2001}.

In this paper, we consider a well-known Lennard-Jones-type potential~\cite{Israelachvili1992}, which is expressed as $\mathcal{U}(\phi)=\frac{1}{3}\phi^{-8}-\frac{4}{3}\phi^{-2}$. Other potentials are also physically relevant~\cite{Pismen2004}. The free energy is
	\begin{equation}
F(\phi)=\int_{\Omega} \left( \frac{1}{3} \phi^{-8} - \frac{4}{3}\phi^{-2} + \frac{\varepsilon^{2}}{2} | \nabla\phi |^2 \right) d \bx,
	\label{energy-AG-1}
	\end{equation}
where $\Omega\subset \mathbb{R}^3$ is a bounded domain. The  $H^{-1}$ gradient flow associated with the given free energy functional~\eqref{energy-AG-1} with constant mobility $\mathcal{M} \equiv1$ (non-constant mobility case could be handled in a similar way) is
	\begin{equation}
\partial_t \phi = \Delta \mu ,  \quad
\mu: = \delta_\phi F = - \frac83 ( \phi^{-9} - \phi^{-3} ) - \varepsilon^2 \Delta \phi .
	\label{equation-AG}
	\end{equation}
Mass is conserved, and, due to the gradient structure, the following energy dissipation law is formally available:
	\begin{equation*}
\frac{d}{dt}F(\phi(t))= - \int_{\Omega} \, |\nabla\mu|^2 \, d \bx \le 0 .
	\end{equation*}
For simplicity of presentation, we assume periodic boundary conditions hold over the rectangular domain $\Omega$. Other types of boundary conditions, such as homogeneous Neumann, can also be handled.

Some numerical simulation results have been reported for the droplet liquid film model~\cite{Glasner2008, Oron97}, while the numerical analysis for this model has been very limited. The highly nonlinear and highly singular nature of the Leonard-Jones potential makes the equation very challenging, at both the analytic and numerical levels. In particular, the structure of the potential function requires that the phase variable has to keep a fixed sign, i.e., being, either positive or negative, to avoid a singularity. A positivity-preserving structure is required for the numerical scheme and for physical reality. The theoretical justification of such a property, in addition to the energy dissipation property, turns out to be a major difficulty for the numerical analysis. Moreover, a convergence analysis for any numerical scheme for the liquid film equation~\eqref{equation-AG} has also been a long-standing open problem.

 In this paper we construct and analyze finite difference numerical schemes that preserve all these three important theoretical features, in both the first and second order temporal accuracy orders. In the first order scheme, we make use of the convex-concave decomposition of the physical energy, and come up with a semi-implicit numerical method. The positivity-preserving property will be theoretically established, which is based on the subtle fact that the numerical solution is equivalent to a minimization of a convex energy functional, and the singular nature of the $\phi^{-8}$ term around the value of $0$ prevents the numerical solution reaching the singular value. As a result, the positivity structure is always preserved, and the unique solvability analysis comes from the convex nature of the implicit parts in the scheme. The energy stability becomes a direct consequence of this convexity analysis. In the second order numerical scheme, we have to make use of an alternate convex-concave decomposition of the physical energy, by observing that $\frac{1}{3} \phi^{-8}-\frac{4}{3} \phi^{-2}+\frac{4}{3}A_{0} \phi^2$ is a convex function, provided $A_0$ is sufficiently large. In turn, the concave part becomes a quadratic energy, which is approximated by a second order Adams-Bashforth explicit extrapolation. The second order BDF temporal stencil is applied, and an artificial Douglas-Dupont regularization term is added to ensure energy stability.

In addition, a detailed convergence analysis of the proposed numerical schemes, including the first and second order accurate ones, will be derived in this paper. This analysis gives an optimal rate error estimate in the $\ell^\infty (0, T; H_h^{-1}) \cap \ell^2 (0, T; H_h^1)$. A key point in the analysis lies in the following fact: since the nonlinear potential term corresponds to a convex energy, the corresponding nonlinear error inner product is always non-negative. Furthermore, the error estimate associated with the surface diffusion term indicates an $\ell^2 (0,T; H_h^1)$ convergence. It will be the first such work for this model.

The rest of the paper is organized as follows. In Section~\ref{sec:finite-difference}, we describe the finite difference discretization of space and recall some basic facts. Section~\ref{sec:1st scheme}, we propose the first order numerical scheme and state the main theoretical results, including positivity-preserving, unique solvability, and unconditional energy stability of the scheme. Further, we  provide a detailed convergence analysis. The second order accurate scheme is proposed in  Section~\ref{sec:2nd scheme}; the unique solvability, the positivity-preserving property and an optimal rate convergence analysis are established as well, using similar ideas. Some numerical simulation results are presented in Section~\ref{sec:numerical results}. Finally, the concluding remarks are given in Section~\ref{sec:conclusion}.

	\section{Finite difference spatial discretization}
	\label{sec:finite-difference}

The model described above is inherently two-dimensional in nature. However, other models of this type may be three-dimensional. Our analyses to follow, therefore, will generically cover one through three space dimensions. For spatial approximation we use a staggered-grid finite difference approach. We use the notation and results for some discrete functions and operators from~\cite{guo16, wise10, wise09a}, \emph{et cetera}. Let $\Omega=(0,L_{x})\times(0,L_{y})\times(0,L_{z}),$ where for simplicity, we assume $L_{x}=L_{y}=L_{z}=:L>0$. Let $N\in\mathbb{N}$ be given, and define the grid spacing $h:=\frac{L}{N}$, i.e., a uniform mesh assumption is made  for simplicity of notation. We define the following two uniform, infinite grids with spacing $h>0$: $C:=\{p_{i} \, | \, i\in\mathbb{Z}\}$ and $E:=\{p_{i+1/2} \, | \, i\in\mathbb{Z}\}$, where $p_{i}=p(i):=(i-1/2)  h.$ Consider the following 3D discrete periodic function spaces:
	\begin{equation*}
	\begin{split}
\mathcal{C}_{\rm per} :=\left\{\upsilon:C\times C\times C\rightarrow\mathbb{R} \, \middle| \, \upsilon_{i,j,k}=\upsilon_{i+\alpha N,j+\beta N,k+\gamma N}, \ \forall \, i, j, k, \alpha, \beta, \gamma\in \mathbb{Z}, \right\},
	\\
\mathcal{E}_{\rm per}^{x}:= \left\{\upsilon:E\times C\times C\rightarrow\mathbb{R} \, \middle| \, \upsilon_{i+\frac{1}{2},j,k}=\upsilon_{i+\frac{1}{2}+\alpha N,j+\beta N,k+\gamma N}, \ \forall \, i, j, k, \alpha, \beta, \gamma\in \mathbb{Z} \right\} ,
	\end{split}
	\end{equation*}
with the identification $\upsilon_{i,j,k}=\upsilon(p_{i},p_{j},p_{k}),$ \emph{et cetera}. The spaces $\mathcal{E}_{\rm per}^{y}$ and $\mathcal{E}_{\rm per}^{z}$ are analogously defined. The functions of $\mathcal{C}_{\rm per}$ are called cell-centered functions, and the functions of $\mathcal{E}_{\rm per}^{x},$ $\mathcal{E}_{\rm per}^{y}$ and $\mathcal{E}_{\rm per}^{z}$ called face-centered functions. We also introduce the following mean zero space to facilitate the later analysis: $\mathring{\mathcal{C}}_{\rm per} :=\{\upsilon\in\mathcal{C}_{\rm per} \, | \, 0=\overline{\upsilon}:=\frac{h^{3}}{| \Omega |}\sum_{i,j,k=1}^{N}\upsilon_{i,j,k}\}$.
We define $\vec{\mathcal{E}}_{\rm per} :=\mathcal{E}_{\rm per}^{x}\times\mathcal{E}_{\rm per}^{y}\times\mathcal{E}_{\rm per}^{z}.$

The center-to-face difference and averaging operators, $A_{x},\;D_{x}:\mathcal{C}_{\rm per}\rightarrow\mathcal{E}_{\rm per}^{x}$, $A_{y},\;D_{y}:\mathcal{C}_{\rm per}\rightarrow\mathcal{E}_{\rm per}^{y}$, and $A_{z},\;D_{z}:\mathcal{C}_{\rm per}\rightarrow\mathcal{E}_{\rm per}^{z}$, are defined as follows:
	\begin{equation*}
	\begin{split}
A_{x}\upsilon_{i+\frac{1}{2},j,k}:=\frac{1}{2}(\upsilon_{i+1,j,k}+\upsilon_{i,j,k}),\;\;\;\;
D_{x}\upsilon_{i+\frac{1}{2},j,k}:=\frac{1}{h}(\upsilon_{i+1,j,k}-\upsilon_{i,j,k}),\\
A_{y}\upsilon_{i,j+\frac{1}{2},k}:=\frac{1}{2}(\upsilon_{i,j+1,k}+\upsilon_{i,j,k}),\;\;\;\;
D_{y}\upsilon_{i,j+\frac{1}{2},k}:=\frac{1}{h}(\upsilon_{i,j+1,k}-\upsilon_{i,j,k}),\\
A_{z}\upsilon_{i,j,k+\frac{1}{2}}:=\frac{1}{2}(\upsilon_{i,j,k+1}+\upsilon_{i,j,k}),\;\;\;\;
D_{z}\upsilon_{i,j,k+\frac{1}{2}}:=\frac{1}{h}(\upsilon_{i,j,k+1}-\upsilon_{i,j,k}).
	\end{split}
	\end{equation*}
Likewise, the face-to-center difference and averaging operators, $a_{x},\;d_{x}:\mathcal{E}_{\rm per}^{x}\rightarrow\mathcal{C}_{\rm per}$, $a_{y},\;d_{y}:\mathcal{E}_{\rm per}^{y}\rightarrow\mathcal{C}_{\rm per}$, and $a_{z},\;d_{z}:\mathcal{E}_{\rm per}^{z}\rightarrow\mathcal{C}_{\rm per}$, are defined as follows:
	\begin{equation*}
	\begin{split}
a_{x}\upsilon_{i,j,k}:=\frac{1}{2}(\upsilon_{i+\frac{1}{2},j,k}+\upsilon_{i-\frac{1}{2},j,k}),\quad
d_{x}\upsilon_{i,j,k}:=\frac{1}{h}(\upsilon_{i+\frac{1}{2},j,k}-\upsilon_{i-\frac{1}{2},j,k}),
	\\
a_{y}\upsilon_{i,j,k}:=\frac{1}{2}(\upsilon_{i,j+\frac{1}{2},k}+\upsilon_{i,j-\frac{1}{2},k}),\quad
d_{y}\upsilon_{i,j,k}:=\frac{1}{h}(\upsilon_{i,j+\frac{1}{2},k}-\upsilon_{i,j-\frac{1}{2},k}),
	\\
a_{z}\upsilon_{i,j,k}:=\frac{1}{2}(\upsilon_{i,j,k+\frac{1}{2}}+\upsilon_{i,j,k-\frac{1}{2}}),\quad
d_{z}\upsilon_{i,j,k}:=\frac{1}{h}(\upsilon_{i,j,k+\frac{1}{2}}-\upsilon_{i,j,k-\frac{1}{2}}).
	\end{split}
	\end{equation*}
The discrete gradient $\nabla_{h}:\mathcal{C}_{\rm per}\rightarrow\vec{\mathcal{E}}_{\rm per}$ and the discrete divergence $\nabla_{h}\cdot:\vec{\mathcal{E}}_{\rm per}\rightarrow\mathcal{C}_{\rm per}$ are defined via
	\begin{align*}
\nabla_{h}\upsilon_{i,j,k} &:= \left(D_{x}\upsilon_{i+\frac{1}{2},j,k},D_{y}\upsilon_{i,j+\frac{1}{2},k},D_{z}\upsilon_{i,j,k+\frac{1}{2}}\right),
	\\
\nabla_{h}\cdot \vec{f}_{i,j,k} & :=d_{x}f_{i,j,k}^{x}+d_{y}f_{i,j,k}^{y}+d_{z}f_{i,j,k}^{z},
	\end{align*}
where $\upsilon\in\mathcal{C}_{\rm per}$ and $\vec{f}=(f^{x},f^{y},f^{z})\in\vec{\mathcal{E}}_{\rm per}$. The standard 3D discrete Laplacian, $\Delta_{h}:\mathcal{C}_{\rm per}\rightarrow\mathcal{C}_{\rm per}$ is given by
	\begin{align*}
\Delta_{h}\upsilon_{i,j,k}:&=d_{x}(D_{x}v)_{i,j,k}+d_{y}(D_{y}v)_{i,j,k}+d_{z}(D_{z}v)_{i,j,k})
	\\
& = \frac{1}{h^{2}}(\upsilon_{i+1,j,k}+\upsilon_{i-1,j,k}+\upsilon_{i,j+1,k}+\upsilon_{i,j-1,k}+\upsilon_{i,j,k+1}+\upsilon_{i,j,k-1}-6\upsilon_{i,j,k}).
	\end{align*}
More generally, if $\mathcal{D}$ is a periodic scalar function that is defined at all of the face center points and $\vec{f}=(f^{x},f^{y},f^{z})\in\vec{\mathcal{E}}_{\rm per},$ then $\mathcal{D}\vec{f}=(f^{x},f^{y},f^{z})\in\vec{\mathcal{E}}_{\rm per},$ assuming point-wise multiplication, and we may define
	\begin{equation*}
\nabla_{h}\cdot (\mathcal{D}\vec{f})_{i,j,k}:=d_{x}(\mathcal{D}{f}^{x})_{i,j,k}
+d_{y}(\mathcal{D}{f}^{y})_{i,j,k}+d_{z}(\mathcal{D}{f}^{z})_{i,j,k}).
	\end{equation*}
Specifically, if $\upsilon\in\mathcal{C}_{\rm per},$ then $\nabla_{h}\cdot(\mathcal{D}\nabla_{h}):\mathcal{C}_{\rm per}\rightarrow\mathcal{C}_{\rm per}$ is defined point-wise via
	\begin{equation*}
\nabla_{h}\cdot(\mathcal{D}\nabla_{h}\upsilon)_{i,j,k}=d_{x}(\mathcal{D}D_{x}\upsilon)_{i,j,k}
+d_{y}(\mathcal{D}D_{y}\upsilon)_{i,j,k}+d_{z}(\mathcal{D}D_{z}\upsilon)_{i,j,k}.
	\end{equation*}

Now we are ready to  define the following grid inner products:
	\begin{align*}
\langle\upsilon,\xi\rangle_{\Omega} & := h^{3}\sum_{i,j,k=1}^{N}\upsilon_{i,j,k}\xi_{i,j,k}, \quad \upsilon,\xi\in\mathcal{C}_{\rm per},
	\\
[\upsilon,\xi]_{x} & :=\langle a_{x}(\upsilon\xi),1\rangle_{\Omega}, \quad \upsilon,\xi\in\mathcal{E}_{\rm per}^{x},
	\\
[\upsilon,\xi]_{y} & : =\langle a_{y}(\upsilon\xi),1\rangle_{\Omega}, \quad \upsilon,\xi\in\mathcal{E}_{\rm per}^{y},
	\\
[\upsilon,\xi]_{z} & :=\langle a_{z}(\upsilon\xi),1\rangle_{\Omega}, \quad \upsilon,\xi\in\mathcal{E}_{\rm per}^{z},
	\\
[\vec{f}_{1},\vec{f}_{2}]_{\Omega} & :=[f_{1}^{x},f_{2}^{x}]_{x}+[f_{1}^{y},f_{2}^{y}]_{y}+[f_{1}^{z},f_{2}^{z}]_{z}, \quad \vec{f}_{i}=(f_{i}^{x},f_{i}^{y},f_{i}^{z})\in\vec{\mathcal{E}}_{\rm per},\;i=1,2.
	\end{align*}
We define the following norms for grid functions: for $\upsilon\in\mathcal{C}_{\rm per},$ then $\| \upsilon \|_{2}^{2}:=\langle\upsilon,\upsilon\rangle_{\Omega};$ $\|\upsilon\|_{p}^{p}:=\langle|\upsilon|^{p},1\rangle_{\Omega},$ for $1\leq p <\infty,$ and $\| \upsilon \|_{\infty}:= \max_{1\leq i,j,k\leq N}  | \upsilon_{i,j,k} |$. We define norms of the gradient as follows: for $\upsilon\in\mathcal{C}_{\rm per},$
\begin{equation*}
\begin{split}
\|\nabla_{h}\upsilon\|_{2}^{2}:=[\nabla_{h}\upsilon,\nabla_{h}\upsilon]_{\Omega}=
[D_{x}\upsilon,D_{x}\upsilon]_{x}+[D_{y}\upsilon,D_{y}\upsilon]_{y}+[D_{z}\upsilon,D_{z}\upsilon]_{z},
\end{split}
\end{equation*}
and, more generally, for $1\leq p <\infty,$
\begin{equation*}
\begin{split}
\| \nabla_{h}\upsilon \|_{p}:=
([| D_{x}\upsilon|^{p},1]_{x}+[| D_{y}\upsilon|^{p},1]_{y}+[| D_{z}\upsilon|^{p},1]_{z})^{\frac{1}{p}}.
\end{split}
\end{equation*}
Higher order norms can be defined. For example,
\begin{equation*}
\| \upsilon \|_{H_{h}^{1}}^{2}:= \| \upsilon \|_{2}^{2}
 + \| \nabla_h \upsilon \|_{2}^{2}, \, \, \,
\| \upsilon \|_{H_{h}^{2}}^{2}:= \| \upsilon \|_{H_h^1}^{2}
 + \| \Delta_h \upsilon \|_2^2 .
\end{equation*}

	\begin{lem}[\cite{wise09a}]
	\label{L1}
Let $\mathcal{D}$ be an arbitrary periodic,\;scalar function defined on all of the face center points. For any $\psi,\nu\in\mathcal{C}_{\rm per}$ and any $\vec{f}\in\vec{\mathcal{E}}_{\rm per},$ the following summation by parts formulas are valid:
	\begin{equation*}
\langle\psi,\nabla_{h}\cdot\vec{f}\rangle_{\Omega}=-[\nabla_{h}\psi,\vec{f}]_{\Omega},\quad  \langle\psi,\nabla_{h}\cdot(\mathcal{D}\nabla_{h}\nu)\rangle_{\Omega} =-[\nabla_{h}\psi,\mathcal{D}\nabla_{h}\nu]_{\Omega}.
	\end{equation*}
	\end{lem}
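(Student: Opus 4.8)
The plan is to reduce both identities to one-dimensional index shifts, using the periodicity of all grid functions to discard boundary contributions, and to obtain the second formula as an immediate specialization of the first. I would begin by expanding the left-hand side of the first identity using the definitions of the cell-centered inner product and the discrete divergence,
\[
\langle\psi,\nabla_{h}\cdot\vec{f}\rangle_{\Omega}
= h^{3}\sum_{i,j,k=1}^{N}\psi_{i,j,k}\bigl(d_{x}f^{x}+d_{y}f^{y}+d_{z}f^{z}\bigr)_{i,j,k},
\]
and then treating the three directional contributions separately. By the symmetry of the construction it suffices to handle the $x$-term; inserting the definition of $d_x$, that term equals $h^{2}\sum_{i,j,k}\psi_{i,j,k}\bigl(f^{x}_{i+1/2,j,k}-f^{x}_{i-1/2,j,k}\bigr)$.

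The key step is a one-dimensional summation by parts (Abel summation) in the $i$-index. Using periodicity, a relabeling of the summation index gives $\sum_{i}\psi_{i,j,k}f^{x}_{i-1/2,j,k}=\sum_{i}\psi_{i+1,j,k}f^{x}_{i+1/2,j,k}$, so that the $x$-contribution becomes
\[
-h^{2}\sum_{i,j,k}\bigl(\psi_{i+1,j,k}-\psi_{i,j,k}\bigr)f^{x}_{i+1/2,j,k}
= -h^{3}\sum_{i,j,k}(D_{x}\psi)_{i+1/2,j,k}\,f^{x}_{i+1/2,j,k}.
\]
It then remains to recognize this last expression as $-[D_{x}\psi,f^{x}]_{x}$. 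To this end I would unfold the definition $[D_{x}\psi,f^{x}]_{x}=\langle a_{x}(D_{x}\psi\cdot f^{x}),1\rangle_{\Omega}$, expand the averaging operator $a_{x}$, and observe that a second periodic index shift makes the two half-point sums coincide, leaving exactly $h^{3}\sum_{i,j,k}(D_{x}\psi)_{i+1/2,j,k}f^{x}_{i+1/2,j,k}$. Adding the analogous $y$- and $z$-identities yields $\langle\psi,\nabla_{h}\cdot\vec{f}\rangle_{\Omega}=-[\nabla_{h}\psi,\vec{f}]_{\Omega}$.

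Finally, the second identity follows by applying the first to the particular face-centered field $\vec{f}=\mathcal{D}\nabla_{h}\nu=(\mathcal{D}D_{x}\nu,\mathcal{D}D_{y}\nu,\mathcal{D}D_{z}\nu)\in\vec{\mathcal{E}}_{\rm per}$: by the definition of $\nabla_{h}\cdot(\mathcal{D}\nabla_{h}\nu)$ the left-hand side becomes $\langle\psi,\nabla_{h}\cdot(\mathcal{D}\nabla_{h}\nu)\rangle_{\Omega}$ and the right-hand side becomes $-[\nabla_{h}\psi,\mathcal{D}\nabla_{h}\nu]_{\Omega}$, as claimed. The proof is essentially elementary; the only place I would be careful is the bookkeeping of the half-integer indices together with the averaging operator $a_{x}$ in the face inner product, since both the summation-by-parts shift and the collapse of $a_{x}$ rely on periodicity. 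I would therefore verify explicitly that no boundary terms survive in either shift, which is precisely where the periodic boundary assumption is used.
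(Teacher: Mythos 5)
Your proof is correct and complete; note that the paper itself gives no proof of this lemma, which is simply quoted from \cite{wise09a}, so your direct verification (expansion of the inner products, a periodic index shift to perform the one-dimensional summation by parts in each direction, and the specialization $\vec{f}=\mathcal{D}\nabla_{h}\nu$ for the second identity) is exactly the standard argument behind the cited result. You also correctly flagged the only delicate bookkeeping point: the face inner product $[\cdot,\cdot]_{x}$ is defined through the averaging operator $a_{x}$, and it is periodicity that collapses $h^{3}\sum_{i,j,k}a_{x}(\upsilon\xi)_{i,j,k}$ to $h^{3}\sum_{i,j,k}(\upsilon\xi)_{i+1/2,j,k}$, which is what makes the shifted sums match with no surviving boundary terms.
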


To facilitate the convergence analysis, we need to introduce a discrete analogue of the space $H_{\rm per}^{-1},$ as outlined in \cite{wang11a}. For any $f\in\mathcal{C}_{\rm per}$, with $\overline{f}:= |\Omega|^{-1}\langle f , 1 \rangle_\Omega =0$, we define $\psi:= (-\Delta_h)^{-1} f\in\mathcal{C}_{\rm per}$ as the periodic solution of
	\begin{equation*}
- \Delta_h \psi = f ,  \quad \overline{\psi} = 0 .
	\end{equation*}
In turn, the inner product $\langle \, \cdot , \, \cdot \, \rangle_{-1,h}$ and the $\| \cdot \|_{-1,h}$ norm are introduced as
	\begin{equation*}
  \langle f ,g \rangle_{-1,h} := \langle f , (-\Delta)^{-1} g \rangle_\Omega
  = \langle (-\Delta)^{-1} f, g \rangle_\Omega ,  \quad
  \| f \|_{-1,h} := \sqrt{ \langle f , f \rangle_{-1,h} } .
	\end{equation*}

	\begin{lem}[\cite{cheng16a}]
	\label{lem:L4 est}
For any $f \in \mathring{\mathcal{C}}_{\rm per}$, we have
\begin{equation*}
 \| f \|_4 \le C \| f \|_{-1,h}^\frac18 \cdot \| \nabla_h f \|_2^\frac78 ,
	\end{equation*}
for some constant $C>0$ that is independent of $h$ and $f$.
	\end{lem}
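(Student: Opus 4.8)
The plan is to reduce the desired bound to two simpler statements: a purely discrete Sobolev--Gagliardo--Nirenberg (GNS) inequality that controls the non-Hilbertian $\|\cdot\|_4$ norm, and an elementary interpolation between the $\|\cdot\|_{-1,h}$ and $\|\nabla_h\cdot\|_2$ norms that is nothing more than Hilbert-space Cauchy--Schwarz. Because $\Omega\subset\mathbb{R}^3$, the scaling that fixes the exponents $\tfrac18$ and $\tfrac78$ is the three-dimensional one, and all estimates below are understood in that setting. The first ingredient I would use is the three-dimensional discrete GNS inequality
\[
\|f\|_4 \le C\,\|f\|_2^{1/4}\,\|\nabla_h f\|_2^{3/4},\qquad f\in\mathring{\mathcal{C}}_{\rm per},
\]
with a constant $C>0$ independent of $h$ and $f$; establishing this is the crux, and it is discussed last.

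For the second ingredient, set $\psi:=(-\Delta_h)^{-1}f\in\mathring{\mathcal{C}}_{\rm per}$, so that $-\Delta_h\psi=f$ and, directly from the definition of the $\|\cdot\|_{-1,h}$ norm together with Lemma~\ref{L1}, $\|f\|_{-1,h}^2=\langle f,\psi\rangle_\Omega=[\nabla_h\psi,\nabla_h\psi]_\Omega=\|\nabla_h\psi\|_2^2$. Applying the summation-by-parts identity of Lemma~\ref{L1} (with $\mathcal{D}\equiv1$) and then the Cauchy--Schwarz inequality for the edge inner product $[\cdot,\cdot]_\Omega$, I obtain
\[
\|f\|_2^2=\langle -\Delta_h\psi,f\rangle_\Omega=[\nabla_h\psi,\nabla_h f]_\Omega\le\|\nabla_h\psi\|_2\,\|\nabla_h f\|_2=\|f\|_{-1,h}\,\|\nabla_h f\|_2,
\]
hence $\|f\|_2\le\|f\|_{-1,h}^{1/2}\|\nabla_h f\|_2^{1/2}$. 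Substituting this into the discrete GNS inequality and collecting exponents, using $\tfrac14\cdot\tfrac12=\tfrac18$ and $\tfrac34+\tfrac14\cdot\tfrac12=\tfrac78$, gives
\[
\|f\|_4\le C\big(\|f\|_{-1,h}^{1/2}\|\nabla_h f\|_2^{1/2}\big)^{1/4}\|\nabla_h f\|_2^{3/4}=C\,\|f\|_{-1,h}^{1/8}\,\|\nabla_h f\|_2^{7/8},
\]
which is the claimed estimate.

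The main obstacle is therefore the discrete GNS inequality of the first step, since it is the only genuinely non-Hilbertian bound and its constant must be uniform in $h$: Cauchy--Schwarz in frequency reaches the $\|\cdot\|_2$, $\|\nabla_h\cdot\|_2$, and $\|\cdot\|_{-1,h}$ norms but never the $\|\cdot\|_4$ norm. To establish it I would pass to the band-limited trigonometric interpolant $f_{\rm I}$ of the grid function and combine three facts: (i) the continuous GNS inequality $\|g\|_{L^4}\le C\|g\|_{L^2}^{1/4}\|\nabla g\|_{L^2}^{3/4}$ for mean-zero periodic $g$ on $\Omega$; (ii) a Marcinkiewicz--Zygmund-type equivalence between the discrete grid $\|\cdot\|_p$ norm of the samples and the continuous $L^p(\Omega)$ norm of $f_{\rm I}$, with $h$-independent constants; and (iii) the spectral comparison $c|\vec{k}|^2\le\mu_{\vec k}\le C|\vec{k}|^2$ between the eigenvalues $\mu_{\vec k}$ of $-\Delta_h$ and the continuous symbol, which yields $\|\nabla_h f\|_2\sim\|\nabla f_{\rm I}\|_{L^2}$ and matches the $\|\cdot\|_2$ norms as well. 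Alternatively, the inequality can be proved intrinsically through a discrete Hausdorff--Young estimate followed by Hölder in the frequency variable, after checking that the resulting weighted frequency sum converges $h$-uniformly in three dimensions. In either route the delicate point is precisely the $h$-uniform control of $\|\cdot\|_4$, and once that discrete Sobolev bound is in hand, the remaining two steps are immediate.
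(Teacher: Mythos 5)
The paper never proves Lemma~\ref{lem:L4 est}: it is imported, citation and all, from \cite{cheng16a}, so there is no in-paper argument to compare against, and your proposal has to be judged against what that cited literature actually does. Judged that way, your argument is correct and follows essentially the same route. The reduction step is fully rigorous as you wrote it: with $\psi=(-\Delta_h)^{-1}f$, Lemma~\ref{L1} gives both $\|f\|_{-1,h}^2=[\nabla_h\psi,\nabla_h\psi]_\Omega$ and $\|f\|_2^2=[\nabla_h\psi,\nabla_h f]_\Omega\le\|f\|_{-1,h}\,\|\nabla_h f\|_2$, and the exponent bookkeeping ($\tfrac14\cdot\tfrac12=\tfrac18$ and $\tfrac34+\tfrac18=\tfrac78$) matches the three-dimensional scaling of the paper's setting; you are also right that the stated exponents are the $d=3$ ones (in $d=2$ the same reduction would produce $\tfrac14$ and $\tfrac34$). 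The crux you isolate, the $h$-uniform discrete bound $\|f\|_4\le C\|f\|_2^{1/4}\|\nabla_h f\|_2^{3/4}$, is precisely the substance of the cited result, and your first route is the standard proof in this literature: identify the grid function with its band-limited trigonometric interpolant $f_{\rm I}$, use the Marcinkiewicz--Zygmund equivalence of discrete and continuous $L^p$ norms (valid for $1<p<\infty$, in particular $p=4$, with $h$-independent constants), note that the interpolant of a mean-zero grid function is mean zero, compare the discrete symbol $4h^{-2}\sin^2(\xi h/2)$ with $\xi^2$ to get $\|\nabla f_{\rm I}\|_{L^2}\le C\|\nabla_h f\|_2$, and invoke the continuous Gagliardo--Nirenberg inequality.

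One caution on your proposed alternative: the ``discrete Hausdorff--Young plus H\"older in frequency'' route does not close in three dimensions at this exact exponent. That argument requires the convergence of a frequency sum of the form $\sum_{0<|\vec k|\le K}|\vec k|^{-3}$, which diverges logarithmically in $K\sim h^{-1}$; it is the discrete shadow of the fact that the endpoint embedding $H^{3/4}(\mathbb{T}^3)\hookrightarrow L^4$ is true but not reachable by Hausdorff--Young and H\"older alone. As sketched, that route would yield a constant growing like a power of $\log(1/h)$, or force you to give up an $\varepsilon$ of regularity, and a Littlewood--Paley-type refinement would be needed to rescue it. Since your primary route is sound and self-contained, this does not affect the correctness of the proposal; just drop or repair the alternative.
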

	
To prove the positivity of our methods, we will need the following result.

	\begin{lem}[\cite{chen19b}]
	\label{CH-positivity-Lem-0}
Suppose that $\phi_{1},\phi_{2}\in\mathcal{C}_{\rm per},$ with $\phi_{1}-\phi_{2}\in\mathring{\mathcal{C}}_{\rm per}$. Assume that $0<\phi_{1},\phi_{2}<M_{h}$, for any  $M_h>0$ that may depend on $h$. Then we have the following estimate:
	\begin{equation}
\| (- \Delta_h)^{-1} ( \phi_1 - \phi_2 ) \|_\infty \le C M_h ,
 	\label{AG-Lem-0}
	\end{equation}
for some constant $C>0$ that only depends on $\Omega$.
	\end{lem}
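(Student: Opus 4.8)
The plan is to reduce the $L^\infty$ bound to a discrete $H_h^2$ estimate followed by a discrete Sobolev embedding. Write $\psi := \phi_1 - \phi_2$, which lies in $\mathring{\mathcal{C}}_{\rm per}$ by hypothesis. Since $0 < \phi_1, \phi_2 < M_h$ at every grid point, the difference obeys $|\psi_{i,j,k}| < M_h$, whence $\| \psi \|_\infty \le M_h$ and therefore $\| \psi \|_2 \le |\Omega|^{1/2} M_h$. Setting $u := (-\Delta_h)^{-1}\psi$, i.e. the unique mean-zero periodic solution of $-\Delta_h u = \psi$, the target estimate \eqref{AG-Lem-0} becomes $\| u \|_\infty \le C M_h$.

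First I would bound $\| u \|_{H_h^2}$ by $\| \psi \|_2$. The three constituent pieces of $\| u \|_{H_h^2}^2 = \| u \|_2^2 + \| \nabla_h u \|_2^2 + \| \Delta_h u \|_2^2$ are each controlled directly: we have $\| \Delta_h u \|_2 = \| \psi \|_2$ by definition; taking $\langle -\Delta_h u, u\rangle_\Omega = \langle \psi, u\rangle_\Omega$ and applying the summation-by-parts identity of Lemma~\ref{L1} gives $\| \nabla_h u \|_2^2 = \langle \psi, u\rangle_\Omega \le \| \psi \|_2 \| u \|_2$; and since $\bar u = 0$ the discrete Poincar\'e inequality yields $\| u \|_2 \le C \| \nabla_h u \|_2$. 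Combining these produces $\| u \|_2 + \| \nabla_h u \|_2 \le C \| \psi \|_2$, so that altogether $\| u \|_{H_h^2} \le C \| \psi \|_2 \le C |\Omega|^{1/2} M_h$, with $C$ depending only on $\Omega$.

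The final step is the discrete Sobolev embedding $\| u \|_\infty \le C \| u \|_{H_h^2}$, which holds with an $h$-independent constant exactly because the spatial dimension is at most three (the continuous analogue being $H^2 \hookrightarrow L^\infty$ for $d \le 3$). Chaining this with the previous estimate gives $\| u \|_\infty \le C M_h$, with $C$ depending only on $\Omega$, as asserted; note that the bound is linear in $M_h$ and no constant depends on $h$, so the possible $h$-dependence of $M_h$ causes no difficulty.

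The step I expect to be the main obstacle is making the last two inequalities genuinely $h$-uniform. For the embedding, one must ensure the constant does not blow up as $h \to 0$; the cleanest justification uses the discrete Fourier representation on the periodic lattice, where the eigenvalues of $-\Delta_h$ are nonnegative and $\| \Delta_h u \|_2$ dominates the full collection of second-order difference quotients (pure and mixed), so that the standard dimension-dependent Sobolev inequality can be reproduced at the discrete level uniformly in $h$. This restriction to $d \le 3$ is precisely where the ambient dimension enters, and it is the only genuinely delicate point; the remaining manipulations are routine.
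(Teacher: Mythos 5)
Your argument is essentially correct, but it cannot be compared line-by-line with the paper because the paper offers no proof of this lemma at all: it is imported wholesale from \cite{chen19b}. The proof in that reference is a direct discrete-Fourier computation. Writing $\psi := \phi_1 - \phi_2 \in \mathring{\mathcal{C}}_{\rm per}$, one expands $\psi$ in the discrete Fourier basis, so that $(-\Delta_h)^{-1}\psi = \sum_{k \neq 0} \lambda_k^{-1} \hat{\psi}_k e_k$, where $\lambda_k$ are the (positive) eigenvalues of $-\Delta_h$; then $\| (-\Delta_h)^{-1}\psi \|_\infty \le \sum_{k\neq 0} \lambda_k^{-1} |\hat{\psi}_k| \le \bigl( \sum_{k \neq 0} \lambda_k^{-2} \bigr)^{1/2} \bigl( \sum_{k\neq 0} |\hat{\psi}_k|^2 \bigr)^{1/2} \le C \| \psi \|_2 \le C |\Omega|^{1/2} M_h$, where the key fact is that $\lambda_k \ge c |k|^2$ uniformly in $h$ (since $\sin^2$ is bounded below by a quadratic on the relevant range), so that $\sum_{k\neq 0} \lambda_k^{-2} < \infty$ uniformly in $h$ precisely when $d \le 3$. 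Your route --- discrete Poincar\'e plus the elliptic identity $\|\Delta_h u\|_2 = \|\psi\|_2$ to get $\| u \|_{H_h^2} \le C \|\psi\|_2$, followed by an $h$-uniform embedding $H_h^2 \hookrightarrow \ell^\infty$ --- is a correct and more modular repackaging of exactly the same underlying facts, and each intermediate step you state (the pointwise bound $|\psi| < M_h$, the summation-by-parts identity, the Poincar\'e inequality for mean-zero grid functions) is sound. What the direct argument buys is brevity: it never needs the embedding as a standalone result. What your version buys is reusability of the intermediate $H_h^2$ bound. Be aware, however, that your one unproven ingredient --- the $h$-uniform discrete Sobolev embedding --- is not free: its proof is essentially the very Fourier computation sketched above (eigenvalue lower bound plus summability of $(1+\lambda_k)^{-2}$ for $d\le 3$), so in a complete write-up you should either carry that computation out or cite it explicitly (discrete embeddings of this type appear, e.g., in \cite{guo16}); once you do, the two proofs collapse to the same calculation.
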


	\section {A first order numerical scheme}
	\label{sec:1st scheme}
We follow a convex-concave decomposition methodology in deriving our scheme, and consider the following semi-implicit, fully discrete schemes: given $\phi^{n}\in\mathcal{C}_{\rm per},$ find $\phi^{n+1}, \mu^{n+1}\in\mathcal{C}_{\rm per},$ such that
	\begin{equation}
\frac{\phi^{n+1}-\phi^{n}}{\dt}=\Delta_h \mu^{n+1},
  \quad
  \mu^{n+1}=-\frac{8}{3}(\phi^{n+1})^{-9} + \frac{8}{3}(\phi^{n})^{-3}
  - \varepsilon^2  \Delta_h\phi^{n+1} .
	\label{scheme-AG-1}
	\end{equation}
If solutions exist, it is obvious that the numerical scheme  is mass conservative, i.e., $\overline{\phi^{n+1}} = \overline{\phi^n} = \cdots = \overline{\phi^0} := \beta_0$, since
	\[
\ciptwo{\phi^{n+1}-\phi^{n}}{1} = 	\dt \ciptwo{\Delta_h \mu^{n+1}}{1} = -\dt\eipvec{\nabla_h\mu^{n+1}}{\nabla_h 1} = 0.
	\]
We will describe in a later section how to construct the initial data for the discrete scheme, $\phi^0$, from the initial data  for the PDE system~\eqref{equation-AG}.
	
We define the discrete energy $F_{h}(\phi):\mathcal{C}_{\rm per}\to \mathbb{R}$ as
	\begin{equation} \label{def: discrete energy}
F_{h}(\phi) = F_{h,c}(\phi)-F_{h,e}(\phi) , \quad F_{h,c}(\phi)=\frac{1}{3} \langle \phi^{-8} , 1 \rangle_\Omega + \frac{\varepsilon^{2}}{2} \| \nabla_h \phi \|_2^2 , \quad F_{h,e}(\phi)=\frac{4}{3} \langle  \phi^{-2} , 1 \rangle_\Omega .
	\end{equation}
Observe that both $F_{h,c}(\phi)$ and $F_{h,e}(\phi)$ are convex, and both require $\phi$ to be positive to be well defined.  Next, we prove that positive numerical solutions for~\eqref{scheme-AG-1} exist and are unique.

	\subsection{Existence and uniqueness of positive solutions}
	
	\begin{thm}
	\label{CH-positivity}
Let $\phi^{n}\in\mathcal{C}_{\rm per}$, with $\delta_0  \le \phi^n$, point-wise, for some $\delta_0  >0$. Then, there exists a unique solution $\phi^{n+1}\in\mathcal{C}_{\rm per}$ to the scheme~\eqref{scheme-AG-1}, with $\overline{\phi^{n+1}}=\overline{{\phi}^n},$ and $\phi^{n+1} >0$, point-wise.
	\end{thm}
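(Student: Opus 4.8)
The plan is to recast the scheme as the Euler--Lagrange equation of a strictly convex functional on an admissible set of positive mass-conserving functions, and then use the singularity of the $\phi^{-8}$ term to rule out solutions that touch the boundary of that set. First I would eliminate $\mu^{n+1}$ by applying $(-\Delta_h)^{-1}$ to the first equation in~\eqref{scheme-AG-1}, which is legitimate since $\phi^{n+1}-\phi^n$ has mean zero. Writing $\phi := \phi^{n+1}$, the scheme becomes the statement that $\phi$ is a critical point of the functional
\begin{equation*}
J(\phi) = \frac{1}{2\dt} \| \phi - \phi^n \|_{-1,h}^2
 + \frac{1}{3} \langle \phi^{-8}, 1 \rangle_\Omega
 + \frac{\varepsilon^2}{2} \| \nabla_h \phi \|_2^2
 - \frac{8}{3} \langle (\phi^n)^{-3}, \phi \rangle_\Omega ,
\end{equation*}
over the admissible set $A_{\delta} := \{ \phi \in \mathcal{C}_{\rm per} \mid \overline{\phi} = \beta_0, \ \delta \le \phi \}$ for a small $\delta > 0$. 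The first three terms are convex (the $H_h^{-1}$ term and the gradient term are quadratic, and $x \mapsto x^{-8}$ is convex on $(0,\infty)$), and the last term is linear, so $J$ is strictly convex on the convex set $A_\delta$; hence a minimizer, if it lies in the interior, is the unique critical point.

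The key step is the boundary analysis showing the minimizer stays strictly positive. I would argue that $J$ attains its minimum over the closed, bounded, convex set $\overline{A_\delta} \cap \{ \phi \le M_h \}$ (an upper bound $M_h$ following since $\overline{\phi}=\beta_0$ is fixed and the mean-zero constraint together with Lemma~\ref{CH-positivity-Lem-0} controls the spread), and then show the minimizer cannot have any component equal to $\delta$ when $\delta$ is taken sufficiently small. The hard part will be this boundary argument: suppose the minimizer $\phi^\star$ attains its minimal value at some grid point, say $\phi^\star_{i_0,j_0,k_0} = \delta$. Because the total mass is fixed at $\beta_0>0$, there must be another grid point where $\phi^\star$ exceeds $\beta_0$, bounded above by $M_h$. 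I would then compute the directional derivative of $J$ at $\phi^\star$ in the admissible direction that increases the value at the minimal point and decreases it at the large point (this direction preserves mean zero). The singular term contributes $-\frac{8}{3}\delta^{-9}$ from the $\phi^{-8}$ part at the small point, which blows up as $\delta \to 0^+$, while the contributions of the quadratic terms and the linear term remain bounded by quantities controlled through Lemma~\ref{CH-positivity-Lem-0} (specifically $\| (-\Delta_h)^{-1}(\phi^\star - \phi^n)\|_\infty \le C M_h$) and the positive lower bound $\delta_0$ on $\phi^n$. Thus for $\delta$ small enough the directional derivative is strictly negative, contradicting minimality; therefore $\phi^\star > \delta$ at every point, i.e.\ the minimizer lies in the interior.

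Once the interior minimizer is established, it is automatically a solution of the Euler--Lagrange equation, which is exactly~\eqref{scheme-AG-1} after substituting back the definition of $\mu^{n+1}$; strict convexity of $J$ on the open convex set of positive mean-$\beta_0$ functions then gives uniqueness of this critical point, completing the existence-and-uniqueness claim together with positivity. I expect the main obstacle to be making the directional-derivative estimate uniform: one must verify that all bounded contributions are controlled by a constant independent of $\delta$ (depending only on $\delta_0$, $\beta_0$, $\varepsilon$, $\dt$, and $h$ through $M_h$), so that a single threshold $\delta$ suffices to force the strict-negativity contradiction at every candidate boundary point.
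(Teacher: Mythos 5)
Your proposal follows the paper's own strategy essentially step for step: recast \eqref{scheme-AG-1} as the Euler--Lagrange equation of a strictly convex functional over the mass-conserving positive functions, minimize over the compact truncated set $\{\delta \le \phi \le M_h\}$, and rule out a minimizer touching the value $\delta$ by testing with the direction that raises the global minimum and lowers the global maximum, using the blow-up of the $\delta^{-9}$ term against the bounded contributions controlled by Lemma~\ref{CH-positivity-Lem-0} and the bound $(\phi^n)^{-3}\le \delta_0^{-3}$. The admissible set, the test direction, and the role of each ingredient coincide with the paper's proof of Theorem~\ref{CH-positivity}.

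There is, however, one concrete error: the sign of the linear (explicitly treated, concave) term in your functional. You wrote $-\frac{8}{3}\langle(\phi^n)^{-3},\phi\rangle_\Omega$, whereas the correct functional (cf.~\eqref{AG-positive-1}) carries $+\frac{8}{3}\langle(\phi^n)^{-3},\phi\rangle_\Omega$. The scheme treats the concave energy $-F_{h,e}$ explicitly, so the term entering the minimization is $-\langle \delta_\phi F_{h,e}(\phi^n),\phi\rangle_\Omega = +\frac{8}{3}\langle(\phi^n)^{-3},\phi\rangle_\Omega$, since $\delta_\phi F_{h,e}(\phi^n) = -\frac{8}{3}(\phi^n)^{-3}$. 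Setting the variation of your $J$ to zero instead produces, up to an additive constant, $\mu = -\frac{8}{3}\phi^{-9} - \frac{8}{3}(\phi^n)^{-3} - \varepsilon^2\Delta_h\phi$, which is not the chemical potential in \eqref{scheme-AG-1}; so, as written, your critical points solve a different scheme. Fortunately the error is purely local: the term is linear in $\phi$ and bounded by $\delta_0^{-3}$ in the directional-derivative estimate regardless of its sign, so convexity, the boundary contradiction, and uniqueness all go through verbatim once the sign is corrected. One smaller omission: you assert that the ``quadratic terms'' contribute boundedly to the directional derivative, but you never say how the surface-diffusion contribution $-\varepsilon^2\bigl(\Delta_h\varphi^{\star}_{\vec{\alpha_{0}}}-\Delta_h\varphi^{\star}_{\vec{\alpha_{1}}}\bigr)$ is controlled. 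The clean route, used in the paper, is the second-order extremum conditions $\Delta_h\varphi^{\star}_{\vec{\alpha_{0}}}\ge 0$ and $\Delta_h\varphi^{\star}_{\vec{\alpha_{1}}}\le 0$, which make this term nonpositive so it can simply be dropped; a crude $O(M_h h^{-2})$ bound would also suffice since $\delta$ is allowed to depend on $h$ and $\dt$.
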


	\begin{proof}
If a positive numerical solution exists, then it must satisfy the mass conservation property. Thus, we will only look for such solutions. Set $\beta_0 := \overline{\phi^n}$ and $\varphi^n := \phi^n - \beta_0$. The numerical solution of~\eqref{scheme-AG-1} must be a minimizer of the following discrete energy functional:
	\begin{equation}
\mathcal{J}(\varphi):=\frac{1}{2\dt} \| \varphi-\varphi^{n} \|_{-1,h}^{2} + \frac{1}{3}\langle(\varphi+\beta_0)^{-8},1\rangle_{\Omega} + \frac{\varepsilon^{2}}{2} \| \nabla_h \varphi \|_2^2 + \frac{8}{3} \langle (\varphi + \beta_0), (\phi^{n})^{-3} \rangle_{\Omega} ,
	\label{AG-positive-1}
	\end{equation}
over the admissible set
	\begin{equation}
\mathring{V}_{h}:=\{\varphi \in \mathring{\mathcal{C}}_{\rm per} \, | \,  -\beta_0 <\varphi \leq M_{h}-\beta_0 \}\subset\mathbb{R}^{N^{3}},
	\label{def-V_h}
	\end{equation}
where $M_{h}=\frac{ \beta_0 |\Omega| }{h^{3}}$. In fact, such a bound comes from the fact that, the numerical value of $\varphi$ at any numerical grid point $(i,j,k)$ has to be bounded by $M_h - \beta_0$, due to the fact that $\sum_{i,j,k}^N \phi_{i,j,k} = \frac{ \beta_0 |\Omega| }{h^{3}}$, as well as the positivity assumption for $\phi$: $\phi_{i,j,k} > 0$, for any $(i,j,k)$. The function $\mathcal{J}$ is a strictly convex function over this domain. Consider the following closed domain: for any $\delta\in (0,1)$,
	\begin{equation}
\mathring{V}_{\delta,h}:=\{\varphi\in\mathring{\mathcal{C}}_{\rm per}\, | \, \delta - \beta_0 \le \varphi \le M_{h}  - \beta_0 \}\subset\mathbb{R}^{N^{3}}.
 	\label{def-V_h-delta}
	\end{equation}
Since $\mathring{V}_{\delta,h}$ is a bounded, compact, and convex set in the subspace $\mathring{\mathcal{C}}_{\rm per},$ there exists a (not necessarily unique) minimizer of $\mathcal{J}$ over $\mathring{V}_{\delta,h}.$ The key point of the positivity analysis is that such a minimizer can not occur on the left boundary, when $\delta$ is sufficiently small.

To get a contradiction, assume that the minimizer of $\mathcal{J}$, denoted $\varphi^\star$, occurs at the left boundary of $\mathring{V}_{\delta,h}$. In particular, assume that
	\[
\varphi_{\vec{\alpha_{0}}}^{\star}=\delta - \beta_0,
	\]
where $\vec{\alpha_{0}} := (i_0,j_0,k_0)$. Thus, the grid function $\varphi^{\star}$ has a global minimum at the grid point $\vec{\alpha_{0}}$. Suppose that $\varphi^\star$ reaches its global maximum at the grid point $\vec{\alpha_{1}}:=(i_1,j_1,k_1)$. By the the fact that $\overline{\varphi^{\star}}=0$, it is obvious that
$M_{h} \ge \varphi_{\vec{\alpha_{1}}}^{\star} + \beta_0 \ge \beta_0$.

The function $\mathcal{J}$ is smooth over $\mathring{V}_{\delta,h}$, and, for any $\psi\in\mathring{\mathcal{C}}_{\rm per}$, the directional derivative is precisely
	\begin{equation}
	\begin{split}
d_{s}\mathcal{J}(\varphi^{\star}+s\psi) |_{s=0} & := \frac{1}{\dt} \langle(-\Delta_h)^{-1} (\varphi^{\star} -\varphi^{n}), \psi \rangle_\Omega - \frac{8}{3} \langle (\varphi^{\star} + \beta_0)^{-9}, \psi \rangle_{\Omega}
	\\
& \quad  - \varepsilon^2 \langle \Delta_h \varphi^{\star} , \psi \rangle_{\Omega} + \frac{8}{3} \langle (\phi^n)^{-3},\psi \rangle_{\Omega} .
	\end{split}
	\end{equation}
Let us pick the particular direction $\psi \in \mathring{\mathcal{C}}_{\rm per}$, satisfying
	\begin{equation*}
\psi_{i,j,k} =\delta_{i,i_0}\delta_{j,j_0}\delta_{k,k_0}-\delta_{i,i_1}\delta_{j,j_1}\delta_{k,k_1},
	\end{equation*}
where $\delta_{i,j}$ is the Dirac delta function. Then the derivative may be expressed as
	\begin{equation}
	\label{AG-positive-4}
	\begin{split}
\frac{1}{h^{3}}d_{s}\mathcal{J}(\varphi^{\star}+s\psi)|_{s=0} := & \frac{1}{\dt} (-\Delta_h)^{-1} (\varphi^*_{\vec{\alpha_{0}}} + \beta_0  - \phi^{n}_{\vec{\alpha_{0}}}) - \frac{1}{\dt} (-\Delta_h)^{-1} (\varphi^{\star}_{\vec{\alpha_{1}}} + \beta_0 - \phi^n_{\vec{\alpha_{1}}})
	\\
& \quad -\frac{8}{3} (\varphi^{\star}_{\vec{\alpha_{0}}} + \beta_0)^{-9} +\frac{8}{3} (\varphi^{\star}_{\vec{\alpha_{1}}} + \beta_0)^{-9}
	\\
& \quad +\frac{8}{3} (\phi^{n}_{\vec{\alpha_{0}}} )^{-3} - \frac{8}{3} (\phi^{n}_{\vec{\alpha_{1}}} )^{-3} - \varepsilon^2 ( \Delta_h \varphi^{\star}_{\vec{\alpha_{0}}} - \Delta_h \varphi^{\star}_{\vec{\alpha_{1}}} ) .
	\end{split}
	\end{equation}
Let us define $\phi^{\star}:=\varphi^{\star} + \beta_0$. Because $\phi^{\star}_{\vec{\alpha_{0}}}=\delta$ and $\phi^{\star}_{\vec{\alpha_{1}}} \ge \beta_0$, we have
	\begin{equation}
-\frac{8}{3}(\phi^{\star}_{\vec{\alpha_{0}}})^{-9}+\frac{8}{3}(\phi^{\star}_{\vec{\alpha_{1}}})^{-9} \leq-\frac{8}{3}(\delta^{-9}-\beta_0^{-9}) .
	\label{AG-positive-5}
	\end{equation}
Since $\phi^{\star}$ has a global minimum at the grid point $\vec{\alpha_{0}}$, with, $\phi^{\star}_{\vec{\alpha_{0}}}=\delta\leq\phi^{\star}_{i,j,k}$, and a maximum at the grid point $\vec{\alpha_{1}}$, with $\phi^{\star}_{\vec{\alpha_{1}}}\geq\phi^{\star}_{i,j,k}$, for any $(i,j,k)$, the following results are valid:
\begin{equation}
\Delta_h \varphi^{\star}_{\vec{\alpha_{0}}} \ge 0, \quad
\Delta_h \varphi^{\star}_{\vec{\alpha_{1}}} \le 0.  \label{AG-positive-6}
\end{equation}
For the numerical solution $\phi^n$ at the previous time step, the a-prior assumption $\delta_0 \leq\phi^{n}\leq M_h$ indicates that
	\begin{equation}
- \delta_0^{-3}<(\phi^{n}_{\vec{\alpha_0}})^{-3} - (\phi^{n}_{\vec{\alpha_{1}}})^{-3}< \delta_0^{-3}.
	\label{AG-positive-8}
	\end{equation}
For the first two terms appearing in~\eqref{AG-positive-4}, we apply Lemma~\ref{CH-positivity-Lem-0} and obtain
	\begin{equation}
- 2 C M_h \leq(-\Delta_{h})^{-1}(\phi^{\star}-\phi^{n})_{\vec{\alpha_{0}}} -(-\Delta_{h})^{-1}(\phi^{\star}-\phi^{n})_{\vec{\alpha_{1}}}\leq 2 C M_h.
	\label{AG-positive-9}
	\end{equation}
Consequently, a substitution of (\ref{AG-positive-5}) -- (\ref{AG-positive-9}) into (\ref{AG-positive-4}) yields
	\begin{equation}
\frac{1}{h^{3}}d_{s}\mathcal{J} (\varphi^{\star}+s\psi)|_{s=0} \le -\frac{8}{3}(\delta^{-9}-\beta_0^{-9}) +\delta_0^{-3} + 2 C \dt^{-1} M_h .
	\label{AG-positive-10}
	\end{equation}
Define $D_0 := \delta_0^{-3}  + 2 C \dt^{-1} M_h$, and note that $D_0$ is a constant for fixed $\dt$ and $h$, even though it becomes singular as $\dt, h\to 0$. In particular, $D_0 = O (\dt^{-1} h^{-2}  + \delta_0^{-3})$. In any case, for any fixed $\dt$ and $h$, we can choose $\delta$ small enough so that
	\begin{equation}
- \frac83 ( \delta^{-9} - \beta_0^{-9}  ) + D_0 < 0 .	\label{AG-positive-11}
	\end{equation}
This shows that
	\begin{equation}
\frac{1}{h^{3}}d_{s}\mathcal{J} (\varphi^{\star}+s\psi)|_{s=0} < 0 ,
	\label{AG-positive-12}
	\end{equation}
for $\delta$ satisfying~\eqref{AG-positive-11}. This contradicts the assumption that $\mathcal{J}$ has a minimum at $\varphi^{\star}$.  Therefore, the global minimum of $\mathcal{J}(\varphi)$ over $\mathring{V}_{\delta,h}$ could only possibly occur at interior point, if $\delta >0$ is small. We conclude that there must be a solution $\varphi\in\mathring{\mathcal{C}}_{\rm per}$, so that
	\begin{equation}
\frac{1}{h^{3}}d_{s}\mathcal{J}(\varphi+s\psi)|_{s=0}=0,
	\end{equation}
such that $\phi=\varphi + \beta_0$ is positive. Such a minimizer is equivalent to the numerical solution of~\eqref{scheme-AG-1}. The existence of a positive numerical solution is established.

In addition, since $\mathcal{J}(\varphi)$ is a strictly convex function over $V_{h}$, the uniqueness analysis for this numerical solution is straightforward. The proof of Theorem~\ref{CH-positivity} is complete.
	\end{proof}

	\begin{rem}
This positivity-preserving analysis has been very successfully applied to the Cahn-Hilliard equation with Flory-Huggins-type logarithmic potentials~\cite{chen19b, dong19a, dong20a}. It is the first time, to our knowledge, that it has been applied to a gradient flow with singular Leonard-Jones style potential. Also see related works~\cite{Du2019, Du2020a} for the related maximum bound principles for a class of semi-linear parabolic equations.
	\end{rem}

\begin{rem}
In the positivity analysis, it is observed that $\phi^{n+1} \ge \delta$, while $\delta$ depends on $\dt$ and $h$ in a form of $\delta = O ( \dt^\frac19 + h^\frac29)$, so that $\delta \to 0$ as $\dt, h \to 0$. in other words, a directly numerical analysis would not be able to ensure a uniform distance between the numerical solution and the singular limit value of $0$. On the other hand, in the 1-D and 2-D cases, the phase separation is expected to be available at the PDE analysis level,  i.e., a uniform distance between the phase variable and the singular limit value $0$, with such a uniform distance dependent on $\varepsilon$, $\theta_0$ and the initial data. Such a theoretical property has been established for the Cahn-Hilliard model with logarithmic Flory-Huggins energy potential; see the related existing works~\cite{miranville11, Giorgini17a, miranville12}, etc. With such a theoretical bound available for the PDE solution, we are also able to derive a similar bound for the numerical solution, in combination with the convergence analysis and error estimate presented in the later section.
\end{rem}

Because of the convex-concave decomposition structure of the numerical scheme, an  unconditional energy stability is available. 

\begin{thm} \label{thm:energy stability-1st}
The scheme~\eqref{scheme-AG-1} is unconditional energy stable: for any time step $\dt>0$,
\begin{equation*}
\begin{split}
F_{h}(\phi^{k+1})+\dt\|\nabla_{h}\mu^{k+1}\|_{2}^{2}\;\leq F_{h}(\phi^{k}).
\end{split}
\end{equation*}
\end{thm}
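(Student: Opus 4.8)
The plan is to exploit the convex--concave splitting structure that is already built into the scheme~\eqref{scheme-AG-1}. The key observation is that the chemical potential $\mu^{n+1}$ is precisely $\delta_\phi F_{h,c}(\phi^{n+1}) - \delta_\phi F_{h,e}(\phi^n)$, i.e.\ the convex part $F_{h,c}$ is differentiated implicitly while the convex part $F_{h,e}$ (which enters $F_h$ with a minus sign) is differentiated explicitly. Indeed, the discrete variational derivatives are $\delta_\phi F_{h,c}(\phi) = -\frac{8}{3}\phi^{-9} - \varepsilon^2 \Delta_h \phi$ and $\delta_\phi F_{h,e}(\phi) = -\frac{8}{3}\phi^{-3}$, where the $-\varepsilon^2\Delta_h$ term arises from summation by parts (Lemma~\ref{L1}); matching these against the second equation of~\eqref{scheme-AG-1} confirms the identification. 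Throughout, the point-wise positivity of $\phi^n$ and $\phi^{n+1}$ guaranteed by Theorem~\ref{CH-positivity} ensures that the singular terms, and hence the energy $F_h$ defined in~\eqref{def: discrete energy}, are well-defined and finite.

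First I would record the two convexity inequalities. Since $F_{h,c}$ is convex and differentiable on the positive cone, the supporting-hyperplane inequality gives $F_{h,c}(\phi^{n+1}) - F_{h,c}(\phi^{n}) \le \langle \delta_\phi F_{h,c}(\phi^{n+1}), \phi^{n+1}-\phi^{n}\rangle_\Omega$. Since $F_{h,e}$ is likewise convex, $F_{h,e}(\phi^{n+1}) - F_{h,e}(\phi^{n}) \ge \langle \delta_\phi F_{h,e}(\phi^{n}), \phi^{n+1}-\phi^{n}\rangle_\Omega$, and negating this yields $-F_{h,e}(\phi^{n+1}) + F_{h,e}(\phi^{n}) \le -\langle \delta_\phi F_{h,e}(\phi^{n}), \phi^{n+1}-\phi^{n}\rangle_\Omega$. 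Adding the two and using $F_h = F_{h,c} - F_{h,e}$ produces $F_h(\phi^{n+1}) - F_h(\phi^{n}) \le \langle \mu^{n+1}, \phi^{n+1}-\phi^{n}\rangle_\Omega$.

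Second, I would convert this right-hand side using the dynamics and summation by parts. From the first equation of~\eqref{scheme-AG-1}, $\phi^{n+1}-\phi^{n} = \dt\, \Delta_h \mu^{n+1}$, so by Lemma~\ref{L1} with the periodic boundary conditions, $\langle \mu^{n+1}, \phi^{n+1}-\phi^{n}\rangle_\Omega = \dt\, \langle \mu^{n+1}, \Delta_h \mu^{n+1}\rangle_\Omega = -\dt\, \| \nabla_h \mu^{n+1} \|_2^2$. Combining with the previous inequality gives $F_h(\phi^{n+1}) + \dt\, \| \nabla_h \mu^{n+1} \|_2^2 \le F_h(\phi^{n})$, which is exactly the claimed dissipation estimate; since no constraint on $\dt$ was ever invoked, the stability is unconditional.

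The main obstacle here is verificational rather than analytic: one must confirm that $F_{h,c}$ and $F_{h,e}$ are genuinely convex on the admissible positive set, and that their discrete Euler--Lagrange derivatives reproduce exactly the terms of $\mu^{n+1}$. For $F_{h,e}$ convexity is immediate, since $\phi^{-2}$ is convex for $\phi>0$; for $F_{h,c}$ both $\phi^{-8}$ and the quadratic gradient term $\frac{\varepsilon^2}{2}\| \nabla_h \phi \|_2^2$ are convex. Once these two identifications are in place, the estimate follows purely from convexity together with a single summation by parts, and the absence of any time-step restriction is precisely the mechanism behind the unconditional energy stability.
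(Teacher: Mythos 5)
Your proof is correct and follows essentially the same route as the paper's own argument: the convex--concave splitting $F_h = F_{h,c} - F_{h,e}$, the two gradient (supporting-hyperplane) inequalities evaluated at $\phi^{n+1}$ and $\phi^n$ respectively, the identification $\mu^{n+1} = \delta_\phi F_{h,c}(\phi^{n+1}) - \delta_\phi F_{h,e}(\phi^n)$, and a final summation by parts via the dynamics $\phi^{n+1}-\phi^n = \dt\,\Delta_h\mu^{n+1}$. Your additional verification of the discrete variational derivatives and the remark that positivity (Theorem~\ref{CH-positivity}) keeps the singular terms finite are details the paper leaves implicit, but the argument is the same.
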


\begin{proof}
The following estimate is  valid:
	\begin{equation*}
	\begin{split}
F_{h}(\phi^{k+1})-F_{h}(\phi^{k})&=F_{h,c}(\phi^{k+1})-F_{h,e}(\phi^{k+1})-F_{h,c}(\phi^{k})+F_{h,e}(\phi^{k})
	\\
&=F_{h,c}(\phi^{k+1})-F_{h,c}(\phi^{k})-(F_{h,e}(\phi^{k+1})-F_{h,e}(\phi^{k}))
	\\
&\leq\langle\delta_{\phi}F_{h,c}(\phi^{k+1}),\phi^{k+1}-\phi^{k}\rangle_{\Omega} -\langle\delta_{\phi}F_{h,e}(\phi^{k}),\phi^{k+1}-\phi^{k}\rangle_{\Omega}
	\\
&=\langle\delta_{\phi}F_{h,c}(\phi^{k+1})-\delta_{\phi}F_{h,e}(\phi^{k}),\phi^{k+1}-\phi^{k}\rangle_{\Omega}
	\\
&=\langle\mu^{k+1},\phi^{k+1}-\phi^{k}\rangle_{\Omega}
	\\
&=\dt \langle\mu^{k+1},\Delta_{h}\mu^{k+1}\rangle_{\Omega} =-\dt\|\nabla_{h}\mu^{k+1}\|_{2}^{2} \le 0 .
	\end{split}
	\end{equation*}
	\end{proof}

\subsection {The $\ell^{\infty}(0,T; H_h^{-1}) \cap \ell^{2}(0,T; H_h^1)$ convergence analysis}

By $\Phi$ we denote the exact solution for the PDE~\eqref{equation-AG}. Define $\Phi_N ( \, \cdot \, ,t):=\mathcal{P}_{N}\Phi( \, \cdot \, ,t)$, where $\mathcal{P}_{N}:L^2(\Omega) \to\mathbb{T}_K^d$ is the (spatial) Fourier projection into the space of trigonometric polynomials of degree up to and including $K$, and $N=2K+1$. In particular, in 3D
	\[
\mathcal{P}_{N}f({\bf x}) = \sum_{\nrm{{\bf k}}_1\le K}\hat{f}_{{\bf k}}\exp\left(\frac{2\pi \mathfrak{i}}{L} \, {\bf k}\cdot{\bf x}\right), \quad \hat{f}_{{\bf k}} = \frac{1}{L^3}\int_{\Omega} f(x) \exp\left(-\frac{2\pi \mathfrak{i}}{L} \, {\bf k}\cdot{\bf x}\right) d{\bf x}.
	\]
The following projection approximation is standard: if $\Phi\in L^{\infty}(0,T;H_{\rm per}^{\ell}(\Omega))$, for some $\ell\in \mathbb{N}$,
	\begin{equation}
\|\Phi_{N}-\Phi\|_{L^{\infty}(0,T;H^{k})}\leq Ch^{\ell-k}\|\Phi\|_{L^{\infty}(0,T;H^{\ell})}, \quad \forall\, k: \, 0\leq k\leq \ell.
	\end{equation}
By $\Phi_{N}^{m},\Phi^{m}$ we denote $\Phi_{N}( \, \cdot \, ,t_{m}),$ and $\Phi( \, \cdot \, ,t_{m})$, respectively, with $t_{m}=m\cdot\dt$. Since $\Phi_{N}\in\mathbb{T}_K^d$, the mass conservative  property is available at the time-discrete level:
	\begin{equation}
\overline{\Phi_{N}^{m}}=\frac{1}{L^3}\int_{\Omega}\Phi_{N}( \, \cdot \, ,t_{m}) \, d{\bf x}
=\frac{1}{L^3}\int_{\Omega}\Phi_{N}( \, \cdot \, ,t_{m-1}) \, d{\bf x}=\overline{\Phi_{N}^{m-1}},\quad \forall \, m\in\mathbb{N}.
	\end{equation}
As shown before, we know that the numerical solution~\eqref{scheme-AG-1} is mass conservative at the fully discrete level:
	\begin{equation}
\overline{\phi^{m}}=\overline{\phi^{m-1}},\quad \forall \, m\in\mathbb{N}.
	\end{equation}
	
Define the grid projection operator, $\mathcal{P}_{h}: C^0_{\rm per}(\Omega)\to \mathcal{C}_{\rm per}$, by $\mathcal{P}_{h}f_{i,j,k} =  f(p_i,p_j,p_k)$, for all $f\in C^0_{\rm per}(\Omega)$. For the initial data, we take
	\begin{equation}
\phi^{0}=\mathcal{P}_{h}\Phi_{N}( \, \cdot \, ,t=0).
	\label{initial data-1}
	\end{equation}
The error grid function is defined as
	\begin{equation}
\tilde{\phi}_{m}:=\mathcal{P}_{h}\Phi^{m}_{N}-\phi^{m}, \quad \forall \, m\in\{0,1,2,3,\cdots\}.
	\end{equation}
Because the initial data are chosen using composition of operators $\mathcal{P}_h\mathcal{P}_N$, it is not hard to show that
	\[
\overline{\tilde{\phi}_{m}} := \frac{1}{L^3}\ciptwo{\tilde{\phi}_{m}}{1} = 0, \quad  \forall \, m\in\{0,1,2,3, \cdots \},
	\]
so that the discrete norm $\| \, \cdot \, \|_{-1,h}$ is well defined for the error grid function.

In the following theorem, the convergence result for the numerical scheme~\eqref{scheme-AG-1} is stated.

	\begin{thm}
	\label{thm:convergence-1st}
Given initial data $\Phi( \, \cdot \, ,t=0)\in C^{6}_{\rm per}(\Omega),$ suppose the solution of the PDE~\eqref{equation-AG}, $\Phi$, is in the regularity class
	 \[
\mathcal{R}:=H^{2}(0,T;C_{\rm per}(\Omega)) \cap H^1 (0,T;C^{2}_{\rm per} (\Omega)) \cap L^{\infty}(0,T;C^{6}_{\rm per}(\Omega)).
	\]
Then, provided $\dt$ and $h$ are sufficiently small, for all positive integers n, such that $t_{n}\leq T,$ we have the following convergence estimate for the numerical solution
	\begin{equation}
\|\tilde{\phi}^{n}\|_{-1,h}+\left(\varepsilon^{2} \dt \sum_{m=1}^{n}\|\nabla_{h}\tilde{\phi}^{m}\|_{2}^{2}\right)^{\frac{1}{2}}\leq C(\dt+h^{2}),
	\end{equation}
where $C>0$ is independent of $n$, $\dt$ and $h$.
	\end{thm}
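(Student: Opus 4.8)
The plan is to combine a consistency analysis with a stability estimate carried out in the discrete $H^{-1}$ inner product, which is the natural setting for this $H^{-1}$ gradient flow. First I would establish consistency of the projected exact solution. Using the regularity $\Phi\in\mathcal{R}$ together with the fact that the continuous solution is uniformly separated from the singular value $0$ (so that $\Phi^{-9}$, $\Phi^{-3}$ and the derivatives needed for Taylor expansion are smooth and bounded along $\Phi$), I would show that $\mathcal{P}_h\Phi_N$ satisfies the scheme~\eqref{scheme-AG-1} up to a truncation error $\tau^{n+1}$ with $\| \tau^{n+1} \|_{-1,h} \le C(\dt+h^2)$. The temporal order is the backward-Euler one (the explicit, old-time evaluation of the $\Phi^{-3}$ term being itself $O(\dt)$ consistent), while the $O(h^2)$ spatial order follows from the Fourier-projection estimate quoted above and the second-order truncation of $\Delta_h$ on smooth data. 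Subtracting the scheme from this identity produces the error equation $\frac{\tilde\phi^{n+1}-\tilde\phi^n}{\dt} = \Delta_h\bigl[ -\tfrac83\mathcal{N}_9^{n+1} + \tfrac83\mathcal{N}_3^{n} - \varepsilon^2\Delta_h\tilde\phi^{n+1} \bigr] + \tau^{n+1}$, with the nonlinear error terms $\mathcal{N}_9^{n+1} := (\mathcal{P}_h\Phi_N^{n+1})^{-9} - (\phi^{n+1})^{-9}$ and $\mathcal{N}_3^{n} := (\mathcal{P}_h\Phi_N^{n})^{-3} - (\phi^{n})^{-3}$.

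Since each $\tilde\phi^m$ has zero mean, I would then test the error equation with $(-\Delta_h)^{-1}\tilde\phi^{n+1}$. The temporal term telescopes, $\frac{1}{\dt}\langle \tilde\phi^{n+1}-\tilde\phi^n, \tilde\phi^{n+1}\rangle_{-1,h} \ge \frac{1}{2\dt}\bigl( \| \tilde\phi^{n+1} \|_{-1,h}^2 - \| \tilde\phi^n \|_{-1,h}^2 \bigr)$; the surface-diffusion term yields the dissipation $\varepsilon^2 \| \nabla_h\tilde\phi^{n+1} \|_2^2$; and the implicit singular term is a gift, since $x\mapsto x^{-9}$ is decreasing on $(0,\infty)$, so $-\tfrac83\langle \mathcal{N}_9^{n+1}, \tilde\phi^{n+1}\rangle \ge 0$ pointwise and may simply be discarded to the favorable side. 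This monotonicity is exactly the structural benefit of treating the convex part of the potential implicitly, and it costs nothing: no bound on $\phi^{n+1}$ is needed here. The consistency term is handled by $\langle \tau^{n+1}, \tilde\phi^{n+1}\rangle_{-1,h} \le \tfrac12 \| \tau^{n+1} \|_{-1,h}^2 + \tfrac12 \| \tilde\phi^{n+1} \|_{-1,h}^2$.

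The only dangerous contribution is the explicit, concave term $-\tfrac83\langle \mathcal{N}_3^{n}, \tilde\phi^{n+1}\rangle$. Writing $\mathcal{N}_3^{n} = -3\,\xi_n^{-4}\,\tilde\phi^n$ for an intermediate value $\xi_n$ between $\mathcal{P}_h\Phi_N^{n}$ and $\phi^n$, I would bound it by $C \| \tilde\phi^n \|_2 \| \tilde\phi^{n+1} \|_2$ and then absorb it via the interpolation $\| \tilde\phi \|_2^2 \le \| \tilde\phi \|_{-1,h} \| \nabla_h\tilde\phi \|_2$ (with Lemma~\ref{lem:L4 est} available should a sharper control of the nonlinear difference be required) followed by Young's inequality, so that the $\| \nabla_h\tilde\phi^{n+1} \|_2^2$ part is swallowed by the diffusion dissipation and only $\| \cdot \|_{-1,h}^2$ terms survive.

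The main obstacle is precisely the prefactor $\xi_n^{-4}$: controlling it demands a lower bound on $\xi_n$, hence on $\phi^n$, that is uniform in $n$, $h$ and $\dt$, which Theorem~\ref{CH-positivity} does not supply, its bound $\delta$ degenerating as $h,\dt\to0$. I would close this gap by a bootstrap induction: assuming the asserted error bound up to step $n$, I would convert it into an $L^\infty$ bound $\| \tilde\phi^n \|_\infty \le \epsilon_0/2$ (using an auxiliary higher-norm control of the error together with the discrete Sobolev embedding, which is uniform in $h$ for $d\le3$); combined with the uniform separation $\mathcal{P}_h\Phi_N^{n}\ge \epsilon_0$ inherited from the continuous solution, this forces $\phi^n = \mathcal{P}_h\Phi_N^{n} - \tilde\phi^n \ge \epsilon_0/2$, whence $\xi_n \ge \epsilon_0/2$ and $\xi_n^{-4}\le C$ with $C$ independent of $n,h,\dt$. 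With uniform positivity secured, a discrete Gronwall inequality applied to $\| \tilde\phi^m \|_{-1,h}^2$, summing the $\ell^2(0,T;H_h^1)$ dissipation, yields the $O(\dt+h^2)$ estimate and at the same time recovers the a priori assumption at step $n+1$, thereby completing the induction.
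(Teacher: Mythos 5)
Your proposal follows the paper's own argument through the consistency estimate, the choice of test function $(-\Delta_h)^{-1}\tilde{\phi}^{n+1}$, the favorable sign of the implicit convex term, and the mean value theorem expansion $(\Phi_N^n)^{-3}-(\phi^n)^{-3}=-3(\xi^n)^{-4}\tilde{\phi}^n$; up to that point the two proofs coincide. The genuine gap is in how you control the prefactor $(\xi^n)^{-4}$. You demand a pointwise lower bound on $\phi^n$, uniform in $n$, $h$, $\dt$, and propose to recover it by a bootstrap induction that converts the error estimate into an $L^\infty$ bound $\|\tilde{\phi}^n\|_\infty\le\epsilon_0/2$. This step does not go through as described: the ``discrete Sobolev embedding'' you invoke does not exist, since $H^1\not\hookrightarrow L^\infty$ in dimensions two and three (the paper's analysis is meant to cover up to 3D), and the norms the theorem actually controls, $\ell^\infty(0,T;H_h^{-1})$ and $\ell^2(0,T;H_h^1)$, cannot be upgraded to $L^\infty$ without either an auxiliary $H^2$ error analysis --- which you do not supply, and which is a substantial separate undertaking for this singular nonlinearity --- or inverse inequalities that impose mesh-ratio restrictions and in 3D fail to close at first-order temporal accuracy (e.g., with $\dt\sim h^2$ the resulting per-step $L^\infty$ error bound is only $O(1)$, not small). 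Note also that the paper's Remark~3.2 explicitly concedes that a pointwise separation from $0$ uniform in the discretization parameters is \emph{not} available from the numerical analysis alone, which is precisely why its proof is engineered to avoid needing one.

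The device you missed is that no pointwise bound on $\xi^n$ is required. By H\"older with exponents $(2,4,4)$,
\begin{equation*}
\bigl\langle (\xi^n)^{-4}\tilde{\phi}^n , \tilde{\phi}^{n+1} \bigr\rangle_\Omega
\le \| (\xi^n)^{-4} \|_2 \, \| \tilde{\phi}^n \|_4 \, \| \tilde{\phi}^{n+1} \|_4 ,
\qquad
\| (\xi^n)^{-4} \|_2 \le \max \left( \| (\Phi_N^n)^{-1} \|_8^4 , \| (\phi^n)^{-1} \|_8^4 \right),
\end{equation*}
since $\xi^n$ lies pointwise between $\Phi_N^n$ and $\phi^n$. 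The factor $\| (\phi^n)^{-1} \|_8$ is bounded uniformly in $n$, $h$, $\dt$ by the \emph{unconditional energy stability} (Theorem~\ref{thm:energy stability-1st}): $F_h(\phi^n)\le F_h(\phi^0)$ controls $\langle (\phi^n)^{-8},1\rangle_\Omega$ after absorbing $-\frac43\langle(\phi^n)^{-2},1\rangle_\Omega$ via Young's inequality. The two $\|\cdot\|_4$ factors on the errors are then handled exactly by Lemma~\ref{lem:L4 est} together with Young's inequality, so that the gradient contributions are absorbed into the $\varepsilon^2\|\nabla_h\tilde{\phi}^{n+1}\|_2^2$ dissipation (with the $\|\nabla_h\tilde{\phi}^n\|_2^2$ piece summing telescopically), leaving only $\|\cdot\|_{-1,h}^2$ terms for the discrete Gronwall inequality under the mild step-size restriction of Remark~\ref{rem: Gronwall}. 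You cite Lemma~\ref{lem:L4 est} only as an optional refinement, but it is in fact the essential ingredient: putting the error factors in $\|\cdot\|_4$ is what permits the prefactor to be measured in the weak $L^2$ norm (i.e., $(\phi^n)^{-1}$ in $L^8$), which is exactly the quantity the energy stability furnishes. This makes the proof unconditional, non-circular, and free of any induction on positivity.
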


\begin{proof}
A careful consistency analysis indicates the following truncation error estimate:
\begin{equation}
\frac{\Phi^{n+1}_{N}-\Phi^{n}_{N}}{\dt} = \Delta_{h} \Big( -\frac{8}{3}(\Phi^{n+1}_{N})^{-9}+\frac{8}{3}(\Phi^{n}_{N})^{-3} -\varepsilon^2 \Delta_{h} \Phi^{n+1}_{N} \Big) + \tau^{n},
  \label{AG-consistency-1}
\end{equation}
with $\|\tau^{n}\|_{-1,h}\le C(\dt+h^{2}).$ Observe that in equation~\eqref{AG-consistency-1}, and from this point forward, we drop the operator $\mathcal{P}_{h}$, which should appear in front of $\Phi_{N}$, for simplicity. In turn, subtracting the numerical scheme~\eqref{scheme-AG-1} from~\eqref{AG-consistency-1} gives
\begin{equation}
\frac{\tilde{\phi}^{n+1} - \tilde{\phi}^n}{\dt} =\Delta_h
\left( -\frac{8}{3} (\Phi^{n+1}_N)^{-9} + \frac{8}{3} (\phi^{n+1})^{-9}
+ \frac{8}{3} (\Phi^{n}_N)^{-3} - \frac{8}{3} (\phi^{n})^{-3}
- \varepsilon^2 \Delta_h \tilde{\phi}^{n+1}\right) + \tau^{n} .  \label{AG-consistency-2}
\end{equation}
Taking the discrete inner product of~\eqref{AG-consistency-2} with $2(-\Delta_{h})^{-1} \tilde{\phi}^{n+1}$ yields
	\begin{equation}
	\label{AG-convergence-1}
	\begin{split}
& \langle\tilde{\phi}^{n+1}-\tilde{\phi}^{n}, 2(-\Delta_{h})^{-1} \tilde{\phi}^{n+1} \rangle_{\Omega} + 2 \varepsilon^2 \dt \langle \Delta_h^2 \tilde{\phi}^{n+1},
 (-\Delta_h)^{-1} \tilde{\phi}^{n+1} \rangle_{\Omega}  \\
& \quad =  \frac{16}{3} \dt \langle (\Phi^{n+1}_N)^{-9} - (\Phi^{n}_N)^{-3} - (\phi^{n+1})^{-9}
 +(\phi^{n})^{-3} , \tilde{\phi}^{n+1} \rangle_{\Omega}
 + 2 \dt \langle \tau^{n},  \tilde{\phi}^{n+1}\rangle_{-1,h} .
	\end{split}
	\end{equation}
For the time difference, the following identity is valid:
	\begin{equation}
 2 \langle\tilde{\phi}^{n+1}-\tilde{\phi}^{n},  (-\Delta_{h})^{-1} \tilde{\phi}^{n+1}\rangle_{\Omega} =\|\tilde{\phi}^{n+1}\|_{-1,h}^{2}-\|\tilde{\phi}^{n}\|_{-1,h}^{2} +\|\tilde{\phi}^{n+1}-\tilde{\phi}^{n}\|_{-1,h}^{2} .
	\label{AG-convergence-2}
	\end{equation}
The estimate for the surface diffusion term is straightforward:
\begin{equation}
\langle \Delta_{h}^{2} \tilde{\phi}^{n+1},
 (-\Delta_{h})^{-1} \tilde{\phi}^{n+1}\rangle_{\Omega}
 = \| \nabla_h \tilde{\phi}^{n+1} \|_{2}^{2} .
   \label{AG-convergence-3}
\end{equation}
For the nonlinear inner product, the convex nature of $\frac{1}{3}\phi^{-8}$ implies the following result:
\begin{equation}
  \langle (\Phi^{n+1}_N)^{-9} - (\phi^{n+1})^{-9},\tilde{\phi}^{n+1} \rangle_{\Omega} \le 0.
  \label{AG-convergence-4}
\end{equation}
For the concave nonlinear inner product, we begin with an application of mean value theorem:
	\begin{equation}
(\Phi^{n}_{N})^{-3} - (\phi^{n})^{-3} = -3 ( \xi^n )^{-4} \tilde{\phi}^n ,
	\label{AG-convergence-5-1}
	\end{equation}
where $\xi^n\in\mathcal{C}_{\rm per}$ is a grid function that is between $\Phi^{n}_{N}$ and $\phi^n$ in a point-wise sense. Moreover, we see that
\begin{eqnarray}
   \| ( \xi^n )^{-4} \|_2 \le \max \left( \| (\Phi^{n}_N)^{-1} \|_8^4 , \| ( \phi^n )^{-1} \|_8^4 \right)
   \le C^* ,  \label{AG-convergence-5-2}
\end{eqnarray}
which is based on the smoothness of the projection solution $\Phi_N$, as well as the $\| \cdot \|_8$ bound of the numerical solution $(\phi^n)^{-1}$, coming from the energy stability. Then we arrive at
	\begin{align}
-\frac{16}{3} \langle(\Phi^{n}_{N})^{-3}-(\phi^{n})^{-3},\tilde{\phi}^{n+1}\rangle_{\Omega} &\le 16 \| ( \xi^n )^{-4} \|_2 \cdot \| \tilde{\phi}^n \|_4 \cdot \| \tilde{\phi}^{n+1} \|_4
   	\nonumber
	\\
& \le 16 C^*  \| \tilde{\phi}^n \|_4 \cdot \| \tilde{\phi}^{n+1} \|_4 \le 8 C^* \left( \| \tilde{\phi}^n \|_4^2 + \| \tilde{\phi}^{n+1} \|_4^2 \right) .
    \label{AG-convergence-5-3}
	\end{align}
Meanwhile, for the $\| \, \cdot \, \|_4$ norm, an application of Lemma~\ref{lem:L4 est} indicates that
	\begin{equation}
\| \tilde{\phi}^k \|_4^2 \le C \| \tilde{\phi}^k \|_{-1,h}^\frac14  \cdot \| \nabla_h \tilde{\phi}^k \|_2^\frac74 \le \tilde{C}_2 \varepsilon^{-2} \| \tilde{\phi}^k \|_{-1,h}^2 + \frac{\varepsilon^2}{16 C^*} \| \nabla_h \tilde{\phi}^k \|_2^2 ,  \quad k = n, n+1 ,
  	\label{AG-convergence-5-4}
	\end{equation}
where  Young's inequality has been applied. Then we get
	\begin{align}
   -\frac{16}{3} \langle(\Phi^{n}_{N})^{-3}-(\phi^{n})^{-3},\tilde{\phi}^{n+1}\rangle_{\Omega}
    &\le 8 C^* \tilde{C}_2 \varepsilon^{-2} \left( \| \tilde{\phi}^{n+1} \|_{-1,h}^2
    + \| \tilde{\phi}^n \|_{-1,h}^2 \right)
	\nonumber
	\\
& \quad + \frac{\varepsilon^2}{2} \left( \| \nabla_h \tilde{\phi}^{n+1} \|_2^2
     + \| \nabla_h \tilde{\phi}^n \|_2^2  \right) .
    \label{AG-convergence-5-5}
	\end{align}
The term associated with the truncation error can be controlled by the standard Cauchy inequality:
\begin{equation}
\langle\tau^{n},\tilde{\phi}^{n+1}\rangle_{-1,h}
 \le  \|\tau^{n}\|_{-1,h} \cdot \|\tilde{\phi}^{n+1}\|_{-1,h}
\le \frac12 \left( \|\tau^{n}\|_{-1,h}^{2}+\|\tilde{\phi}^{n+1}\|_{-1,h}^{2} \right) .
 \label{AG-convergence-6}
\end{equation}
Subsequently, a substitution of~\eqref{AG-convergence-2}-\eqref{AG-convergence-6}  into~\eqref{AG-convergence-1} yields
	\begin{equation}  \label{AG-convergence-7}
	\begin{split}
& \|\tilde{\phi}^{n+1}\|_{-1,h}^{2}-\|\tilde{\phi}^{n}\|_{-1,h}^{2} + \frac32 \dt \varepsilon^2 \| \nabla_{h} \tilde{\phi}^{n+1}\|_{2}^{2}
	\\
& \quad  \le  \dt \|\tau^{n}\|_{-1,h}^{2} + ( 8 C^* \tilde{C}_2 \varepsilon^{-2} + 1) \dt \left( \| \tilde{\phi}^{n+1} \|_{-1,h}^2  + \| \tilde{\phi}^n \|_{-1,h}^2 \right) + \frac{\varepsilon^2}{2} \dt \| \nabla_h \tilde{\phi}^n \|_2^2 .
	\end{split}
	\end{equation}
Finally, an application of a discrete Gronwall inequality results in the desired convergence estimate:
\begin{equation}
\begin{split}
\|\tilde{\phi}^{n+1}\|_{-1,h}
+\left( \varepsilon^2 \dt \sum_{k=0}^{n+1}\|\nabla_{h}\tilde{\phi}^{m}\|_{2}^{2}\right)^{\frac{1}{2}}
\leq C(\dt+h^{2}),
\end{split}
\end{equation}
where $C>0$ is independent of $\dt$, $h$ and $n$. This completes the proof of Theorem~\ref{thm:convergence-1st}.
\end{proof}

\begin{rem}   \label{rem: Gronwall}
It is observed that, a constant in the form of $8 C^* \tilde{C}_2 \varepsilon^{-2}$ has appeared in the coefficient of $\| \tilde{\phi}^{n+1} \|_{-1,h}^2$ on the right hand side of~\eqref{AG-convergence-7}. As a result, a constraint for the time step size, namely in the form of $C^* \tilde{C}_2 \varepsilon^{-2} \dt \le \frac{1}{32}$, is needed to obtain an inequality in the following format
	\begin{equation*}
	\begin{split}
& \|\tilde{\phi}^{n+1}\|_{-1,h}^{2} +  \varepsilon^2 \dt \sum_{k=1}^{n+1}\| \nabla_{h} \tilde{\phi}^{k}\|_{2}^{2}
\le  \dt \sum_{k=0}^n \|\tau^{n}\|_{-1,h}^{2} + ( C C^* \tilde{C}_2 \varepsilon^{-2} + 1) \dt \sum_{k=0}^n \| \tilde{\phi}^k \|_{-1,h}^2  ,
	\end{split}
	\end{equation*}
so that a discrete Gronwall inequality could be effectively applied. In fact, such a constraint could be viewed as the condition, ``provided $\dt$ and $h$ are sufficiently small", stated in Theorem~\ref{thm:convergence-1st}.
\end{rem}

	\section {A second order numerical scheme}
	\label{sec:2nd scheme}

In the first order scheme~\eqref{scheme-AG-1}, the nonlinear concave term is treated explicitly. This approach may lead to a difficulty to derive a second order accurate, energy stable scheme. Instead, we make use of an alternate convexity analysis. The following preliminary estimate is needed.
	\begin{lem}
	\label{lem:convexity-0}
For $x>0$, the function $f(x)=\frac{1}{3}x^{-8}-\frac{4}{3}x^{-2}+\frac{4}{3}A_{0}x^{2}$ is convex, provided $A_{0} \ge A_0^\star := \frac{9}{5}(\frac{2}{15})^{\frac{2}{3}}$.
	\end{lem}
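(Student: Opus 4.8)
The plan is to establish convexity directly, by showing $f''(x)\ge 0$ for all $x>0$. First I would differentiate twice: $f'(x)=-\frac{8}{3}x^{-9}+\frac{8}{3}x^{-3}+\frac{8}{3}A_0 x$ and then $f''(x)=24\,x^{-10}-8\,x^{-4}+\frac{8}{3}A_0=\frac{8}{3}\bigl(9\,x^{-10}-3\,x^{-4}+A_0\bigr)$. Since the prefactor $\frac{8}{3}$ is positive, convexity of $f$ on $(0,\infty)$ is equivalent to the nonnegativity of $g(x):=9\,x^{-10}-3\,x^{-4}+A_0$ there.

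Next I would reduce to a one-variable minimization via the substitution $u:=x^{-2}>0$, which turns $g$ into the polynomial $p(u):=9u^5-3u^2+A_0$; as $x$ ranges over $(0,\infty)$, so does $u$. It therefore suffices to bound below the $A_0$-free part $q(u):=9u^5-3u^2$ on $(0,\infty)$ and then add $A_0$.

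Then I would locate the global minimum of $q$. Solving $q'(u)=45u^4-6u=3u(15u^3-2)=0$ and discarding $u=0$ leaves the unique positive critical point $u_\star=(\frac{2}{15})^{1/3}$, i.e.\ $u_\star^3=\frac{2}{15}$; it is a minimizer since $q''(u_\star)=180u_\star^3-6=24-6=18>0$ (and $q\to+\infty$ as $u\to\infty$). Using $u_\star^5=u_\star^3\cdot u_\star^2=\frac{2}{15}u_\star^2$ I would simplify $q(u_\star)=9\cdot\frac{2}{15}u_\star^2-3u_\star^2=\bigl(\frac65-3\bigr)u_\star^2=-\frac{9}{5}u_\star^2=-\frac{9}{5}\bigl(\frac{2}{15}\bigr)^{2/3}=-A_0^\star$. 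Hence $p(u)\ge A_0-A_0^\star$ for every $u>0$, so $f''(x)\ge\frac{8}{3}(A_0-A_0^\star)\ge 0$ whenever $A_0\ge A_0^\star$, which is exactly the claim.

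I do not expect a genuine obstacle here, as the argument is elementary calculus. The only mildly delicate point is the algebraic simplification that makes the minimum value of $q$ collapse precisely to $-A_0^\star$, which hinges on the identity $u_\star^5=\frac{2}{15}u_\star^2$ at the critical point; verifying the second-derivative sign $q''(u_\star)>0$ confirms this is the minimum rather than a maximum and completes the reasoning.
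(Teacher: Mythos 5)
Your proof is correct. It shares the paper's first step --- computing $f''(x)=\frac{8}{3}\bigl(9x^{-10}-3x^{-4}+A_{0}\bigr)$ and reducing convexity to nonnegativity of the bracket --- but handles the resulting scalar inequality differently. The paper dispatches it with a weighted Young inequality, $y\leq 3y^{5/2}+\frac{1}{3}A_{0}^{\star}$ for all $y>0$, applied with $y=x^{-4}$; this is quick but presupposes the right exponents and weights, and the paper does not exhibit where the constant $\frac{1}{3}A_{0}^{\star}$ comes from. You instead substitute $u=x^{-2}$ and minimize $q(u)=9u^{5}-3u^{2}$ explicitly, finding the unique positive critical point $u_{\star}=\bigl(\frac{2}{15}\bigr)^{1/3}$ and the value $q(u_{\star})=-\frac{9}{5}\bigl(\frac{2}{15}\bigr)^{2/3}=-A_{0}^{\star}$. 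The two routes prove literally the same inequality (your statement $3u^{2}-9u^{5}\le A_{0}^{\star}$ is the paper's Young inequality with $y=u^{2}$, multiplied by $3$), but yours is self-contained elementary calculus and, as a bonus, shows that $A_{0}^{\star}$ is sharp: it equals $\max_{u>0}\bigl(3u^{2}-9u^{5}\bigr)$, so the threshold in the lemma cannot be lowered within this line of argument. One small point of rigor: to conclude that $u_{\star}$ is the \emph{global} minimizer (not merely a local one), the cleanest statement is that the factorization $q'(u)=3u(15u^{3}-2)$ gives $q'<0$ on $(0,u_{\star})$ and $q'>0$ on $(u_{\star},\infty)$, so $q$ decreases then increases; your appeal to $q''(u_{\star})>0$ together with the behavior at infinity gestures at this, but the sign analysis of $q'$ closes it completely.
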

	\begin{proof}
A direct calculation gives $f'' (x)=\frac{8}{3}(9x^{-10}-3x^{-4}+A_{0})$. Meanwhile, a careful application of Young inequality reveals that
	\begin{equation*}
y\leq3y^{\frac{5}{2}}+\frac{1}{3}A_{0}^\star,\quad \forall \, y>0.
	\end{equation*}
By setting $y=x^{-4},$ we conclude that $f^{''}(x)\geq0$, for any $x>0$. This, in turn, implies that $f(x)$ is convex for $x>0$.
	\end{proof}

As a result, we obtain the following alternate decomposition of $F(\phi)$:
	\[
F(\phi) =F_{c}(\phi)-F_{e}(\phi),
	\]
where
	\begin{equation*}
F_{c}(\phi)=\int_{\Omega} \left( \frac{1}{3}\phi^{-8} + \frac{4}{3}A_{0} \phi^{2} -\frac{4}{3}\phi^{-2} + \frac{\varepsilon^{2}}{2} | \nabla\phi | ^{2} \right) d \bx ,  \quad	 F_{e}(\phi)=\int_{\Omega} \, \frac{4}{3}A_{0}\phi^{2} d \bx .
	\end{equation*}

We propose the following second order scheme for the wetting and droplet coarsening equation: given $\phi^{n-1},\phi^{n}\in\mathcal{C}_{\rm per},$ find $\phi^{n+1}, \mu^{n+1}\in\mathcal{C}_{\rm per},$ such that
	\begin{align}
\frac{\frac{3}{2}\phi^{n+1}-2\phi^{n}+\frac{1}{2}\phi^{n-1}}{\dt} & = \Delta_h \mu^{n+1} ,
	\label{BDF2-AG-1}
	\\
\mu^{n+1} & =-\frac{8}{3}((\phi^{n+1})^{-9} - (\phi^{n+1})^{-3}) +\frac{8}{3}A_{0}(\phi^{n+1}-\check{\phi}^{n+1})
	\nonumber
	\\
& \quad  - A \dt \Delta_h (\phi^{n+1}-\phi^{n}) - \varepsilon^2 \Delta_h \phi^{n+1},
	\label{scheme-mu-BDF2-1}
	\end{align}
where $\check{\phi}^{n+1} :=2\phi^{n}-\phi^{n-1}$ and $A\ge 0$ is a stabilization parameter to be determined. The unique solvability and positivity-preserving properties for the second order numerical scheme are established in the following theorem. The initial data, $\phi^0,\phi^{-1}\in \mathcal{C}_{\rm per}$, will be specified later.

	\begin{thm}
	\label{CH-BDF2-positivity}
Given $\phi^{k} \in \mathcal{C}_{\rm per}$, with $\| \phi^k \|_\infty \le M_h$, $k=n, n-1$, for some $M_h >0$,  and $\overline{\phi^{n}}=\overline{\phi^{n-1}}= \beta_0$, there exists a unique solution $\phi^{n+1}\in\mathcal{C}_{\rm per}$ to \eqref{BDF2-AG-1}, with $\phi^{n+1}-\phi^n\in\mathring{\mathcal{C}}_{\rm per}$, and $\phi^{n+1} > 0$ at a point-wise level.
	\end{thm}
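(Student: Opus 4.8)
The plan is to mirror the variational argument of Theorem~\ref{CH-positivity}, now built on the alternate convex splitting of Lemma~\ref{lem:convexity-0}. First I would dispose of mass conservation: testing \eqref{BDF2-AG-1} against the constant $1$ and using summation by parts (Lemma~\ref{L1}) gives $\tfrac32\overline{\phi^{n+1}}-2\overline{\phi^n}+\tfrac12\overline{\phi^{n-1}}=0$, so that $\overline{\phi^{n+1}}=\beta_0$ and $\phi^{n+1}-\phi^n\in\mathring{\mathcal{C}}_{\rm per}$. Consequently the left-hand side of \eqref{BDF2-AG-1} has zero mean, $(-\Delta_h)^{-1}$ applies to it, and I restrict the search to solutions satisfying this constraint.

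Next I would realize the scheme as the Euler--Lagrange equation of a strictly convex functional. Eliminating $\mu^{n+1}$ through $(-\Delta_h)^{-1}$, the solution of \eqref{BDF2-AG-1}--\eqref{scheme-mu-BDF2-1} is precisely the minimizer, over the affine positivity-constrained analog of \eqref{def-V_h}, of
\begin{equation*}
\begin{split}
\mathcal{J}(\phi) &= \frac{3}{4\dt}\Big\|\phi-\big(\tfrac43\phi^n-\tfrac13\phi^{n-1}\big)\Big\|_{-1,h}^2 + \Big\langle \tfrac13\phi^{-8}-\tfrac43\phi^{-2}+\tfrac43 A_0\phi^2,\,1\Big\rangle_\Omega \\
&\quad -\tfrac83 A_0\langle\check{\phi}^{n+1},\phi\rangle_\Omega + \frac{\varepsilon^2}{2}\|\nabla_h\phi\|_2^2 + \frac{A\dt}{2}\|\nabla_h(\phi-\phi^n)\|_2^2 .
\end{split}
\end{equation*}
A direct computation of $d_s\mathcal{J}(\phi+s\psi)|_{s=0}$ over mean-zero directions $\psi$ recovers \eqref{BDF2-AG-1}--\eqref{scheme-mu-BDF2-1}: the $\|\cdot\|_{-1,h}^2$ term produces the BDF2 stencil $\tfrac1{\dt}(-\Delta_h)^{-1}(\tfrac32\phi^{n+1}-2\phi^n+\tfrac12\phi^{n-1})$, the potential integrand yields $-\tfrac83(\phi^{-9}-\phi^{-3})+\tfrac83 A_0\phi$, and the remaining terms give the explicit extrapolation, the Douglas--Dupont regularization, and the surface-diffusion contributions. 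Strict convexity of $\mathcal{J}$ then follows because the potential integrand is convex by Lemma~\ref{lem:convexity-0} (as $A_0\ge A_0^\star$), the two squared-gradient terms are convex, and $\|\cdot\|_{-1,h}^2$ is strictly convex on the mean-zero subspace.

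The heart of the proof --- and the main obstacle --- is positivity. I would restrict $\mathcal{J}$ to the compact convex set $V_{\delta,h}=\{\phi:\delta\le\phi\le M_h,\ \overline\phi=\beta_0\}$, on which a minimizer $\phi^\star$ exists, and show it cannot sit on the left boundary once $\delta$ is small. Assume $\phi^\star_{\vec{\alpha_0}}=\delta$ at a global minimum and let $\vec{\alpha_1}$ be a global maximum, so $\phi^\star_{\vec{\alpha_1}}\ge\beta_0$; then test $d_s\mathcal{J}(\phi^\star+s\psi)|_{s=0}$ with the same unit-difference direction $\psi$ used in \eqref{AG-positive-4}. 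Writing $f(x)=\tfrac13x^{-8}-\tfrac43x^{-2}+\tfrac43 A_0 x^2$ as in Lemma~\ref{lem:convexity-0}, the potential contribution equals $f'(\delta)-f'(\phi^\star_{\vec{\alpha_1}})$, and since $f'(x)=-\tfrac83x^{-9}+\tfrac83x^{-3}+\tfrac83 A_0 x\to-\infty$ as $x\to0^+$ while $f'(\phi^\star_{\vec{\alpha_1}})$ stays bounded (as $\phi^\star_{\vec{\alpha_1}}\in[\beta_0,M_h]$), this contribution tends to $-\infty$ as $\delta\to0^+$. Every remaining term is bounded by a constant $D_0=D_0(\dt,h,\varepsilon,A,A_0,M_h,\beta_0)$ independent of $\delta$: the $H^{-1}$ difference is controlled by Lemma~\ref{CH-positivity-Lem-0} after writing $\phi^\star-\tfrac43\phi^n+\tfrac13\phi^{n-1}=(\phi^\star-\phi^n)-\tfrac13(\phi^n-\phi^{n-1})$ as a combination of mean-zero differences of functions in $(0,M_h]$; the explicit extrapolation difference $-\tfrac83 A_0(\check\phi^{n+1}_{\vec{\alpha_0}}-\check\phi^{n+1}_{\vec{\alpha_1}})$ is bounded since $\|\check\phi^{n+1}\|_\infty\le 3M_h$; and the two Laplacian terms split into pieces of favorable sign, because $\phi^\star$ attains its minimum at $\vec{\alpha_0}$ and maximum at $\vec{\alpha_1}$, so that $\Delta_h\phi^\star_{\vec{\alpha_0}}\ge0\ge\Delta_h\phi^\star_{\vec{\alpha_1}}$ makes both $-\varepsilon^2(\Delta_h\phi^\star_{\vec{\alpha_0}}-\Delta_h\phi^\star_{\vec{\alpha_1}})\le0$ and $-A\dt(\Delta_h\phi^\star_{\vec{\alpha_0}}-\Delta_h\phi^\star_{\vec{\alpha_1}})\le0$, leaving only the bounded remainder $A\dt(\Delta_h\phi^n_{\vec{\alpha_0}}-\Delta_h\phi^n_{\vec{\alpha_1}})$. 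Hence $\tfrac1{h^3}d_s\mathcal{J}(\phi^\star+s\psi)|_{s=0}\le f'(\delta)+D_0<0$ for $\delta$ small, contradicting minimality; the delicate point is the bookkeeping that makes the singular $f'(\delta)$ provably dominate $D_0$ uniformly.

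Finally, since the interior minimizer avoids both constraints for $\delta$ small, it is a free critical point of $\mathcal{J}$ and thus a positive solution of \eqref{BDF2-AG-1}--\eqref{scheme-mu-BDF2-1}; the strict convexity of $\mathcal{J}$ on the convex admissible set yields uniqueness, which completes the proof.
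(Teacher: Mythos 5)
Your proposal is correct and follows essentially the same route as the paper: recasting the scheme as the minimization of a strictly convex functional (yours, written in $\phi$ with the Douglas--Dupont term grouped as $\frac{A\dt}{2}\|\nabla_h(\phi-\phi^n)\|_2^2$, agrees with the paper's functional up to an additive constant), restricting to the compact set $\mathring{V}_{\delta,h}$, ruling out a left-boundary minimizer by testing with the same two-point direction $\psi$ and letting the singular term $f'(\delta)\to-\infty$ dominate the $\dt^{-1}$, $h^{-2}$, and $M_h$ bounded terms, and concluding uniqueness from strict convexity. Your bookkeeping (the sign argument $\Delta_h\phi^\star_{\vec{\alpha_0}}\ge 0\ge\Delta_h\phi^\star_{\vec{\alpha_1}}$, the bound on $\check{\phi}^{n+1}$, and Lemma~\ref{CH-positivity-Lem-0} applied to the split BDF2 stencil) matches the paper's estimates, with your statement of the singular-term estimate being, if anything, slightly cleaner.
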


	\begin{proof}
We follow the notation in the proof of Theorem~\ref{CH-positivity}. Solving~\eqref{BDF2-AG-1} is equivalent to minimizing the discrete energy functional
	\begin{equation}
	\label{AG-BDF2-positive-1}
	\begin{split}
\mathcal{J}(\varphi) & =  \frac{1}{3 \dt} \nrm{ \frac{3}{2}(\varphi + \beta_0) - 2 \phi^n
 +\frac12 \phi^{n-1} }_{-1,h}^{2}
+\frac{1}{3} \langle ( \varphi+ \beta_0)^{-8} - 4 (\phi+ \beta_0)^{-2} , 1 \rangle_\Omega
	\\
& \ +4 A_0\langle (\varphi+ \beta_0)^{2}, 1 \rangle_{\Omega} + \frac{\varepsilon^{2}+A \dt}{2} \| \nabla_h \varphi \|_2^2 +\langle (\varphi+ \beta_0), A \dt \Delta_h \phi^n - \frac{8}{3} A_{0} \check{\phi}^{n+1} \rangle_{\Omega}.
	\end{split}
	\end{equation}
over the admissible set $\mathring{V}_{h}$, defined in~\eqref{def-V_h}.

To obtain the existence of a minimizer for $\mathcal{J}$ over $\mathring{V}_{h}$, we consider the closed domain $\mathring{V}_{\delta,h}$, defined in~\eqref{def-V_h-delta}. Since $\mathring{V}_{\delta,h}$ is a bounded, compact, and convex set in the subspace $\mathring{\mathcal{C}}_{\rm per}$, there exists a minimizer of $\mathcal{J}$ over $\mathring{V}_{\delta,h}$. We will show that such a minimizer could not occur on the left boundary, if $\delta$ is small enough.

To get a contradiction, assume that the minimizer of $\mathcal{J}$ occurs at the left boundary of $\mathring{V}_{\delta,h}$. Denote the minimizer by $\varphi^{\star}$, and suppose that $\varphi_{\vec{\alpha_{0}}}^{\star}=\delta- \beta_0$, where $\vec{\alpha_{0}} = (i_0,j_0,k_0)$. Then the grid function $\varphi^{\star}$ has a global minimum at $\vec{\alpha_{0}}$. Suppose that $\varphi^{\star}$ reaches its maximum value at the grid point $\vec{\alpha_{1}}=(i_1,j_1,k_1)$. Since $\overline{\varphi^{\star}}=0$, it is obvious that $M_h \geq\varphi_{\vec{\alpha_{1}}}^{\star} + \beta_0 \ge \beta_0$.

Since $\mathcal{J}$ is smooth over $\mathring{V}_{\delta,h},$ for any $\psi\in\mathring{\mathcal{C}}_{\rm per}$, the directional derivative is
	\begin{equation*}
	\begin{split}
d_{s}\mathcal{J}(\varphi^{\star}+s\psi)|_{s=0} & :=
  \frac{1}{\dt} \left\langle (-\Delta_{h})^{-1} \left(\frac{3}{2}(\varphi^{\star}+ \beta_0) - 2 \phi^{n}
   + \frac{1}{2} \phi^{n-1} \right), \psi \right\rangle_{\Omega}
	\\
&\quad +\frac{8}{3 }\langle ( -\varphi^{\star} + \beta_0)^{-9}
+ ( \varphi^{\star}+ \beta_0)^{-3} + A_{0} (\varphi^{\star}+ \beta_0 ), \psi \rangle_{\Omega}
	\\
&\quad  - (\varepsilon^{2}+ A \dt) \langle \Delta_{h} \varphi^{\star} , \psi \rangle_{\Omega} +\left\langle A \dt \Delta_{h} \phi^{n} - \frac{8}{3} A_{0} \check{\phi}^{n+1},
  \psi \right\rangle_{\Omega}.
	\end{split}
	\end{equation*}
As before, let us select the direction $\psi_{i,j,k}=\delta_{i,i_0}\delta_{j,j_0}\delta_{k,k_0}-\delta_{i,i_1}\delta_{j,j_1}\delta_{k,k_1}$. Then, the  derivative may be expressed as
	\begin{equation}
	\label{AG-BDF2-positive-4}
	\begin{split}
\frac{1}{h^{3}}d_{s} \mathcal{J}(\varphi^{\star}+ s \psi) |_{s=0}& := \frac{1}{\dt} (-\Delta_h)^{-1} \left(\frac{3}{2}(\varphi^{\star}_{\vec{\alpha_{0}}}+ \beta_0)
 - 2 \phi^{n}_{\vec{\alpha_{0}}} + \frac{1}{2}\phi^{n-1}_{\vec{\alpha_{0}}}\right)
 	\\
&\quad  -\frac{1}{\dt} (-\Delta_h)^{-1} \left(\frac{3}{2}(\varphi^{\star}_{\vec{\alpha_{1}}}+ \beta_0)
- 2 \phi^{n}_{\vec{\alpha_{1}}} + \frac{1}{2} \phi^{n-1}_{\vec{\alpha_{1}}}\right)
	\\
&\quad  +\frac{8}{3}\left( -(\varphi^{\star}_{\vec{\alpha_{0}}} + \beta_0)^{-9}
 + (\varphi^{\star}_{\vec{\alpha_{0}}} + \beta_0)^{-3}
 + A_{0} (\varphi^{\star}_{\vec{\alpha_{0}}}+ \beta_0) \right)
 	\\
&\quad  -\frac{8}{3} \left( -(\varphi^{\star}_{\vec{\alpha_{1}}}+ \beta_0)^{-9}
 + (\varphi^{\star}_{\vec{\alpha_{1}}} - \beta_0)^{-3}
 + A_{0}(\varphi^{\star}_{\vec{\alpha_{1}}}+ \beta_0) \right)
 	\\
&\quad  -(\varepsilon^{2} + A \dt) (\Delta_h \varphi^{\star}_{\vec{\alpha_{0}}}
- \Delta_h \varphi^{\star}_{\vec{\alpha_{1}}})
+ A \dt ( \Delta_h \phi^{n}_{\vec{\alpha_{0}}} - \Delta_h \phi^{n}_{\vec{\alpha_{1}}})
	\\
&\quad  -\frac{8}{3}A_{0} (\check{\phi}^{n+1}_{\vec{\alpha_{0}}}
- \check{\phi}^{n+1}_{\vec{\alpha_{1}}} ).
	\end{split}
	\end{equation}
The following estimates are derived
	\begin{equation}
	\label{AG-BDF2-positive-5-1}
	\begin{split}
-5 \tilde{C} M_h \dt^{-1} &\le \frac{1}{\dt} (-\Delta_h)^{-1}
 \left(\frac{3}{2} \phi^{\star}_{\vec{\alpha_{0}}}
-2 \phi^{n}_{\vec{\alpha_{0}}} + \frac{1}{2}\phi^{n-1}_{\vec{\alpha_{0}}} \right)
	\\
& \quad  - \frac{1}{\dt} (-\Delta_h)^{-1} \left( \frac{3}{2}\phi^{\star}_{\vec{\alpha_{1}}} - 2 \phi^{n}_{\vec{\alpha_{1}}} + \frac{1}{2} \phi^{n-1}_{\vec{\alpha_{1}}} \right) \le 5 \tilde{C} M_h \dt^{-1} ,
	\end{split}
	\end{equation}
\begin{equation}
 \Delta_h \varphi^{\star}_{\vec{\alpha_{0}}} \ge 0 , \quad
 \Delta_h \varphi^{\star}_{\vec{\alpha_{1}}} \le 0 ,  \label{AG-BDF2-positive-5-2}
\end{equation}
	\begin{equation}
\Delta_h \phi^{n}_{\vec{\alpha_{0}}} \le \frac{6M_{h}}{h^{2}} , \quad
 \Delta_h \phi^{n}_{\vec{\alpha_{1}}}<\frac{6M_{h}}{h^{2}} ,
 	\label{AG-BDF2-positive-5-3}
	\end{equation}
and
	\begin{equation}
-3M_{h} \le \check{\phi}^{n+1}_{\vec{\alpha_{0}}}
 - \check{\phi}^{n+1}_{\vec{\alpha_{1}}} \le 3M_{h} ,
 	\label{AG-BDF2-positive-5-4}
	\end{equation}
where we have made use of the fact that $ \| \phi^k \|_\infty <M_{h},\;k=n,n-1$, and where we have applied Lemma~\ref{CH-positivity-Lem-0}. Similar to the argument in~\eqref{AG-positive-10}, we obtain the following inequalities, based on the fact that $\varphi_{\vec{\alpha_{0}}}^{\star}+ \beta_0=\delta$, $\varphi_{\vec{\alpha_{1}}}^{\star} + \beta_0 \ge \beta_0$:
	\begin{align}
	 &
\frac{8}{3}(-(\varphi^{\star}_{\vec{\alpha_{0}}} + \beta_0)^{-9} + (\varphi^{\star}_{\vec{\alpha_{0}}} + \beta_0)^{-3} + A_{0}(\varphi^{\star}_{\vec{\alpha_{0}}}+ \beta_0) =\frac{8}{3}(-\delta^{-9} +\delta^{-3} + A_{0}\delta),
	\label{AG-BDF2-positive-6-1}
\\
  &
\frac{8}{3} ( -(\varphi^{\star}_{\vec{\alpha_{1}}}+ \beta_0)^{-9} + (\varphi^{\star}_{\vec{\alpha_{1}}} + \beta_0)^{-3} + A_{0} (\varphi^{\star}_{\vec{\alpha_{1}}} + \beta_0)) \ge \frac{8}{3} ( -\beta_0^{-9} + (\beta_0)^{-3} + A_{0} \beta_0).
	\label{AG-BDF2-positive-6-2}
	\end{align}
Subsequently, a substitution of~\eqref{AG-BDF2-positive-5-1} to \eqref{AG-BDF2-positive-6-2} into~\eqref{AG-BDF2-positive-4} yields the following bound:
	\begin{equation}
	\label{AG-BDF2-positive-7}
	\begin{split}
\frac{1}{h^{3}}d_{s}\mathcal{J} (\varphi^{\star}+s\psi) |_{s=0}
&\le \frac{8}{3}(-\delta^{-9} + \delta^{-3} + \beta_0^{-9} + A_{0} (\delta - \beta_0))
	\\
& \quad +8A_{0}M_{h} + 12M_{h} A \dt h^{-2} + 10 \tilde{C} M_h \dt^{-1}.
	\end{split}
	\end{equation}
We observe that $(\dt)^{-1}$ and $\dt h^{-2}$ are constants, since $\dt$ and $h$ are assumed fixed, though the terms become singular as $\dt, h \rightarrow0$. The rest of the analysis follows the same arguments as in the proof of Theorem~\ref{CH-positivity}; the details are left to interested readers. This finishes the proof of Theorem~\ref{CH-BDF2-positivity}.
	\end{proof}

In the following Theorem, we could prove that a modified energy stability is available for the second order $BDF$ scheme~\eqref{BDF2-AG-1}, provided that $A\geq\frac{4}{9}A_{0}^{2}$.
	\begin{thm}
	\label{CH-BDF2-energy stability}
With the same assumptions as in Theorem~\ref{CH-BDF2-positivity}, we have the following stability for the proposed numerical scheme~\eqref{BDF2-AG-1}: provided $A\geq\frac{4}{9}A_{0}^{2}$,
	\begin{equation}
\tilde{F}_{h} (\phi^{n+1},\phi^{n}) \le \tilde{F}_{h}(\phi^{n},\phi^{n-1}),
  	\label{AG-BDF2-stability-0}
	\end{equation}
where
	\begin{equation}
\tilde{F}_h (\phi^{n+1}, \phi^n) := F_h (\phi^{n+1}) + \frac{1}{4 \dt} \| \phi^{n+1} - \phi^n \|_{-1,h}^2 + \frac43 A_0 \| \phi^{n+1} - \phi^n \|_2^2 ,
	\label{mod energy-BDF2-1}
	\end{equation}
and
	\[
F_{h}(\phi) := \frac{1}{3} \langle \phi^{-8} , 1 \rangle_\Omega + \frac{\varepsilon^{2}}{2} \| \nabla_h \phi \|_2^2 - \frac{4}{3} \langle  \phi^{-2} , 1 \rangle_\Omega ,
	\]
for any $\dt$, $h>0$.
	\end{thm}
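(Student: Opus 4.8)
The plan is to prove the modified energy dissipation inequality~\eqref{AG-BDF2-stability-0} by taking the discrete inner product of the momentum equation~\eqref{scheme-mu-BDF2-1} with the appropriate increment and exploiting the convexity established in Lemma~\ref{lem:convexity-0}. First I would recall that, by~\eqref{BDF2-AG-1}, the time-difference term $\frac32 \phi^{n+1} - 2\phi^n + \frac12 \phi^{n-1}$ equals $\dt \Delta_h \mu^{n+1}$, so testing $\mu^{n+1}$ against $\phi^{n+1}-\phi^n$ converts the diffusion into a dissipation term $-\dt \| \nabla_h \mu^{n+1} \|_2^2 \le 0$ after summation by parts via Lemma~\ref{L1}. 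The natural test function is therefore $\phi^{n+1}-\phi^n \in \mathring{\mathcal{C}}_{\rm per}$; the strategy is to show that $\langle \mu^{n+1}, \phi^{n+1}-\phi^n \rangle_\Omega$ bounds below the decrease of the modified energy $\tilde{F}_h$.

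The key algebraic step is to handle each term in $\mu^{n+1}$. For the combined singular term $-\frac83\bigl((\phi^{n+1})^{-9} - (\phi^{n+1})^{-3}\bigr) + \frac83 A_0 \phi^{n+1}$, I would observe that this is exactly $\delta_\phi$ of the convex functional $F_c$ from Lemma~\ref{lem:convexity-0}, so convexity yields
	\begin{equation*}
\bigl\langle \delta_\phi F_c(\phi^{n+1}), \phi^{n+1}-\phi^n \bigr\rangle_\Omega \ge F_{h,c}(\phi^{n+1}) - F_{h,c}(\phi^n) ,
	\end{equation*}
where $F_{h,c}$ denotes the discrete analogue of $F_c$ including the $\frac{\varepsilon^2}{2}\|\nabla_h\cdot\|_2^2$ piece. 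The concave (quadratic) part $-\frac83 A_0 \check{\phi}^{n+1} = -\frac83 A_0 (2\phi^n - \phi^{n-1})$ is the explicit Adams--Bashforth extrapolation of $\frac83 A_0 \phi$; for this I would use the standard identity for quadratic forms under AB2 extrapolation, namely that $\langle 2\phi^n - \phi^{n-1} - \phi^{n+1}, \phi^{n+1}-\phi^n\rangle$ produces both a telescoping $\|\phi^{n+1}-\phi^n\|_2^2$ contribution and a remainder controlled by $\|\phi^n - \phi^{n-1}\|_2^2$.

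The two genuinely second-order features require the characteristic BDF2 and Douglas--Dupont manipulations. For the BDF2 temporal stencil, the plan is to invoke the well-known identity
	\begin{equation*}
\Bigl\langle \tfrac32\phi^{n+1} - 2\phi^n + \tfrac12\phi^{n-1}, (-\Delta_h)^{-1}(\phi^{n+1}-\phi^n) \Bigr\rangle_\Omega \ge \frac{1}{4}\Bigl( \|\phi^{n+1}-\phi^n\|_{-1,h}^2 - \|\phi^n-\phi^{n-1}\|_{-1,h}^2 \Bigr) ,
	\end{equation*}
which supplies the $\frac{1}{4\dt}\|\phi^{n+1}-\phi^n\|_{-1,h}^2$ term in the definition~\eqref{mod energy-BDF2-1} of $\tilde{F}_h$. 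The artificial regularization $-A\dt\Delta_h(\phi^{n+1}-\phi^n)$, tested against $\phi^{n+1}-\phi^n$, contributes $A\dt\|\nabla_h(\phi^{n+1}-\phi^n)\|_2^2 \ge 0$, a nonnegative quantity used to absorb the error remainders generated by the AB2 and BDF2 steps. I expect the main obstacle to be the bookkeeping in this absorption: the explicit treatment of the concave part and the BDF2 remainder each leave behind indefinite terms of order $\|\phi^n-\phi^{n-1}\|^2$, and one must verify that the stabilization constant $A \ge \frac49 A_0^2$ is precisely large enough that $A\dt\|\nabla_h(\phi^{n+1}-\phi^n)\|_2^2$ dominates them after a Cauchy--Schwarz/Young estimate, using $\|\phi^n-\phi^{n-1}\|_2^2 \le \|\nabla_h(\cdots)\|_2 \cdot \|(\cdots)\|_{-1,h}$-type interpolation. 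Collecting all terms, rearranging the telescoping contributions into $\tilde{F}_h(\phi^{n+1},\phi^n)$ and $\tilde{F}_h(\phi^n,\phi^{n-1})$, and discarding the manifestly nonnegative dissipation $\dt\|\nabla_h\mu^{n+1}\|_2^2$ then yields~\eqref{AG-BDF2-stability-0}.
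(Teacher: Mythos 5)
Your overall plan --- test the scheme with $(-\Delta_h)^{-1}(\phi^{n+1}-\phi^n)$, apply the convexity of Lemma~\ref{lem:convexity-0} to the implicit combination $-\frac83\bigl((\phi^{n+1})^{-9}-(\phi^{n+1})^{-3}\bigr)+\frac83 A_0\phi^{n+1}$, and telescope into the modified energy --- is the same as the paper's, but two of your concrete steps are wrong, and they are exactly the steps on which the theorem turns. First, a small but telling error: with this test function no dissipation term $\dt\|\nabla_h\mu^{n+1}\|_2^2$ ever appears; that term would arise only from testing $\mu^{n+1}$ against the full BDF2 increment $\frac32\phi^{n+1}-2\phi^n+\frac12\phi^{n-1}=\dt\Delta_h\mu^{n+1}$, which is incompatible with the convexity step, and consistently the conclusion~\eqref{AG-BDF2-stability-0} contains no dissipation term. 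Second, and fatally, your BDF2 inequality is too weak. The correct bound is
\begin{equation*}
\Bigl\langle \tfrac32\phi^{n+1}-2\phi^n+\tfrac12\phi^{n-1},\,\phi^{n+1}-\phi^n\Bigr\rangle_{-1,h}\ \ge\ \tfrac54\|\phi^{n+1}-\phi^n\|_{-1,h}^2-\tfrac14\|\phi^n-\phi^{n-1}\|_{-1,h}^2 ,
\end{equation*}
and the surplus $\frac{1}{\dt}\|\phi^{n+1}-\phi^n\|_{-1,h}^2$ beyond the $\frac{1}{4\dt}$-telescoping part is indispensable: it is paired with the Douglas--Dupont term through $\frac{1}{\dt}\|\psi\|_{-1,h}^2+A\dt\|\nabla_h\psi\|_2^2\ge 2A^{1/2}\|\psi\|_2^2$ (the $\dt$ factors cancel, using $\|\psi\|_2^2\le\|\psi\|_{-1,h}\,\|\nabla_h\psi\|_2$), and this is what manufactures the term $\frac43 A_0\|\phi^{n+1}-\phi^n\|_2^2$ that must appear in $\tilde F_h(\phi^{n+1},\phi^n)$; it is precisely here that $A\ge\frac49 A_0^2$, i.e.\ $2A^{1/2}\ge\frac43 A_0$, is used. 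Your version retains only the $\frac14$-coefficients and throws the surplus away; the stabilization term, carrying a factor $\dt$, cannot by itself produce a $\dt$-independent multiple of $\|\phi^{n+1}-\phi^n\|_2^2$, so the theorem's claim ``for any $\dt>0$'' becomes unreachable.

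Your treatment of the explicit concave term compounds the problem. The quantity $\frac83 A_0\phi^{n+1}$ in~\eqref{scheme-mu-BDF2-1} is already committed to the convexity estimate (Lemma~\ref{lem:convexity-0} is false without the $\frac43 A_0\phi^2$ augmentation), so the leftover explicit term is $-\frac83 A_0\langle 2\phi^n-\phi^{n-1},\phi^{n+1}-\phi^n\rangle_\Omega$; writing it as $\frac83 A_0\langle \phi^{n+1}-2\phi^n+\phi^{n-1},\phi^{n+1}-\phi^n\rangle_\Omega$, as you propose, double-counts $\phi^{n+1}$. Estimated honestly (Cauchy--Schwarz, with the resulting $\pm\frac43 A_0(\|\phi^{n+1}\|_2^2-\|\phi^n\|_2^2)$ cancelling against the convex part), it yields no positive $\|\phi^{n+1}-\phi^n\|_2^2$ contribution at all, only the remainder $-\frac43 A_0\|\phi^n-\phi^{n-1}\|_2^2$. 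Moreover, that remainder is not, and cannot be, absorbed by $A\dt\|\nabla_h(\phi^{n+1}-\phi^n)\|_2^2$ as you suggest --- it involves the \emph{previous} increment, which the current stabilization term does not control; it is handled because $\tilde F_h(\phi^n,\phi^{n-1})$ in~\eqref{mod energy-BDF2-1} already contains $+\frac43 A_0\|\phi^n-\phi^{n-1}\|_2^2$, so it simply telescopes. In short: the old-increment remainder is paid for by the old modified energy, and the new-increment term is generated by the BDF2 surplus combined with the stabilization; your proposal routes both through the stabilization term, which works for neither.
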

	\begin{proof}
By taking the inner products of the terms of~\eqref{BDF2-AG-1} with $(-\Delta_h)^{-1}(\phi^{n+1}-\phi^{n})$, we can derive the following inequalities: for the time difference,
	\begin{equation}
	\label{AG-BDF2-stability-1}
	\begin{split}
& \frac{1}{\dt} \left\langle \frac{3}{2}\phi^{n+1 }- 2\phi^{n} + \frac{1}{2} \phi^{n-1} , (-\Delta_h)^{-1} \left(\phi^{n+1}-\phi^{n}\right) \right\rangle_{\Omega}
 	\\
& \quad =  \frac{3}{2 \dt} \|\phi^{n+1}-\phi^{n}\|_{-1,h}^{2}
- \frac{1}{2 \dt} \langle \phi^{n+1} - \phi^{n},\phi^{n} - \phi^{n-1} \rangle_{-1,h}
	\\
& \quad \ge  \frac{1}{\dt} \left( \frac{5}{4} \|\phi^{n+1}-\phi^{n}\|_{-1,h}^{2}
 - \frac{1}{4}\|\phi^{n}-\phi^{n-1}\|_{-1,h}^{2} \right);
	\end{split}
	\end{equation}
for the singular terms,
	\begin{equation}
	\label{AG-BDF2-stability-2-1}
	\begin{split}
& \left\langle - \Delta_h \left(-\frac{8}{3} ( (\phi^{n+1})^{-9} - (\phi^{n+1})^{-3})
+ \frac{8}{3}A_{0} \phi^{n+1} \right), (-\Delta_h)^{-1} \left(\phi^{n+1}-\phi^{n}\right) \right\rangle_{\Omega}
	\\
& \quad =  \left\langle-\frac{8}{3}\left((\phi^{n+1})^{-9} + (\phi^{n+1})^{-3}\right)
+ \frac{8}{3} A_{0} \phi^{n+1}, \phi^{n+1}-\phi^{n} \right\rangle_{\Omega}
	\\
& \quad \ge  \frac{1}{3} \left( \left\langle(\phi^{n+1})^{-8}-4(\phi^{n+1})^{-2}+4A_{0}(\phi^{n+1})^{2},1\rangle_{\Omega} - \langle(\phi^{n})^{-8}-4(\phi^{n})^{-2}+4A_{0}(\phi^{n})^{2},1\right\rangle_{\Omega} \right) ;
	\end{split}
	\end{equation}
for the highest-order diffusion term,
	\begin{equation}
	\label{AG-BDF2-stability-3}
	\begin{split}
 \left\langle \Delta_h^{2}\phi^{n+1}, (-\Delta_h)^{-1} (\phi^{n+1}-\phi^{n}) \right\rangle_{\Omega} &   = \left\langle \nabla_h \phi^{n+1}, \nabla_h (\phi^{n+1}-\phi^{n}) \right\rangle_{\Omega}
	\\
&  =\frac{1}{2} \left( \|\nabla_h \phi^{n+1}\|_2^2 - \| \nabla_h \phi^n \|_2^2 + \| \nabla_h (\phi^{n+1}-\phi^{n}) \|_2^2 \right);
	\end{split}
	\end{equation}
for stabilization terms,
	\begin{equation}
A \dt \left\langle \Delta_h^{2} \left(\phi^{n+1}-\phi^{n}\right),( -\Delta_h)^{-1} (\phi^{n+1}-\phi^{n}) \right\rangle_{\Omega} =A \dt \| \nabla_h (\phi^{n+1}-\phi^{n}) \|_2^2 ;
	\label{AG-BDF2-stability-4}
	\end{equation}
and for the extrapolation terms,
	\begin{equation}
	\label{AG-BDF2-stability-5}
	\begin{split}
& \left\langle \Delta_h (2\phi^{n}-\phi^{n-1}), (-\Delta_h)^{-1} (\phi^{n+1}-\phi^{n}) \right\rangle_{\Omega}
	\\
& \quad = - \left\langle 2\phi^{n} - \phi^{n-1}, \phi^{n+1} - \phi^{n} \right\rangle_{\Omega} \ge -\frac{1}{2} (\|\phi^{n+1}\|_{2}^{2} - \|\phi^{n}\|_{2}^{2}) - \frac{1}{2} \|\phi^{n}-\phi^{n-1}\|_2^{2}.
	\end{split}
	\end{equation}
Estimate~\eqref{AG-BDF2-stability-2-1} is based on the convexity of
	\[
\frac{1}{3}\phi^{-8}-\frac{4}{3}\phi^{-2}+\frac{4}{3}A_{0}\phi^{2},
	\]
as guaranteed by Lemma~\ref{lem:convexity-0}. Meanwhile, an application of Cauchy's inequality yields
	\begin{equation}
\frac{1}{\dt} \| \phi^{n+1} - \phi^n \|_{-1,h}^2 + A \dt \| \nabla_h ( \phi^{n+1} - \phi^n ) \|_2^2 \ge 2 A^{\frac12}  \| \phi^{n+1} - \phi^n \|_2^2 .
	\label{CH-BDF2-stability-6}
	\end{equation}
A combination of \eqref{AG-BDF2-stability-1}-\eqref{CH-BDF2-stability-6} yields
	\begin{equation}
	\begin{split}
 & F_h (\phi^{n+1})  - F_h (\phi^n) + \frac{1}{4 \dt}  \left(  \| \phi^{n+1} - \phi^n \|_{-1, h}^2 - \| \phi^n - \phi^{n-1} \|_{-1, h}^2 \right)
	\\
&  + \frac{4}{3}A_{0} \left(\|\phi^{n+1}-\phi^{n}\|_{2}^{2} - \|\phi^{n}-\phi^{n-1}\|_{2}^{2} \right)
	\\
& \hspace{1in}  \leq \left(-2A^{\frac12}+\frac{4}{3}A_{0}\right) \|\phi^{n+1}-\phi^{n} \|_{2}^{2}
	\\
& \hspace{1in}  \le 0,
	\end{split}
	\end{equation}
provided that $A\geq\frac{4}{9}A_{0}^{2}$. Therefore,  we get the energy estimate~\eqref{AG-BDF2-stability-0}. This completes the proof of Theorem~\ref{CH-BDF2-energy stability}.
	\end{proof}

	\begin{rem}
There have been a few recent works describing BDF-type schemes for certain gradient flow models -- such as the Cahn-Hilliard equation~\cite{cheng2019a, yan18}, the epitaxial thin film equation~\cite{fengW18b, Hao2020, LiW18, Meng2020}, and the square phase field crystal model~\cite{cheng2019d} -- in which an energy stability was theoretically established. In the present case, a Douglas-Dupont-type regularization has to be included in the BDF2 scheme, and a careful analysis reveals an energy stability with respect to a modified energy. Meanwhile, the present analysis for a gradient flow with a strongly singular energy potential is non-trivial; but see the related analyses in~\cite{chen19b, dong20a}.
	\end{rem}
	
\begin{rem}
In the proposed second order numerical scheme~\eqref{BDF2-AG-1}-\eqref{scheme-mu-BDF2-1}, an additional quadratic term $\frac43 A_0 \phi^2$ makes the nonlinear singular energy functional convex, so that the energy stability analysis could be derived. Meanwhile, there are some alternate ways to get a convex-concave decomposition of the physical energy, such as taking a second order approximation to the concave term $\phi^{-3}$ at $t^{n+1}$ using an explicit extrapolation formula: $2 ( \phi^n )^{-3} -  ( \phi^{n-1} )^{-3}$. In fact, such an approach has been widely applied to the standard Cahn-Hilliard equation, in which the concave term is linear; see the related works~\cite{cheng2019a, yan18}. However, for the droplet liquid film model~\eqref{equation-AG}, this approach will lead to a theoretical difficulty to justify the energy stability of the corresponding numerical scheme, due to the singular and nonlinear nature of the concave term $\phi^{-3}$. Instead, we combine the singular convex term and singular concave term, as well as an additional quadratic term, so that their combined energy becomes convex, and this approach leads to perfect theoretical properties. Also see a few related works~\cite{LiD2017, LiD2017b} for the analysis of the stabilization numerical methods applied to the standard Cahn-Hilliard equation.
\end{rem}

\begin{rem}
The proposed scheme~\eqref{BDF2-AG-1} is a two-step method. In addition to the initial data $\phi^0$, as given by~\eqref{initial data-1}, we need a ``ghost" point data, $\phi^{-1}$, a numerical approximation to the phase variable at $t^{-1} = - \dt$. Due to the long-stencil nature of the BDF2 temporal discretization, we take $\phi^{-1} = \phi^0 - \dt \Delta_h \mu_h^0$, in which $\mu_h$ stands for the finite difference approximation to the chemical potential; also see a similar treatment for the Cahn-Hilliard model in~\cite{guo16}. Such a ``ghost" point initialization gives a local consistency estimate $\phi^{-1} - \Phi^{-1} = O (\dt^2 + h^2)$, while its substitution into~\eqref{BDF2-AG-1} implies a first order truncation error, at $n=0$. On the other hand, because of the temporal derivative involved, such a fact does not spoil the truncation error for $n \ge 1$.

This approach does not affect the positivity-preserving property of the numerical solution for $\phi^{n+1}$, $n \ge 0$, since $\phi^0$ and $\phi^{-1}$ always stay bounded within one time step. Such an initialization method does not affect the modified energy stability estimate~\eqref{AG-BDF2-stability-0}, either. Meanwhile, to derive a uniform bound for the original energy functional of the numerical solution, we have to add two $O (\dt^2)$ correction terms:
	\begin{equation*}
\begin{aligned}
   F_h (\phi^{n+1}) \le & \tilde{F}_{h} (\phi^{n+1},\phi^{n}) \le \tilde{F}_{h}(\phi^{n},\phi^{n-1})
   ...
\\
   \le & \tilde{F}_{h}(\phi^{0},\phi^{-1})
   = F_h (\phi^0) + \frac{1}{4 \dt} \| \phi^0 - \phi^{-1} \|_{-1,h}^2
     + \frac43 A_0 \| \phi^0 - \phi^{-1} \|_2^2  .
\end{aligned}
	\end{equation*}
\end{rem}

The convergence result for the second order scheme is stated in the following theorem.


\begin{thm}  \label{thm:BDF2-convergence}
Given initial data $\Phi( \, \cdot \, ,t=0)\in C^{6}_{\rm per}(\Omega)$, suppose the exact solution for~\eqref{equation-AG} is of regularity class
	\[
\mathcal{R}_2:=H^{3}(0,T;C_{\rm per}(\Omega)) \cap H^2 (0,T;C^{2}_{\rm per}(\Omega)) \cap L^{\infty}(0,T;C^{6}_{\rm per}(\Omega)).
	\]
Then, provided $\dt$ and $h$ are sufficiently small, for all positive integers $n$, such that $t_{n}\le T,$ we have the following convergence estimate for the numerical solution~\eqref{BDF2-AG-1}
\begin{equation}
\| \tilde{\phi}^n \|_{-1,h} +  \Bigl( \varepsilon^2 \dt   \sum_{m=1}^{n} \| \nabla_h \tilde{\phi}^m \|_2^2 \Bigr)^\frac12  \le C ( \dt^2 + h^2 ),
	\label{AG-BDF2-convergence-0}
\end{equation}
where $C>0$ is independent of $n$,$\dt,$ and $h$.
\end{thm}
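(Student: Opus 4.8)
The plan is to mirror the structure of the first-order convergence proof (Theorem~\ref{thm:convergence-1st}), adapting it to the BDF2 stencil. First I would establish the consistency estimate: with the exact solution in the regularity class $\mathcal{R}_2$, a Taylor expansion of the BDF2 temporal stencil $\frac{3}{2}\Phi_N^{n+1} - 2 \Phi_N^n + \frac12 \Phi_N^{n-1}$ gives second-order temporal accuracy, and the spatial projection/difference operators give $O(h^2)$; the explicit extrapolation $\check{\phi}^{n+1} = 2\phi^n - \phi^{n-1}$ applied to the smooth solution is likewise second order. The ``ghost point'' initialization of $\phi^{-1}$ discussed in the preceding remark gives $\phi^{-1} - \Phi^{-1} = O(\dt^2 + h^2)$, so the error data is consistent at the start. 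This yields a truncation error $\tau^n$ with $\| \tau^n \|_{-1,h} \le C (\dt^2 + h^2)$ for $n \ge 1$.

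Next I would write the error equation by subtracting the scheme~\eqref{BDF2-AG-1}--\eqref{scheme-mu-BDF2-1} from the consistency relation, then take the discrete inner product with $(-\Delta_h)^{-1} \tilde{\phi}^{n+1}$ (up to a constant multiple chosen to match the BDF2 telescoping). The crucial algebraic identity here is the BDF2 inner-product formula, $\langle \frac32 \tilde{\phi}^{n+1} - 2\tilde{\phi}^n + \frac12 \tilde{\phi}^{n-1}, (-\Delta_h)^{-1}\tilde{\phi}^{n+1} \rangle_\Omega$, which telescopes into a difference of the quantities $\frac14 (\| \tilde{\phi}^{n+1}\|_{-1,h}^2 + \| 2\tilde{\phi}^{n+1} - \tilde{\phi}^n \|_{-1,h}^2)$ plus a nonnegative remainder $\frac14 \| \tilde{\phi}^{n+1} - 2\tilde{\phi}^n + \tilde{\phi}^{n-1} \|_{-1,h}^2$; this replaces identity~\eqref{AG-convergence-2}. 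The surface diffusion term again contributes $\varepsilon^2 \| \nabla_h \tilde{\phi}^{n+1} \|_2^2$ as in~\eqref{AG-convergence-3}, and the artificial Douglas--Dupont term $A \dt \langle \Delta_h^2 (\tilde{\phi}^{n+1} - \tilde{\phi}^n), (-\Delta_h)^{-1} \tilde{\phi}^{n+1} \rangle_\Omega$ must be handled; I expect it to produce a controllable $A\dt \langle \nabla_h(\tilde{\phi}^{n+1} - \tilde{\phi}^n), \nabla_h \tilde{\phi}^{n+1}\rangle_\Omega$ term that either telescopes or is absorbed using Young's inequality into the diffusion dissipation.

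For the nonlinear terms I would split the combined potential according to the convex decomposition of Lemma~\ref{lem:convexity-0}. The genuinely convex, implicitly treated part $-\frac83((\Phi_N^{n+1})^{-9} - (\phi^{n+1})^{-9})$ together with $+\frac83((\Phi_N^{n+1})^{-3} - (\phi^{n+1})^{-3}) + \frac83 A_0 \tilde{\phi}^{n+1}$ yields a nonnegative error inner product against $\tilde{\phi}^{n+1}$, by monotonicity of $f'$, and so drops out to the good side; this is the analogue of~\eqref{AG-convergence-4} and is what makes the whole scheme tractable. The explicitly extrapolated concave correction $\frac83 A_0(\check{\phi}^{n+1} - \check{\Phi}^{n+1}) = \frac83 A_0 (2\tilde{\phi}^n - \tilde{\phi}^{n-1})$ is linear, so it is bounded directly by $\| 2\tilde{\phi}^n - \tilde{\phi}^{n-1}\|_{-1,h} \| \tilde{\phi}^{n+1} \|_{-1,h}$ via Cauchy--Schwarz and Young, contributing only lower-order $\| \cdot \|_{-1,h}^2$ terms that Gr\"onwall can absorb. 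This is cleaner than the first-order case, since here there is \emph{no} singular concave term treated explicitly --- that is precisely the design motivation explained in the remark following Lemma~\ref{lem:convexity-0}. I do still need the uniform $\| (\phi^{n+1})^{-1} \|_8$-type bound coming from energy stability (Theorem~\ref{CH-BDF2-energy stability}) to control the constant $C^*$ appearing when estimating the nonlinear differences, exactly as in~\eqref{AG-convergence-5-2}.

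The main obstacle, as in the first-order analysis, is closing the estimate with a discrete Gr\"onwall argument. After substituting all bounds I would arrive at an inequality of the form $E^{n+1} - E^n + \varepsilon^2 \dt \| \nabla_h \tilde{\phi}^{n+1}\|_2^2 \le C \dt(\dt^2 + h^2)^2 + C \dt (E^{n+1} + E^n + E^{n-1})$, where $E^n$ is the BDF2-compatible energy-type quantity $\frac14(\|\tilde{\phi}^n\|_{-1,h}^2 + \|2\tilde{\phi}^n - \tilde{\phi}^{n-1}\|_{-1,h}^2)$. As flagged in Remark~\ref{rem: Gronwall}, the coefficient $8C^*\tilde{C}_2 \varepsilon^{-2}$ forces the ``sufficiently small $\dt,h$'' hypothesis: a constraint like $C C^* \tilde{C}_2 \varepsilon^{-2}\dt \le \frac{1}{32}$ is needed so the $E^{n+1}$ term on the right can be absorbed into the left, after which a three-level discrete Gr\"onwall inequality delivers~\eqref{AG-BDF2-convergence-0}. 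The delicate points will be (i) verifying that the BDF2 telescoping identity produces a genuinely nonnegative quadratic quantity $E^n$ whose equivalence to $\|\tilde{\phi}^n\|_{-1,h}^2$ is uniform in $n$, and (ii) tracking the extra stabilization and extrapolation contributions so they remain consistent with second-order accuracy; I would then note that the remaining computations parallel those of Theorem~\ref{thm:convergence-1st} and leave the routine details to the reader.
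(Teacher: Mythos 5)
Your strategy is the same as the paper's: identical consistency estimate, the test function $2(-\Delta_h)^{-1}\tilde{\phi}^{n+1}$, the BDF2 telescoping identity, the convexity of $\frac13\phi^{-8}-\frac43\phi^{-2}+\frac43 A_0\phi^{2}$ (Lemma~\ref{lem:convexity-0}) to discard the implicit nonlinear error inner product, the telescoping of the Douglas--Dupont term, and a Gronwall closure under a time-step smallness condition. However, one step fails as written: your treatment of the extrapolation term. After the $\Delta_h$ in the scheme cancels against the $(-\Delta_h)^{-1}$ in the test function, the quantity to be bounded is the plain $L^2$ pairing $\frac{16}{3}A_0\dt\,\langle 2\tilde{\phi}^n-\tilde{\phi}^{n-1},\,\tilde{\phi}^{n+1}\rangle_\Omega$, and this is \emph{not} bounded by $\|2\tilde{\phi}^n-\tilde{\phi}^{n-1}\|_{-1,h}\,\|\tilde{\phi}^{n+1}\|_{-1,h}$. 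Cauchy--Schwarz in the $\langle\cdot,\cdot\rangle_{-1,h}$ inner product controls $\langle f,g\rangle_{-1,h}$, not $\langle f,g\rangle_\Omega$; in Fourier terms, the weight $|k|^{-2}$ appears twice on your right-hand side and not at all on the left, so a high-frequency error mode violates the claimed inequality by an arbitrarily large factor. The correct duality bound is $\langle f,g\rangle_\Omega\le \|f\|_{-1,h}\,\|\nabla_h g\|_2$, which is exactly what the paper uses in~\eqref{AG-BDF2-convergence-6}: the extrapolation error is measured in $\|\cdot\|_{-1,h}$ but paired against $\|\nabla_h\tilde{\phi}^{n+1}\|_2$, and after Young's inequality the resulting $\varepsilon^{2}\dt\,\|\nabla_h\tilde{\phi}^{n+1}\|_2^{2}$ must be absorbed into the diffusion dissipation. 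This is why the factor $2$ in the test function matters --- it leaves $2\varepsilon^{2}\dt\,\|\nabla_h\tilde{\phi}^{n+1}\|_2^{2}$ on the left so that half can be spent here --- and it is also where the $A_0^{2}\varepsilon^{-2}$ coefficient (hence the time-step restriction) comes from. In short, the extrapolation term does not contribute ``only lower-order $\|\cdot\|_{-1,h}^{2}$ terms''; it also consumes half the dissipation, and your Gronwall bookkeeping must account for that to close.

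A secondary inaccuracy: you assert that you still need the $\|(\phi^n)^{-1}\|_8$-type bound and the constant $C^*$ of~\eqref{AG-convergence-5-2}. You do not. That machinery was needed in the first-order proof only because the singular concave term $\phi^{-3}$ was treated explicitly and had to be expanded by the mean value theorem. In the BDF2 scheme every singular term is implicit and is eliminated in one stroke by the convexity inequality~\eqref{AG-BDF2-convergence-4}, for which only the point-wise positivity of $\phi^{n+1}$ (Theorem~\ref{CH-BDF2-positivity}) and the smoothness of $\Phi_N$ are required; the explicitly treated part is linear, so no nonlinear-difference constant ever appears, and the final time-step constraint involves $A_0^{2}\varepsilon^{-2}$ rather than $C^*\tilde{C}_2\varepsilon^{-2}$.
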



	\begin{proof}
A careful consistency analysis indicates the following truncation error estimate:
\begin{equation}  \label{AG-BDF2-consistency-1}
\begin{split}
   \frac{\frac32 \Phi_N^{n+1} - 2 \Phi_N^n + \frac12 \Phi_N^{n-1}}{\dt}  =&
    \Delta_h
 \Bigl( - \frac83 ( ( \Phi_N^{n+1} )^{-9} - ( \Phi_N^{n+1} )^{-3} ) + \frac83 A_0
( \Phi_N^{n+1} - \check{\Phi}_N^{n+1} )
\\
  &
  - \varepsilon^2 \Delta_h \Phi_N^{n+1}
  - A \dt \Delta_h ( \Phi_N^{n+1} - \Phi_N^n ) \Bigr) + \tau^n ,
\end{split}
\end{equation}
with $\check{\Phi}_N^n = 2 \Phi_N^n - \Phi_N^{n-1}$, $\| \tau^n \|_{-1,h} \le C (\dt^2 + h^2)$.
In turn, subtracting the numerical scheme~\eqref{BDF2-AG-1} from~\eqref{AG-BDF2-consistency-1} gives
	\begin{equation}
	\label{AG-BDF2-consistency-2}
	\begin{split}
\frac{\frac32 \tilde{\phi}^{n+1} - 2 \tilde{\phi}^n + \frac12 \tilde{\phi}^{n-1}}{\dt} &  =
   \Delta_h
 \Bigl( - \frac83 ( ( \Phi_N^{n+1} )^{-9} - ( \Phi_N^{n+1} )^{-3} - A_0 \Phi_N^{n+1} )
	\\
& \quad + \frac83 ( ( \phi^{n+1} )^{-9} - ( \phi^{n+1} )^{-3} - A_0 \phi^{n+1} )
  - \frac83 A_0 \tilde{\check{\phi}}^{n+1}
	\\
& \quad  - \varepsilon^2 \Delta_h \tilde{\phi}^{n+1}
   - A \dt \Delta_h ( \tilde{\phi}^{n+1} - \tilde{\phi}^n ) \Bigr)  + \tau^n ,
\end{split}
\end{equation}
with $\tilde{\check{\phi}}^{n+1} = 2 \tilde{\phi}^n - \tilde{\phi}^{n-1}$.
Taking the discrete inner product of~\eqref{AG-BDF2-consistency-2} with $2 (-\Delta_h)^{-1} \tilde{\phi}^{n+1}$ yields
	\begin{align}
& \left\langle 3 \tilde{\phi}^{n+1} - 4 \tilde{\phi}^n + \tilde{\phi}^{n-1} ,
  \tilde{\phi}^{n+1}  \right\rangle_{-1, h}
  - 2 \varepsilon^2 \dt \ciptwo{ \tilde{\phi}^{n+1} }{ \Delta_h \tilde{\phi}^{n+1} } -  2 A \dt \langle \Delta_h ( \tilde{\phi}^{n+1} - \tilde{\phi}^n ) , \tilde{\phi}^{n+1} \rangle_\Omega
	\nonumber
	\\
& \quad +  \frac{16}{3} \dt \ciptwo{ - ( ( \Phi_N^{n+1} )^{-9} - ( \Phi_N^{n+1} )^{-3} - A_0 \Phi_N^{n+1} )  + ( ( \phi^{n+1} )^{-9} - ( \phi^{n+1} )^{-3} - A_0 \phi^{n+1} )  }{ \tilde{\phi}^{n+1} }
	\nonumber
	\\
& \qquad =  \frac{16}{3} A_0 \dt \ciptwo{ \tilde{\check{\phi}}^{n+1} }{ \tilde{\phi}^{n+1} }  + 2 \dt \langle \tau^n , \tilde{\phi}^{n+1} \rangle_{-1,h}.
	\label{AG-BDF2-convergence-1}
	\end{align}
For the temporal derivative stencil, the following identity is valid:
      \begin{equation}  \label{AG-BDF2-convergence-2}
 \begin{split}
      \left\langle 3 \tilde{\phi}^{n+1} - 4 \tilde{\phi}^n + \tilde{\phi}^{n-1} ,
  \tilde{\phi}^{n+1}  \right\rangle_{-1, h}
	=& \frac12 \Bigl( \| \tilde{\phi}^{n+1} \|_{-1, h}^2 - \| \tilde{\phi}^n \|_{-1, h}^2
\\
  &
	+ \| 2 \tilde{\phi}^{n+1} - \tilde{\phi}^n \|_{-1, h}^2
	- \| 2 \tilde{\phi}^n - \tilde{\phi}^{n-1} \|_{-1, h}^2
\\
  &
  + \| \tilde{\phi}^{n+1}  - 2 \tilde{\phi}^n + \tilde{\phi}^{n-1} \|_{-1, h}^2 \Bigr) .
\end{split}
	\end{equation}
The estimate for the surface diffusion term is straightforward:
\begin{eqnarray}
  - \langle \tilde{\phi}^{n+1} , \Delta_h \tilde{\phi}^{n+1} \rangle_\Omega  = \| \nabla_h \tilde{\phi}^{n+1} \|_2^2 .  \label{AG-BDF2-convergence-3}
\end{eqnarray}
For the nonlinear inner product, the convex nature of $\frac18 \phi^{-8} - \frac12 \phi^{-2} + \frac12 A_0 \phi^2$ (at a point-wise level) yields the following result:
	\begin{eqnarray}
	\ciptwo{ - ( ( \Phi_N^{n+1} )^{-9} - ( \Phi_N^{n+1} )^{-3} - A_0 \Phi_N^{n+1} )  + ( ( \phi^{n+1} )^{-9} - ( \phi^{n+1} )^{-3} - A_0 \phi^{n+1} )  }{ \tilde{\phi}^{n+1} }
	\ge 0 . \label{AG-BDF2-convergence-4}
	\end{eqnarray}
The term associated with the truncation error can be controlled by the standard Cauchy inequality:
	\begin{equation}
2 \langle \tau^n , \tilde{\phi}^{n+1} \rangle_{-1,h} \le  2 \| \tau^n \|_{-1,h} \cdot  \| \tilde{\phi}^{n+1} \|_{-1,h}  \le  \| \tau^n \|_{-1,h}^2 +  \| \tilde{\phi}^{n+1} \|_{-1,h}^2 .
	\label{AG-BDF2-convergence-5}
	\end{equation}
For the concave expansive error term, a similar inequality is available:
	\begin{eqnarray}
 \frac{16}{3} A_0 \ciptwo{ \tilde{\check{\phi}}^{n+1} }{ \tilde{\phi}^{n+1} }
 &\le& \frac{16}{3} A_0 \| \tilde{\check{\phi}}^{n+1} \|_{-1,h}  \| \nabla_h \tilde{\phi}^{n+1} \|_2
  \le \frac{64}{9} A_0^2 \varepsilon^{-2} \| \tilde{\check{\phi}}^{n+1} \|_{-1,h}^2  + \varepsilon^2 \| \nabla_h \tilde{\phi}^{n+1} \|_2
	\nonumber
	\\
& \le & \frac{64}{9} A_0^2 \varepsilon^{-2} ( 8 \| \tilde{\phi}^n \|_{-1,h}^2
 + 2  \| \tilde{\phi}^{n-1} \|_{-1,h}^2) + \varepsilon^2 \| \nabla_h \tilde{\phi}^{n+1} \|_2 .
	\label{AG-BDF2-convergence-6}
	\end{eqnarray}
In addition, the following identity could be derived for the artificial diffusion term:
\begin{equation}   \label{AG-BDF2-convergence-7}
\begin{split}
  - 2 \langle \Delta_h ( \tilde{\phi}^{n+1} - \tilde{\phi}^n ) , \tilde{\phi}^{n+1} \rangle_\Omega
  = & 2 \langle \nabla_h ( \tilde{\phi}^{n+1} - \tilde{\phi}^n ) , \nabla_h \tilde{\phi}^{n+1} \rangle_\Omega
\\
  =&
  \| \nabla_h \tilde{\phi}^{n+1} \|_2^2 - \| \nabla_h \tilde{\phi}^n \|_2^2
  +  \| \nabla_h ( \tilde{\phi}^{n+1} - \tilde{\phi}^n ) \|_2^2 .
\end{split}
	\end{equation}  	
Subsequently, a substitution of \eqref{AG-BDF2-convergence-2} -- \eqref{AG-BDF2-convergence-7} into~\eqref{AG-BDF2-convergence-1} yields
	\begin{equation}  \label{AG-BDF2-convergence-8}
\begin{split}
	&
\| \tilde{\phi}^{n+1} \|_{-1,h}^2 - \| \tilde{\phi}^n \|_{-1,h}^2
+ \| 2 \tilde{\phi}^{n+1} - \tilde{\phi}^n \|_{-1, h}^2
	- \| 2 \tilde{\phi}^n - \tilde{\phi}^{n-1} \|_{-1, h}^2
\\
&
+ A \dt^2 ( \| \nabla_h \tilde{\phi}^{n+1} \|_2^2 - \| \nabla_h \tilde{\phi}^n \|_2^2 )
+ \varepsilon^2 \dt \| \nabla_h \tilde{\phi}^{n+1} \|_2^2
	\\
 \le& \frac{128}{9} A_0^2 \varepsilon^{-2} \dt ( 4 \| \tilde{\phi}^n \|_{-1,h}^2
 +  \| \tilde{\phi}^{n-1} \|_{-1,h}^2) +  2 \dt \| \tau^n \|_{-1,h}^2  .
\end{split}
	\end{equation}
Similar to the arguments in Remark~\ref{rem: Gronwall}, under a constraint for the time step size, namely in the form of $\frac{128}{9} A_0^2 \varepsilon^{-2} \dt \le \frac{1}{32}$, we are able to apply the  discrete Gronwall inequality and obtain the desired convergence estimate:
	\begin{equation}
\| \tilde{\phi}^{n+1} \|_{-1,h} + \Bigl( \varepsilon^2 \dt \sum_{k=0}^{n+1} \| \nabla_h \tilde{\phi}^m \|_2^2 \Bigr)^{1/2}  \le C ( \dt^2 + h^2) ,
	\label{AG-BDF2-convergence-9}
	\end{equation}
where  $C>0$ is independent of $\dt$, $h$, and $n$. This completes the proof of  Theorem~\ref{thm:BDF2-convergence}.
	\end{proof}

	\section {The numerical solver and the numerical results}
	\label{sec:numerical results}
	
A preconditioned steepest descent (PSD) iteration algorithm is used to implement the proposed numerical schemes~\eqref{scheme-AG-1} and \eqref{BDF2-AG-1}, following the practical and  theoretical framework in~\cite{feng2017}. We give the details for the first order scheme~\eqref{scheme-AG-1}; the details for the second order one~\eqref{BDF2-AG-1} will be similar. It is clear that~\eqref{scheme-AG-1} can be recast as a minimization of the discrete convex energy functional~\eqref{AG-positive-1}, which becomes the discrete variation of \eqref{AG-positive-1} set equal to zero: $\mathcal{N}_h [\phi] = f^n$, where
	\[
 \mathcal{N}_h [\phi] := \frac{1}{\dt} (-\Delta_h)^{-1} ( \phi - \phi^n )   - \frac{8}{3} \phi^{-9}  - \varepsilon^2 \Delta_h \phi \quad \mbox{and} \quad f^n := - \frac{8}{3}(\phi^{n})^{-3}  .
	\]
The essential idea of the PSD solver is to use a linearized version of the nonlinear operator as a pre-conditioner. Specifically, the preconditioner, $\mathcal{L}_h: \mathring{\mathcal{C}}_{\rm per} \to \mathring{\mathcal{C}}_{\rm per}$,  is defined as 		\[
{\mathcal L}_h [\psi] :=   \frac{1}{\dt} (-\Delta_h)^{-1} \psi   + \psi - \varepsilon^2 \Delta_h \psi ,
	\]
and is a positive, symmetric operator. Specifically, this ``metric" is used to find an appropriate search direction for the steepest descent solver~\cite{feng2017}. Given the current iterate $\phi_n\in \mathcal{C}_{\rm per}$, we define the following \emph{search direction} problem: find $d_n \in \mathcal{C}_{\rm per}$ such that
\[
{\mathcal L}_h [d_n] = r_n-\overline{r_n} , \quad r_n := f^n -\mathcal{N}_h [\phi_n],
\]
where $r_n$ is the nonlinear residual of the $n^{\rm th}$ iterate $\phi_n$. Of course, this equation can be efficiently solved using the Fast Fourier Transform (FFT). Subsequently, the next iterate is obtained as
	\begin{equation}
\phi_{n+1} := \phi_n + \alpha_n d_n,
	\end{equation}
where $\alpha_n\in\mathbb{R}$ is the unique solution to the steepest descent line minimization problem
	\begin{equation}
\alpha_n := \operatorname*{argmin}_{\alpha\in\mathbb{R}} {\cal J} [\phi_n + \alpha d_n]= \operatorname*{argzero}_{\alpha\in\mathbb{R}}\ciptwo{\mathcal{N}_h[\phi_n+\alpha d_n]-f^n}{d_n} .
	\label{eqn-search}
	\end{equation}
Following similar techniques reported in~\cite{feng2017}, a theoretical analysis ensures a geometric convergence of the iteration sequence. Also see~\cite{chen20c, cheng2019d, fengW18a, fengW18b} for the applications of the PSD solver to various gradient flow models.

	\begin{rem}
We observe that the PSD method can be viewed as a quasi-Newton method, with an orthogonalization (line search) step. Indeed, $\mathcal{L}_h$ may be viewed as an approximation of the Jacobian. To fit more neatly into the framework of a traditional quasi-Newton method, one could just take step size equal to 1, so that the correction is just $\phi_{n+1} := \phi_n +  d_n$. Alternatively, one can just use quadratic line search methods to obtain an approximation of $\alpha_n$, call it $\alpha_n^{\rm q}$, to obtain a sufficiently good approximation that one can still prove a geometric convergence rate that is independent of $N$.
	\end{rem}

\subsection{Convergence test for the numerical schemes}

In this subsection we perform a numerical accuracy check for the proposed numerical schemes~\eqref{scheme-AG-1} and \eqref{BDF2-AG-1}. The computational domain is chosen as $\Omega = (0,1)^2$, and the exact profile for the phase variable is set to be
	\begin{equation}
\Phi (x,y,t) = 1 + \frac{1}{2 \pi} \sin( 2 \pi x) \cos(2 \pi y) \cos( t) .
	\label{AC-1}
	\end{equation}
In particular, we see that the exact profile is positive at a point-wise level, so that the computation will not cause any singularity issue. To make $\Phi$ satisfy the original PDE \eqref{equation-AG}, we have to add an artificial, time-dependent forcing term. Then the proposed scheme, either the first order one~\eqref{scheme-AG-1}, or the second order algorithm \eqref{BDF2-AG-1}, can be implemented to solve for (\ref{equation-AG}).

In the accuracy check for the first order scheme~\eqref{scheme-AG-1}, we fix the spatial resolution as $N=256$ (with $h=\frac{1}{256}$), so that the spatial numerical error is negligible. The final time is set as $T=1$, and the surface diffusion parameter is taken as $\varepsilon=0.5$.
Naturally, a sequence of time step sizes are taken as $\dt=\frac T{N_T}$, with $N_T = 100:100:1000$.  The expected temporal numerical accuracy assumption $e=C \dt$ indicates that $\ln |e|=\ln (CT) - \ln N_T$, so that we plot $\ln |e|$ vs. $\ln N_T$ to demonstrate the temporal convergence order. The fitted line displayed in Figure~\ref{fig1} shows an approximate slope of -0.9704, which in turn verifies a nice first order temporal convergence order, in both the discrete $\ell^2$ and $\ell^\infty$ norms. 


	\begin{figure}
	\begin{center}
\includegraphics[width=3.0in]{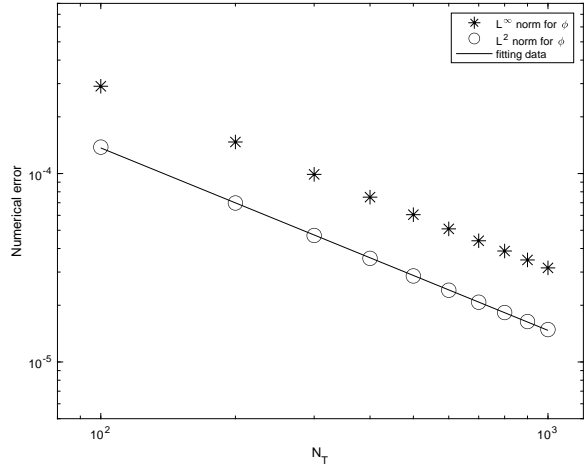}
	\end{center}
\caption{The discrete $\ell^2$ and $\ell^\infty$ numerical errors vs. temporal resolution  $N_T$ for $N_T = 100:100:1000$, with a spatial resolution $N=256$. The numerical results are obtained by the computation using the first order scheme~\eqref{scheme-AG-1}. The surface diffusion parameter is taken to be $\varepsilon=0.5$. The data lie roughly on curves $CN_T^{-1}$, for appropriate choices of $C$, confirming the full first-order accuracy of the scheme.}
	\label{fig1}
	\end{figure}
	
In the accuracy test for the second order scheme~\eqref{BDF2-AG-1}, we set the time size as $\dt = \frac12 h$, with $h = \frac{1}{N}$, so that the second order accuracy in both time and space could be confirmed. Again, the final time set as by $T=1$, and the surface diffusion parameter is given by $\varepsilon=0.5$. The artificial regularization parameters are taken as $A_{0} = A_0^\star = \frac{9}{5}(\frac{2}{15})^{\frac{2}{3}}$, $A = \frac49 A_0^2$, so that the convexity condition and stability requirement are satisfied. A sequence of spatial resolutions are taken as $N = 48:16:192$.  The expected temporal numerical accuracy assumption $e=C ( \dt^2 + h^2)$ indicates that $\ln |e|=\ln C - 2 \ln N$, so that we plot $\ln |e|$ vs. $\ln N$ to demonstrate the temporal convergence order. The fitted line displayed in Figure~\ref{fig2} shows an approximate slope of -1.9974, which in turn verifies a perfect second convergence order, in both time and space. 


	\begin{figure}
	\begin{center}
\includegraphics[width=3.0in]{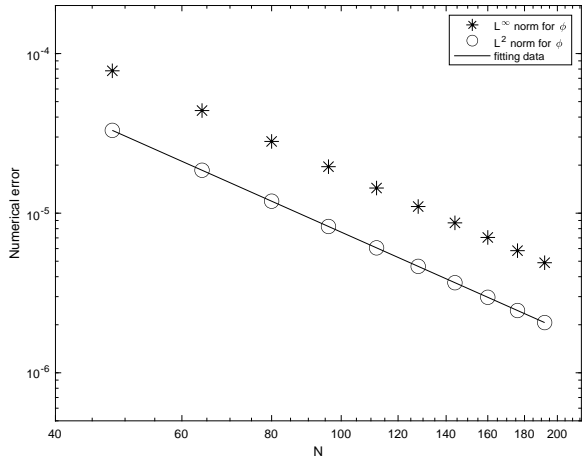}
	\end{center}
\caption{The discrete $\ell^2$ and $\ell^\infty$ numerical errors vs. spatial resolution  $N$ for $N = 48:16:192$, and the time step size is set as $\dt = \frac12 h$. The numerical results are obtained by the computation using the second order scheme~\eqref{BDF2-AG-1}. The surface diffusion parameter is taken to be $\varepsilon=0.5$. The data lie roughly on curves $CN^{-2}$, for appropriate choices of $C$, confirming the full second-order accuracy of the scheme, in both time and space.}
	\label{fig2}
	\end{figure}

\subsection{Numerical simulation of coarsening process} 

The nonlinear term in \eqref{equation-AG} associated with the  Lennard-Jones-type potential gives preference to rotationally invariant patterns. In this subsection, we perform a two-dimensional numerical simulation showing the coarsening process. The computational domain is set as $\Omega =(0,L)^2$, with $L=12.8$, and the interface width parameter is taken as $\varepsilon=0.02$. The initial data are given by
\begin{equation}
	\label{initial data-2}
 \phi^0_{i,j}= 2 + 0.1\cdot (2r_{i,j}-1), \quad
 \mbox{$r_{i,j}$ are uniformly distributed random numbers in $[0, 1]$} .
	\end{equation}
The second order numerical scheme~\eqref{BDF2-AG-1}	is implemented for this simulation. For the temporal step size $\dt$, we use increasing values of $\dt$ in the time evolution: $\dt = 0.001$ on the time interval $[0,100]$, $\dt = 0.004$ on the time interval $[100, 500]$, $\dt = 0.008$ on the time interval $[500, 2000]$ and $\dt = 0.02$ on the time interval $[2000, 6000]$. Whenever a new time step size is applied, we initiate the two-step numerical scheme by  taking $\phi^{-1} = \phi^0$, with the initial data $\phi^0$ given by the final time output of the last time period. The time snapshots of the evolution by using $\varepsilon=0.02$ are presented in Figure~\ref{fig3}, with significant coarsening observed in the system.  At early times many small hills (yellow) are present, with flat base (blue).  At the final time, $t= 6000$, a single hill structure emerges, and further coarsening is not possible. 

\begin{figure}[h]
	\begin{center}
		\begin{subfigure}{0.48\textwidth}
			\includegraphics[height=0.48\textwidth,width=0.48\textwidth]{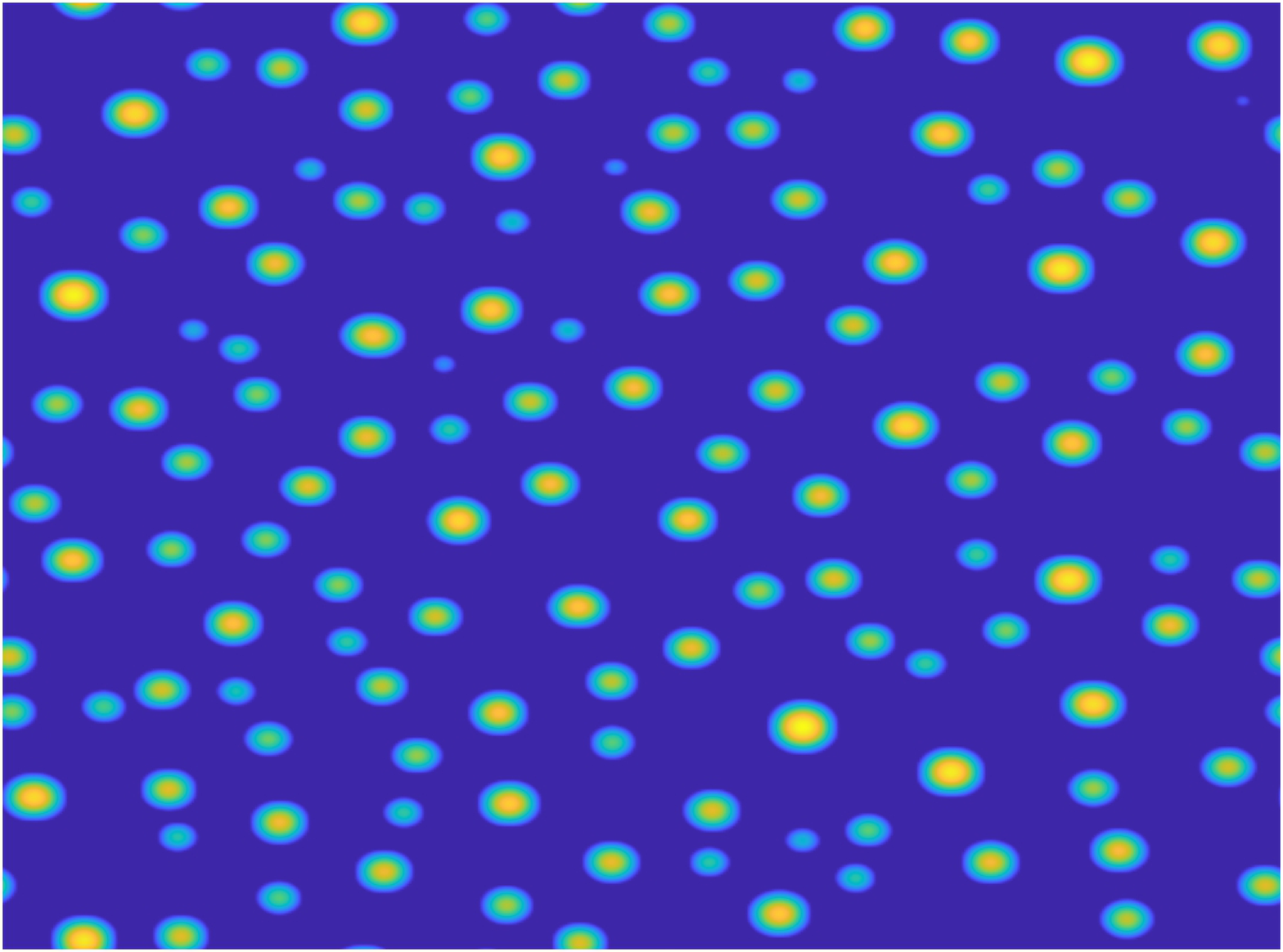}
			\includegraphics[height=0.48\textwidth,width=0.48\textwidth]{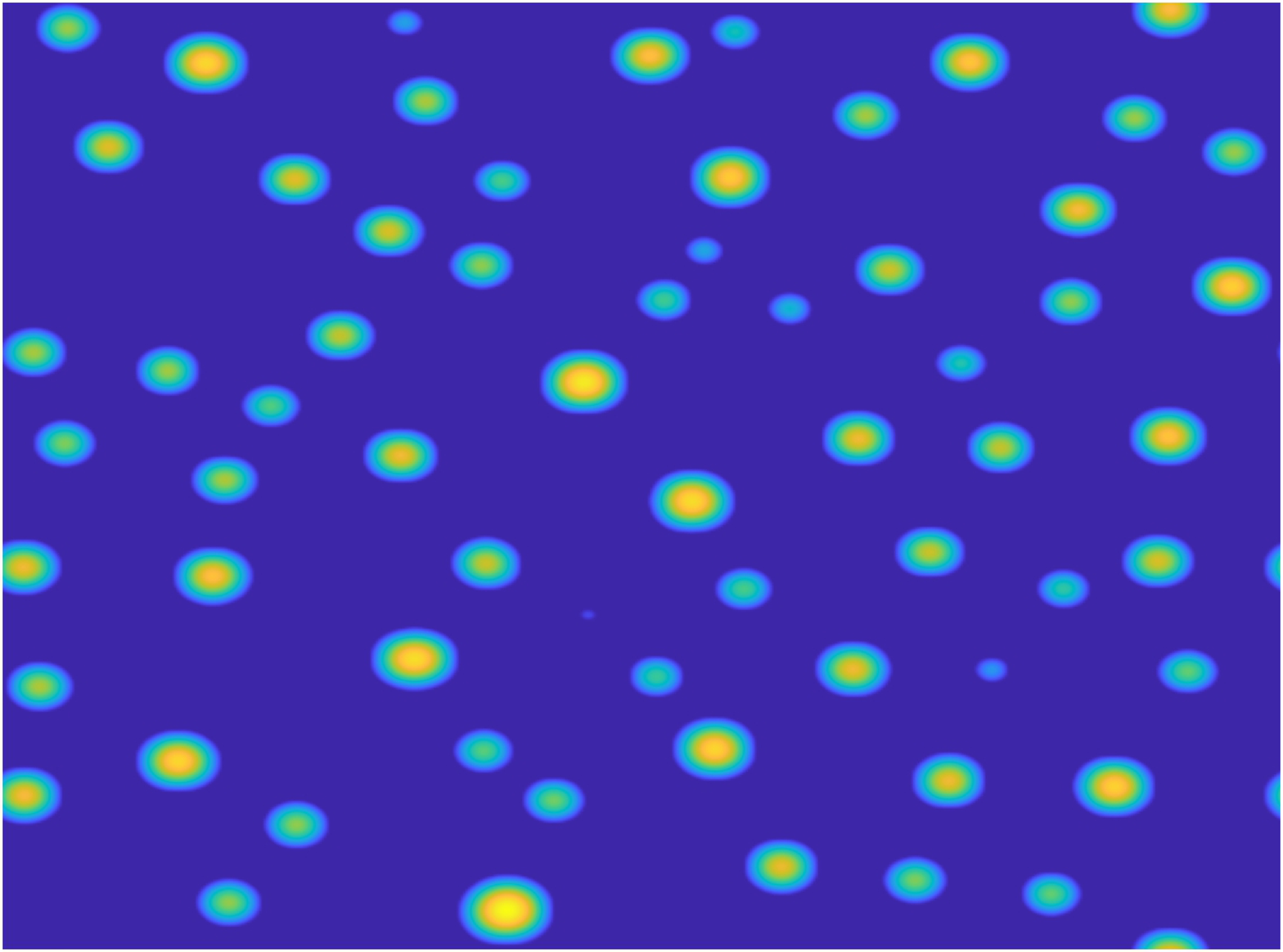}
			\caption*{$t=6, 20$}
		\end{subfigure}
		\begin{subfigure}{0.48\textwidth}
			\includegraphics[height=0.48\textwidth,width=0.48\textwidth]{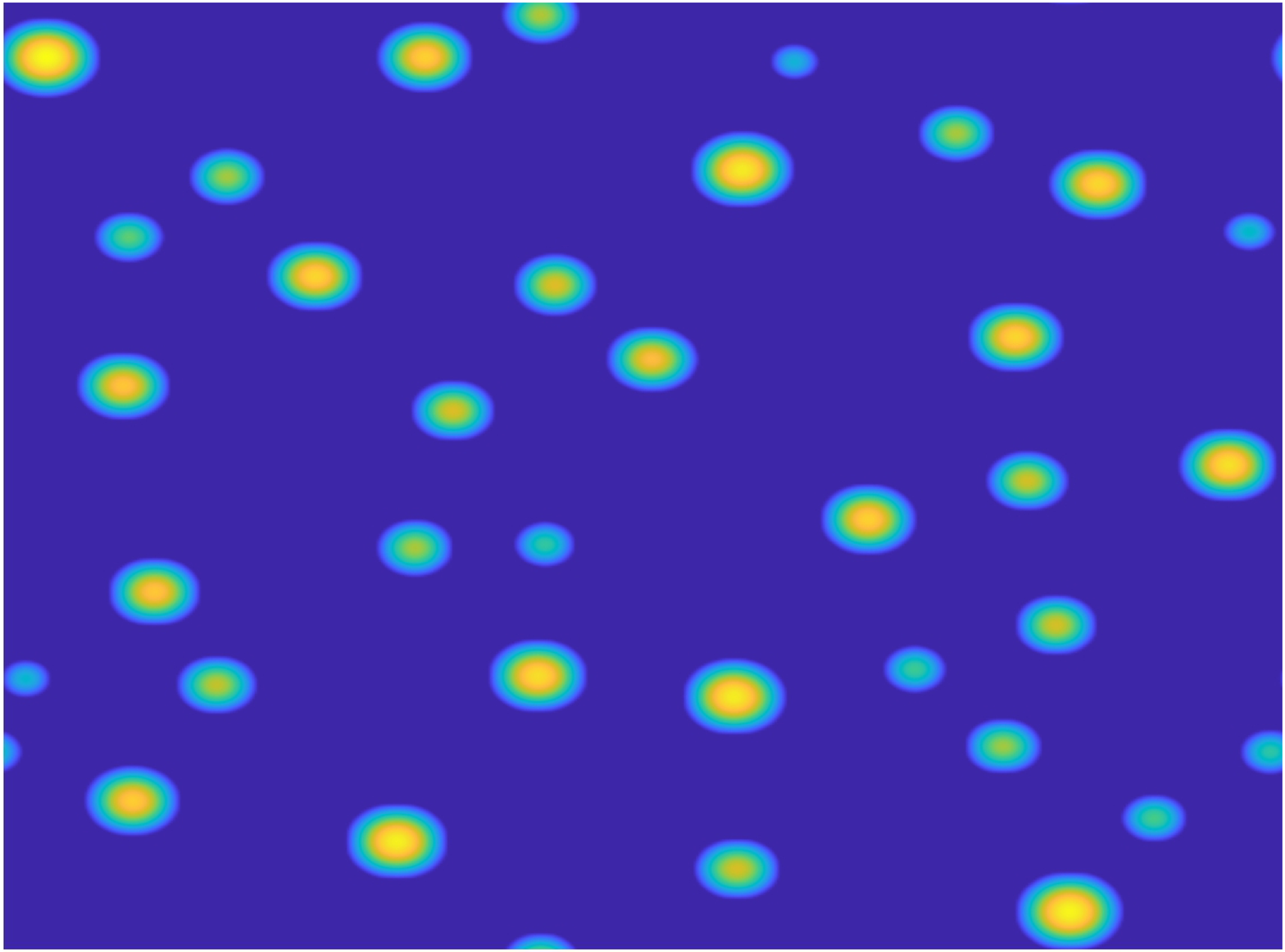}
			\includegraphics[height=0.48\textwidth,width=0.48\textwidth]{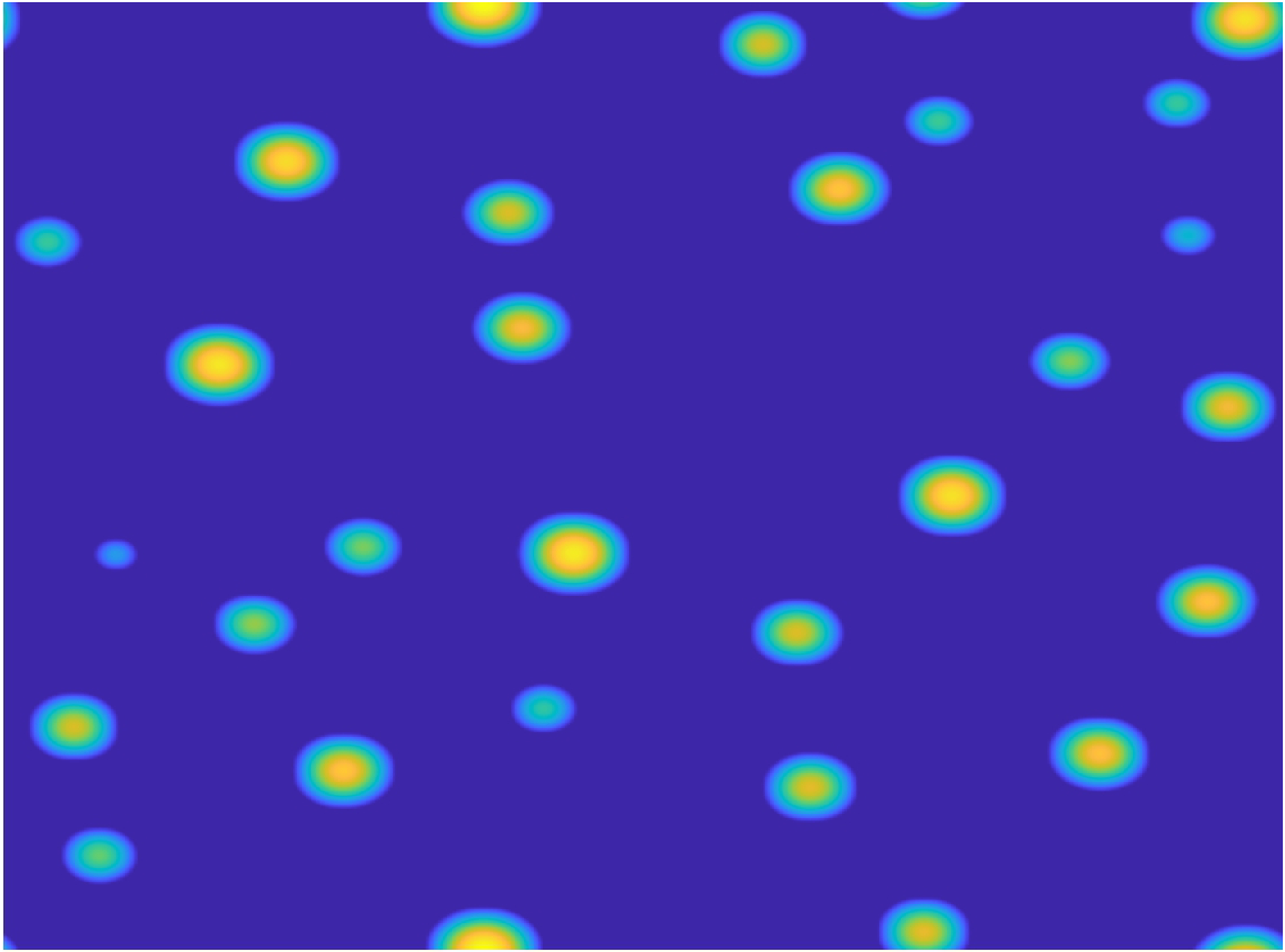}
			\caption*{$t=40, 60$}
		\end{subfigure}
		\begin{subfigure}{0.48\textwidth}
			\includegraphics[height=0.48\textwidth,width=0.48\textwidth]{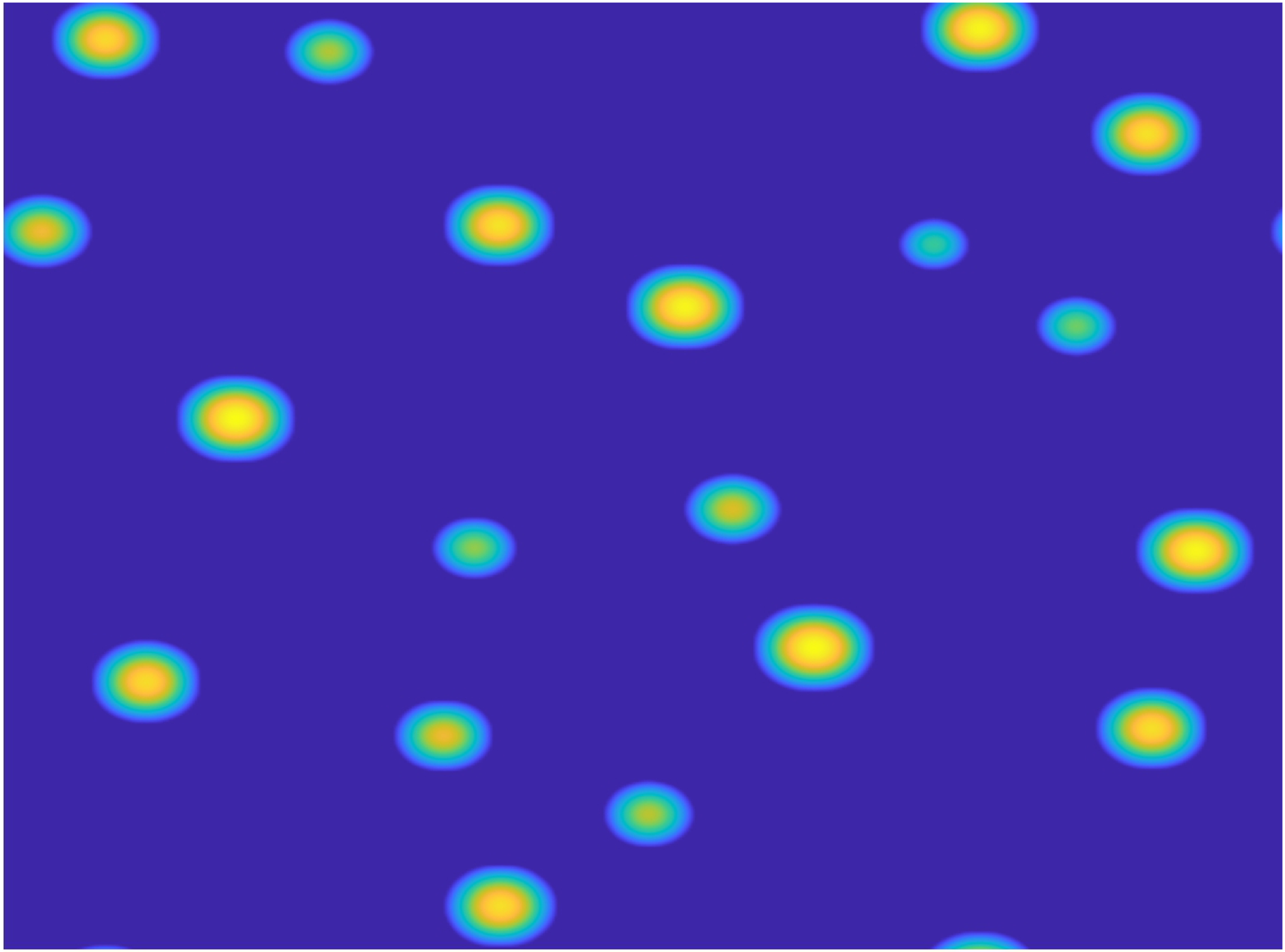}
			\includegraphics[height=0.48\textwidth,width=0.48\textwidth]{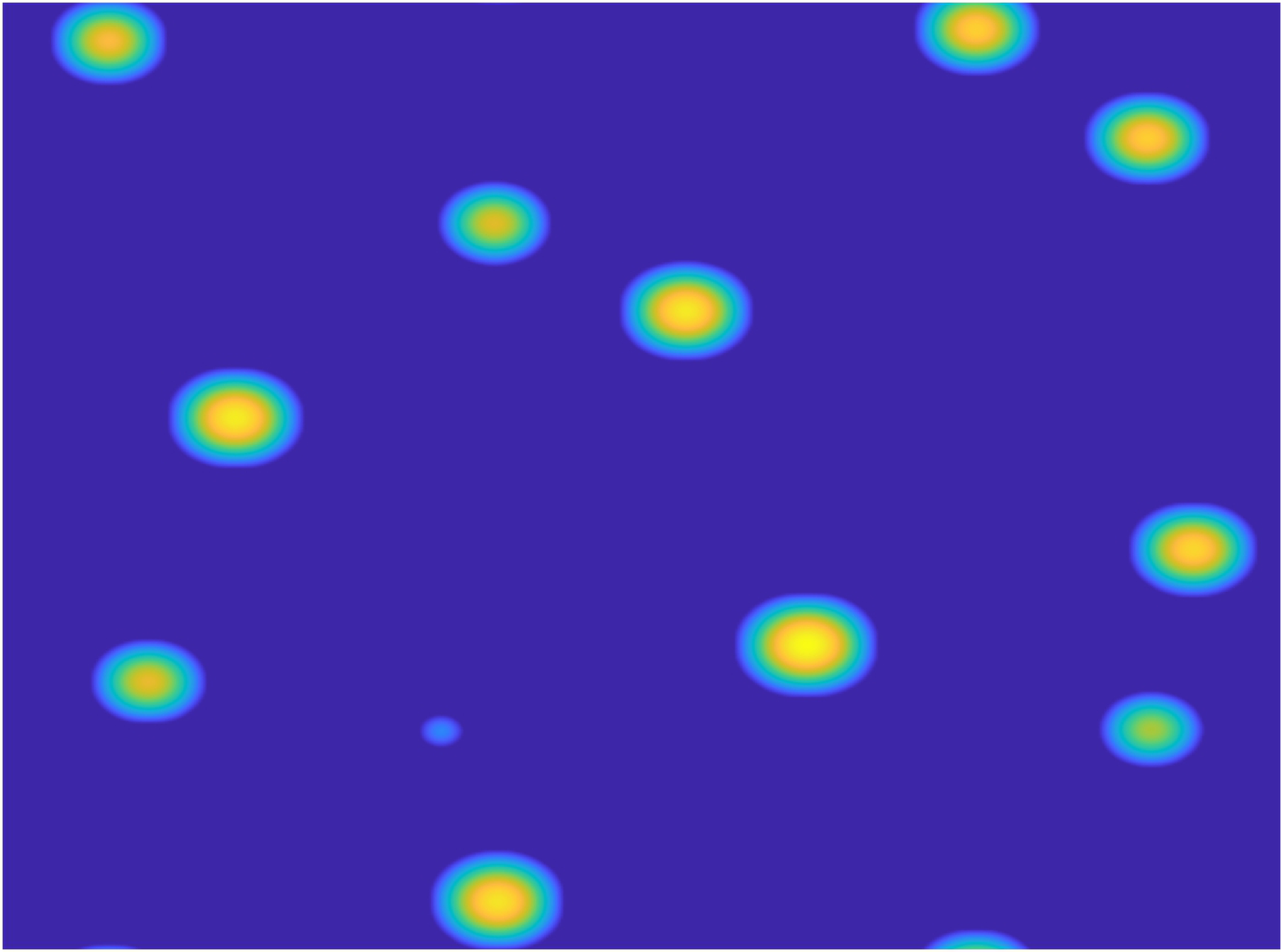}
			\caption*{$t=100, 200$}
		\end{subfigure}
		\begin{subfigure}{0.48\textwidth}
			\includegraphics[height=0.48\textwidth,width=0.48\textwidth]{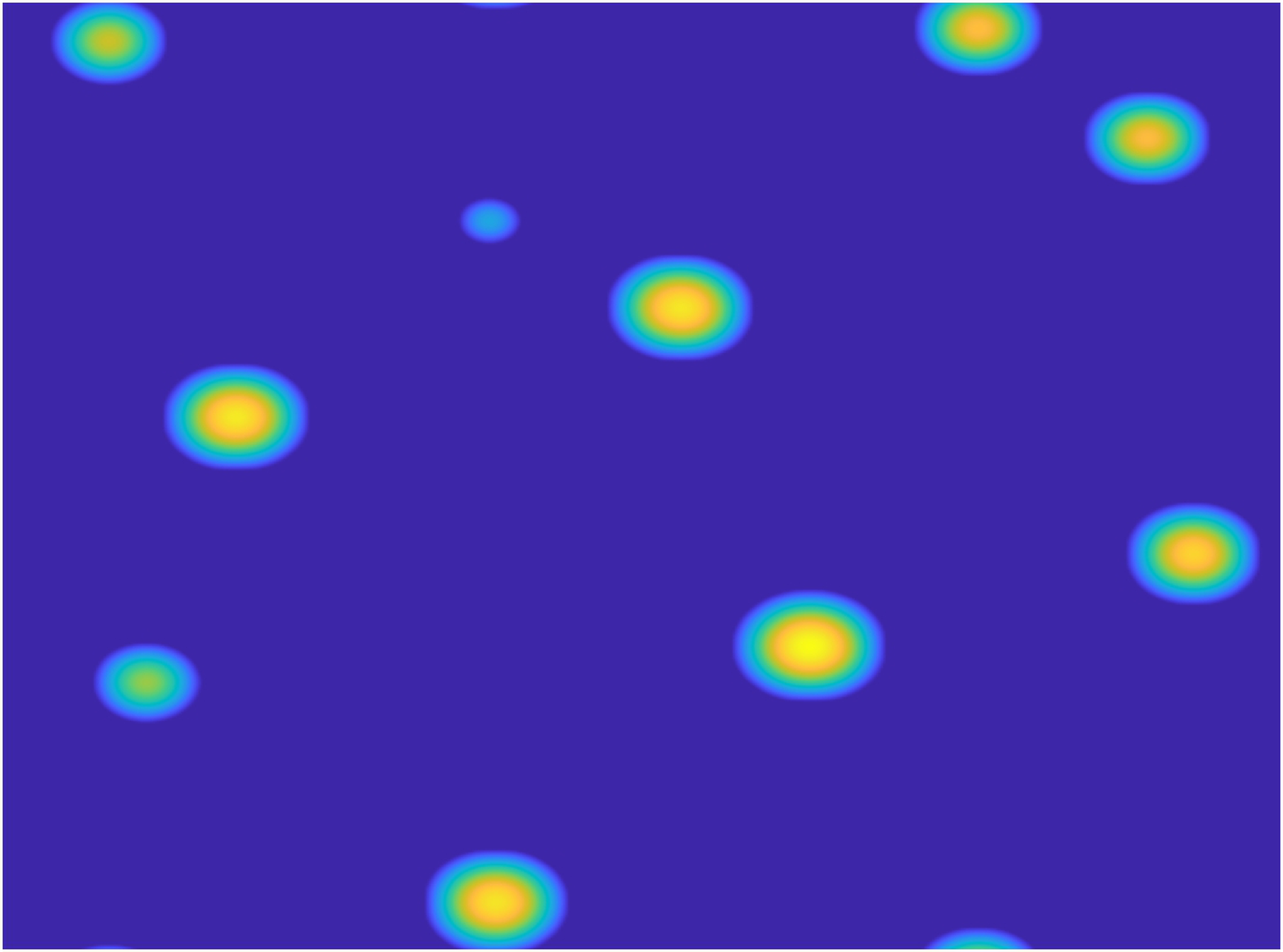}
			\includegraphics[height=0.48\textwidth,width=0.48\textwidth]{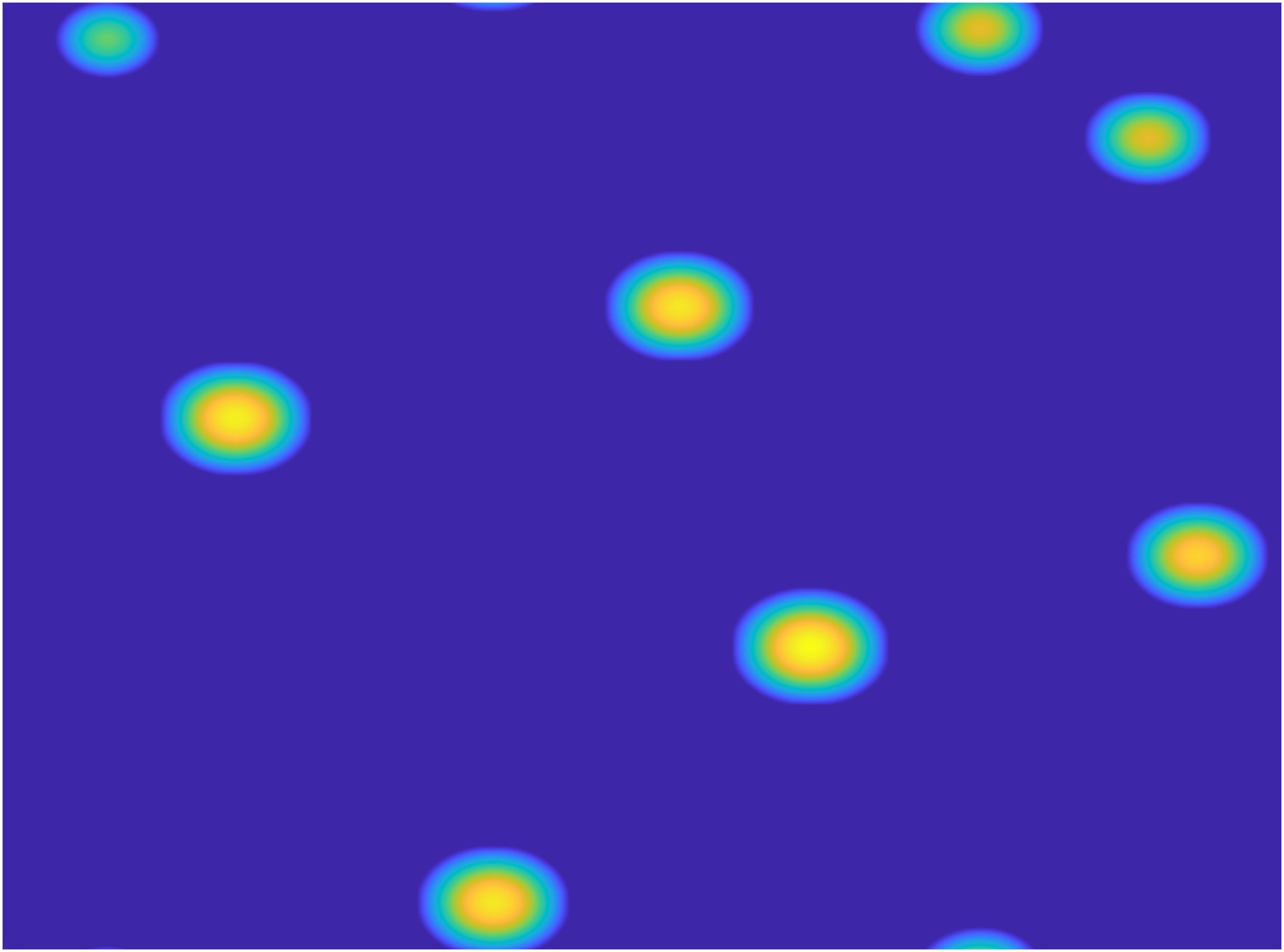}
			\caption*{$t=300, 400$}
		\end{subfigure}
		\begin{subfigure}{0.48\textwidth}
			\includegraphics[height=0.48\textwidth,width=0.48\textwidth]{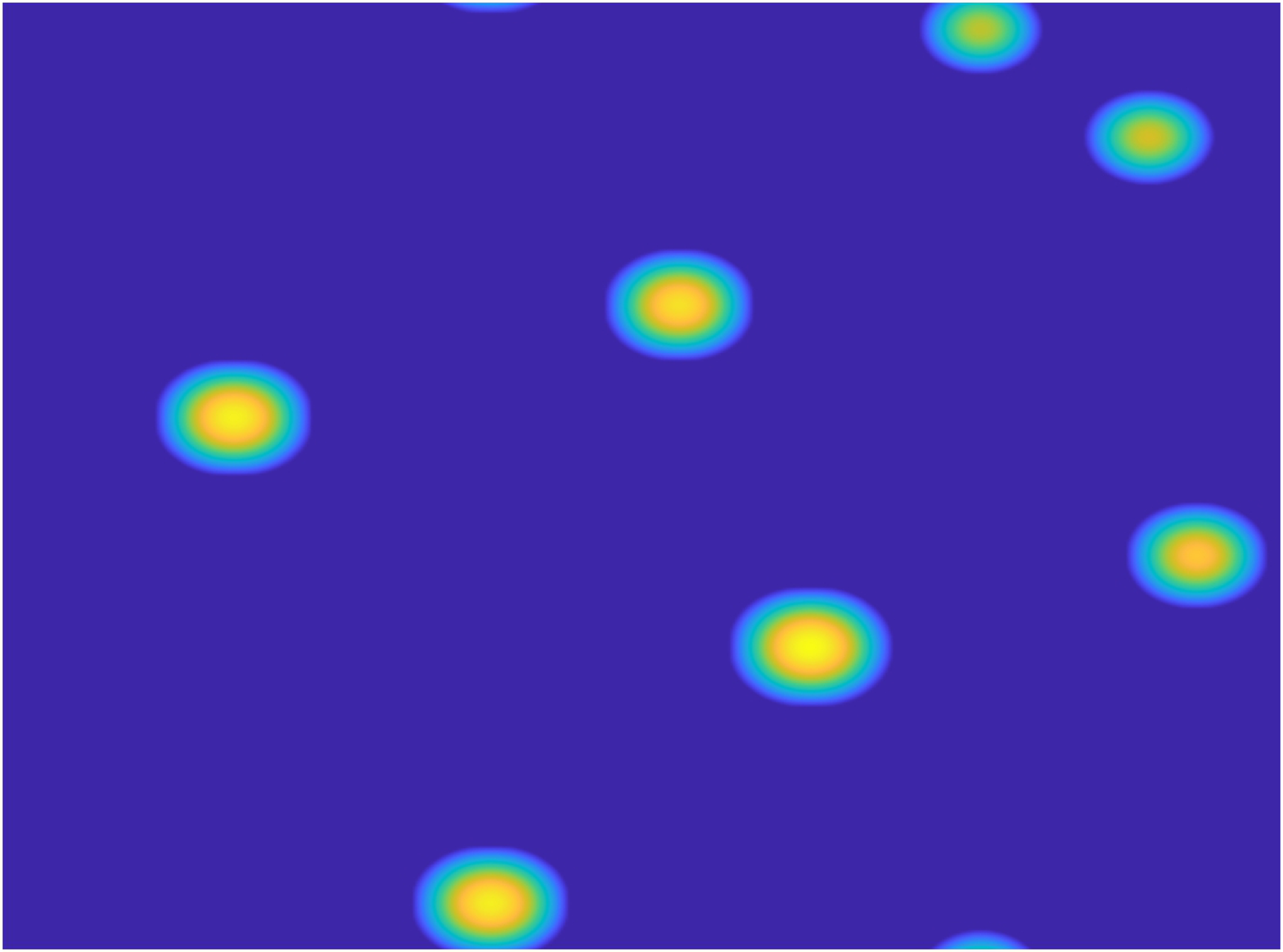}
			\includegraphics[height=0.48\textwidth,width=0.48\textwidth]{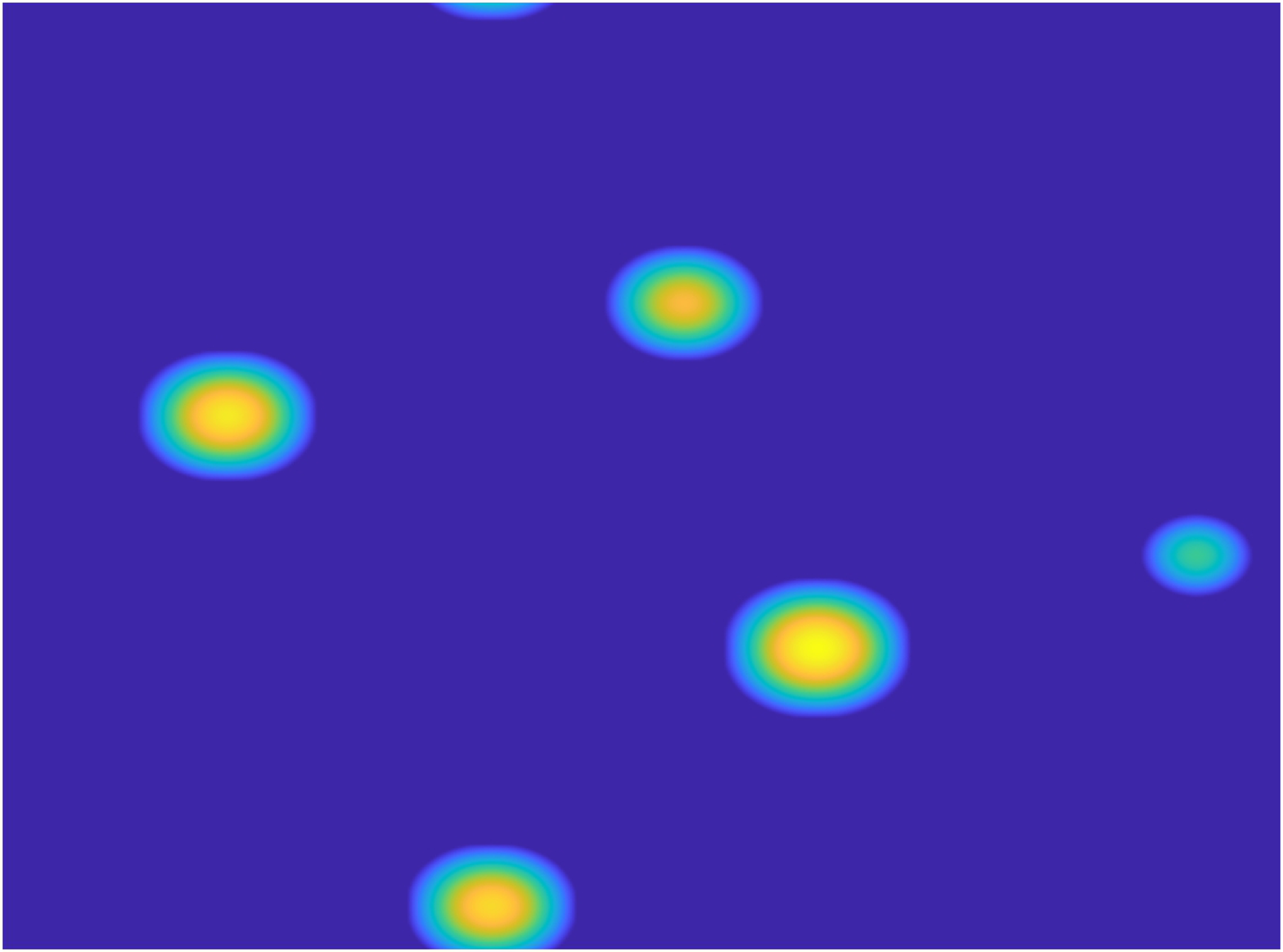}
			\caption*{$t=500, 900$}
		\end{subfigure}
		\begin{subfigure}{0.48\textwidth}
			\includegraphics[height=0.48\textwidth,width=0.48\textwidth]{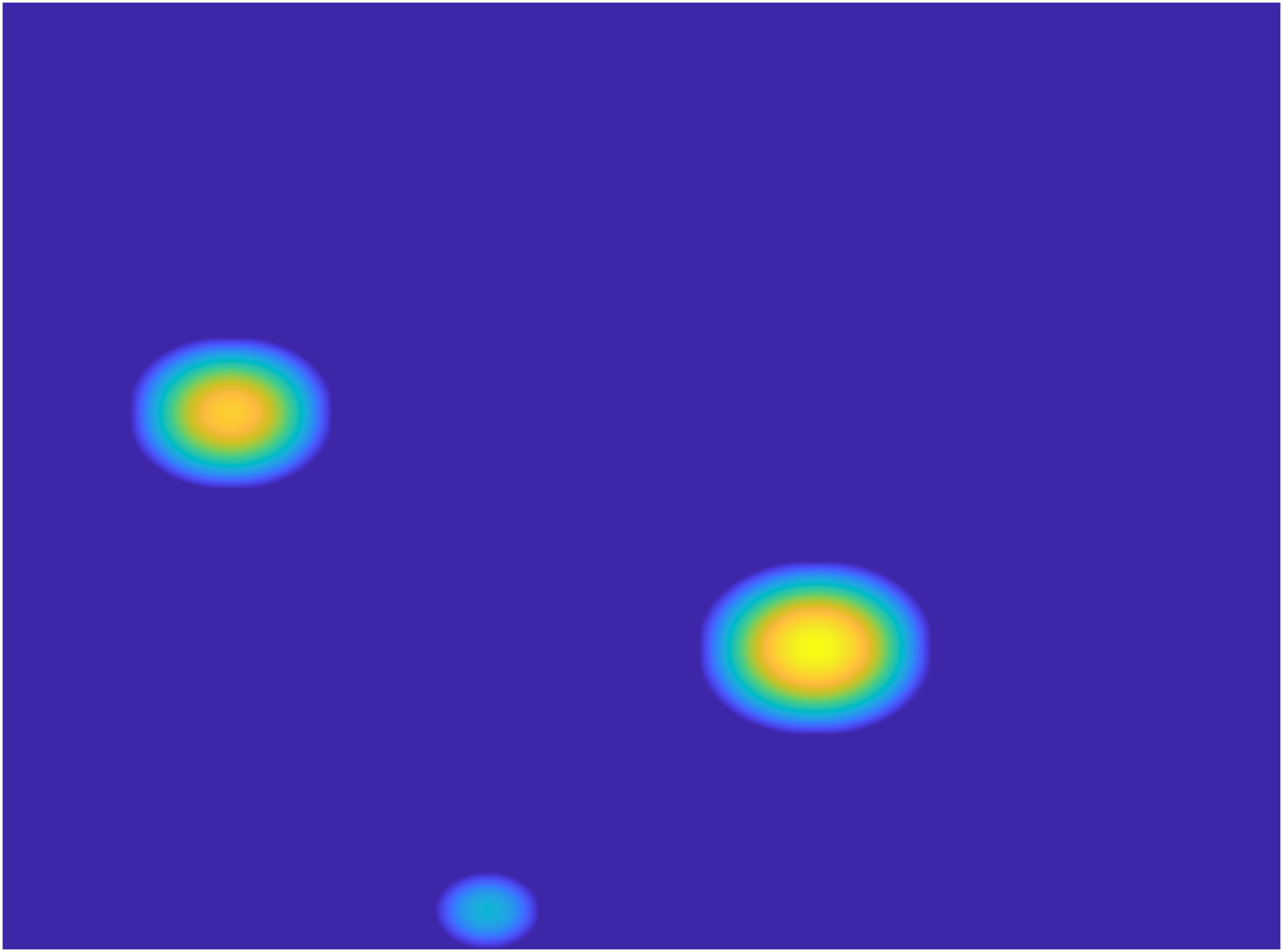}
			\includegraphics[height=0.48\textwidth,width=0.48\textwidth]{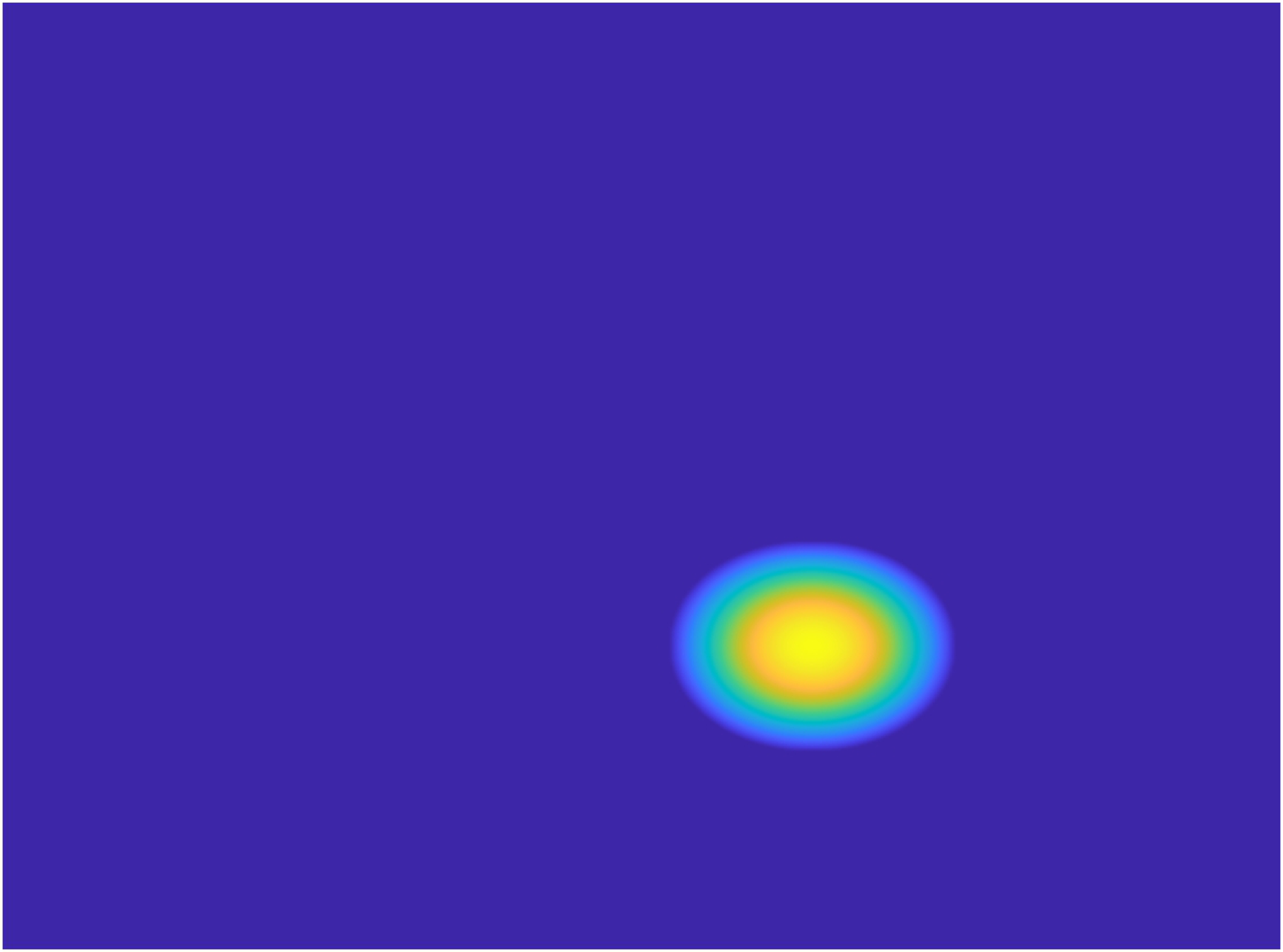}
			\caption*{$t=2000, 6000$}
		\end{subfigure}		
\caption{(Color online.) Snapshots of the computed height function $\phi$ at the indicated times for the parameters $L =12.8$, $\varepsilon = 0.02$.  Finally, there is a single droplet structure.}
		\label{fig3}
	\end{center}
\end{figure}

The long time characteristics of the solution, especially the energy decay rate, are of interest to material scientists. 
Figure~\ref{fig:energy evolution} presents the log-log plot for the energy versus time, with the given physical parameters. Recall that, at the space-discrete level, the energy, $F_h$ is defined via~\eqref{def: discrete energy}. The detailed scaling ``exponent" is obtained using least squares fits of the computed data up to time $t=100$.  A clear observation of the $a_e t^{b_e}$ scaling law can be made,  with $a_e = 59.8167$, $b_e=-0.1951$. For the Cahn-Hilliard flow with polynomial approximation of the double-well energy potential, various numerical experiments have indicated an approximately $t^{-1/3}$ energy dissipation law~\cite{cheng2019a, cheng16a}. For the droplet liquid film equation~\eqref{equation-AG}, with Lennard-Jones-type energy potential included in~\eqref{energy-AG-1}, this numerical evidence has implied a different energy dissipation scaling index, with $b_e = -0.1951$, in comparison with an approximate $t^{-1/3}$ scaling law for the standard Cahn-Hilliard model. 

	\begin{figure}
	\begin{center}
\includegraphics[width=3.0in]{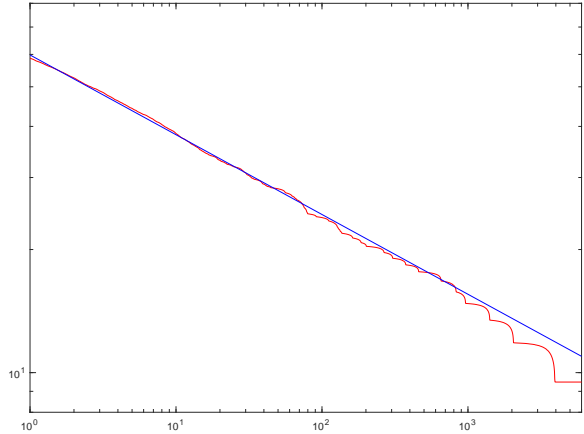}
	\end{center}
\caption{Log-log plot of the temporal evolution the energy $F_h$ for $\varepsilon=0.02$.  The energy decreases like $a_e t^{b_e}$ until saturation. The red lines represent the energy plot obtained by the simulations, while the straight lines are obtained by least squares approximations to the energy data.  The least squares fit is only taken for the linear part of the calculated data, only up to about time $t=100$. The fitted line has the form $a_e t^{b_e}$, with $a_e = 59.8167$, $b_e=-0.1951$.}
 	\label{fig:energy evolution}
 	\end{figure}
	
\begin{rem}
The coarsening dynamics problem usually is a long time process. To improve the computational efficiency, some adaptive time stepping strategies have been extensively applied in such a long-time simulation effort; our simulation has also used variable time step size method, as outlined above. There have been some theoretical works for the time-adaptive methods in the computation of coarsening process, such as the ones for the Cahn-Hilliard model~\cite{chen19c} and epitaxial thin film model~\cite{qiao2011}. The corresponding analysis for the time adaptive method applied to the droplet liquid film coarsening dynamics will be left to the future works.
\end{rem}

\begin{rem}
For the physical energy given by~\eqref{energy-AG-1}, the final steady state solution depends on $\varepsilon$ and the the mass average of $\phi$, which is a conserved quantity. It is well-known that, if the mass average is less than or equal to 1, the steady state solution turns out to be a trivial constant profile, due to its minimum energy constrained by the mass average. On the other hand, if the mass average is grater than 1, the time coarsening process and the final profile have interesting structures, as demonstrated in our reported numerical results. Under a circumstance that the initial data contain certain values near zero at certain region, the singular and convex nature of the Lennard-Jones-type energy potential ~\eqref{energy-AG-1} will push the solution away from the singular value of 0; in this case, if the mass average is greater than 1, one could still observe the interesting coarsening process, similar to our numerical simulation result.
\end{rem}



\section{Concluding remarks}  \label{sec:conclusion}

Two finite difference numerical schemes, including both the first and second order accurate algorithms, are proposed and analyzed for the droplet liquid film model, in which a singular Leonard-Jones energy potential is involved. In the first order scheme, the convex potential and the surface diffusion terms are implicitly, while the concave potential term is updated explicitly. A theoretical justification of the unique solvability and positivity-preserving property is provided, so that a singularity is avoided in the scheme. Such an analysis is based on the subtle fact that the numerical solution is equivalent to a minimization of a convex numerical energy, and the singular nature of the Leonard-Jones potential term around the value of 0 prevents the numerical solution reaching such singular value. The energy stability of the numerical schemes comes from the convex-concave decomposition of the physical energy. In the second order scheme, an alternate convex-concave decomposition is formulated so that the concave part corresponds to a quadratic energy. In turn, the BDF temporal stencil is applied, the combined Leonard-Jones potential term is treated implicitly, the concave expansive part the is approximated by a second order Adams-Bashforth explicit extrapolation, and an artificial Douglas-Dupont regularization term is added to ensure the energy stability. The unique solvability and the positivity-preserving property for the second order scheme could be similarly established, in which the convexity of the $\phi^{-8}$ term has played an important role. The preconditioned steepest descent (PSD) iteration solver has been applied to implement the nonlinear numerical schemes, due to the fact the implicit part turns out to be the gradient of a strictly convex energy functional. A few numerical results are also presented in this article, which demonstrates the robustness of the proposed numerical scheme.

	\section*{Acknowledgements}
This work is supported in part by the National Science Foundation (USA) grants NSF DMS-2012669 (C.~Wang), and NSF DMS-1719854, DMS-2012634
(S.~Wise), the National Natural Science Foundation of China (NSFC) 11871105, 11571045 and Science Challenge Project TZ2018002 (Z. Zhang).

	\bibliographystyle{plain}
	\bibliography{revision1}

\end{document}